\newcommand{\abs}[1]{\vert #1 \vert}
\newcommand{\Bigabs}[1]{\Bigl\vert #1 \Bigr\vert}
\newcommand{\norm}[1]{\left\Vert #1 \right\Vert}
\newcommand{\Z}{\mathbb{Z}}
\newcommand{\R}{\mathbb{R}}
\newcommand{\angles}[1]{\langle #1 \rangle}
\DeclareMathOperator{\sgn}{sgn}
\DeclareMathOperator{\supp}{supp}
\newtheorem{theorem}{Theorem}
\newtheorem{lemma}{Lemma}
\newtheorem{corollary}{Corollary}
\theoremstyle{definition}
\theoremstyle{remark}
\newtheorem{remark}{Remark}
\title
[
	Well-posedness of Whitham--Boussinesq type system
]
{
	Well-posedness for a dispersive system
	of the Whitham--Boussinesq type
}
\author{ E. Dinvay, S. Selberg and A. Tesfahun}
\email{Evgueni.Dinvay@uib.no, Sigmund.Selberg@uib.no, achenef@gmail.com}
\address{Department of Mathematics\\
University of Bergen\\
PO Box 7803\\
5020 Bergen\\ Norway}
\subjclass[2010]{35Q55}
\begin{document}

\begin{abstract} 
We regard the Cauchy problem for
a particular Whitham--Boussinesq system
modelling surface waves
of an inviscid incompressible fluid layer.
We are interested in well-posedness
at a very low level of regularity.
We derive dispersive and Strichartz estimates,
and implement them together
with a fixed point argument to solve the problem
locally.
Hamiltonian conservation guarantees global
well-posedness for small initial data
in the one dimensional settings.
\end{abstract}

\maketitle

\section{Introduction}
\setcounter{equation}{0}

We consider the following Whitham-type system posed on ${\R^{1+1}}$
\begin{equation}
\label{wt1}
\left\{
\begin{aligned}
  \partial_t \eta +  \partial_x v & = -K_1^2 \partial_x (\eta v)
  \\
  \partial_t v +  K_1^2\partial_x \eta  &=   - K_1^2 \partial_x( v ^2/2),
\end{aligned}
\right.
\end{equation} 
where 
\begin{equation}
\label{K_definition}
	K_1:=K_1(D)=\sqrt{ \tanh(D)/ D} \quad \text{with}  \ D=-i\partial_x
	.
\end{equation}
The operator $K_1$ is a Fourier multiplier operator 
with the symbol $\xi \mapsto \sqrt{ \tanh \xi / \xi}$.
It is bounded and invertible in
$L^2(\mathbb R)$,
more precisely,
it is a linear isomorphism from
$L^2(\mathbb R)$ to $H^{1/2} (\mathbb R)$.
Its inverse $K_1^{-1}$ is equivalent to the Bessel potential $J^{1/2}$
defined by the symbol
$\xi \mapsto \left( 1 + \xi^2 \right) ^{1 / 4}$.
Functions $\eta$, $v$ are assumed to be real valued.
Note that
$K_1^2 \partial_x = i \tanh D$
and so System \eqref{wt1} has a semilinear nature.

We complement \eqref{wt1} with the initial data 
\begin{equation}
\label{data1}
\eta(0)= \eta_0 \in H^s(\R)
, \qquad
v(0)= v_0 \in H^{s+1/2}(\R)
,
\end{equation} 
where
$H^s = \left(1-\partial_x^2 \right)^{-s/2} L^2(\R)$ is the standard notation for the Sobolev space of order $s$.
Such initial value problem
describes evolution with time of surface waves of
a liquid layer.
The model approximates the two-dimensional water wave problem
for an inviscid incompressible potential flow.
The variables $\eta$ and $v$ denote the surface elevation
and fluid velocity, respectively.
For some discussion on its precise physical meaning
we refer the reader
to the work by Dinvay, Dutykh and Kalisch
\cite{Dinvay_Dutykh_Kalisch},
where the system \eqref{wt1} appeared for the first time.
Formally, $v$ equals $i \tanh D$-derivative of
the velocity potential trace on surface
associated with the irrotational velocity field.
In the long wave Boussinesq regime
$v$ coincides with the horizontal
fluid velocity at the surface.

The system \eqref{wt1} possesses a Hamiltonian structure
\cite{Dinvay_Dutykh_Kalisch}.
To our knowledge, there are at least two conserved quantities
associated with this system.
The first one
\begin{equation}
\label{Hamiltonian1}
	\mathcal H(\eta, v)  = \frac 12 \int_\R
	\left(	
		\eta^2 + v K_1^{-2} v
		+ \eta v^2
	\right)
	dx
\end{equation}
has the meaning of total energy.
The second one
\begin{equation*}
	\mathcal I(\eta, v)  = \int_\R
	\eta K_1^{-2} v dx
\end{equation*}
has the meaning of momentum.
The system \eqref{wt1}
has a Hamiltonian structure
of the form
\[
	\partial_t (\eta, v)^T = \mathcal J \nabla \mathcal H(\eta, v)
\]
with the skew-adjoint matrix
\[
	\mathcal J
	=
	\begin{pmatrix}
		0 & - i \tanh D
		\\
		- i \tanh D & 0
	\end{pmatrix}
	,
\]
which in particular
guarantees conservation of the energy functional $\mathcal H$. It is worth to notice that System \eqref{wt1} can be derived
at least formally
in the long wave asymptotic regime
from the Zakharov-Craig-Sulem formulation
of the water wave problem
\cite{Lannes} also known to be Hamiltonian.
The Hamiltonian structure of the Zakharov-Craig-Sulem formulation
is canonical,
in the sense that the corresponding skew-adjoint matrix
\(
	\mathcal J
	=
	\begin{pmatrix}
		0 & 1
		\\
		-1 & 0
	\end{pmatrix}
	.
\)
It is interesting to notice that
Model \eqref{wt1} also enjoys a canonical
Hamiltonian structure,
which is directly comparable with the one
of the full water wave system,
when using variables $(\eta, \psi)$
where $\psi$ is such that
$v = i \tanh D \psi$.
Numerical simulations done in \cite{Dinvay_Dutykh_Kalisch}
show how insignificantly values of functional
$\mathcal H$ differ from the corresponding
energy levels of the full water problem.

We also consider a system posed on ${\R^{2+1}}$ of
the following Whitham-Boussinesq type
\begin{equation}
\label{wt2d}
\left\{
\begin{aligned}
  	\partial_t \eta + \nabla \cdot \mathbf v
  	& =
	- K_2^2 \nabla \cdot (\eta  \mathbf v)
	,
  	\\
  	\partial_t  \mathbf v + K_2^2 \nabla \eta
 	&=
 	- K_2^2 \nabla \left( | \mathbf v|^2/2 \right),
\end{aligned}
\right.
\end{equation} 
where $ \mathbf v= (v_1, v_2)\in \R^2$ is a curl free vector field, i.e., $
 	\nabla \times  \mathbf v   = 0$, and 
$$
	K_2:=K_2(D)=\sqrt{ \tanh|D| / |D|} \quad ( D=-i \nabla)
$$
with the corresponding symbol
$
	K_2(\xi)
	=
	\sqrt{
		\tanh (|\xi|) / |\xi|
	}
	.
$
We complement \eqref{wt2d} with the initial data 
\begin{equation}
\label{data2d}
\eta(0)= \eta_0 \in H^s \left( \R^2 \right)
, \qquad
\mathbf v(0)= \mathbf v_0
\in \left[ H^{s+1/2} \left( \R^2 \right) \right] ^2
.
\end{equation} 
This is a two dimensional analogue of System \eqref{wt1}
describing evolution with time of surface waves of
a liquid layer in the three dimensional physical space.
As above the variables $\eta$ and $\mathbf v$
denote the surface elevation
and the fluid velocity, respectively.
The system enjoys the Hamiltonian structure
\[
	\partial_t (\eta,  \mathbf  v)^T = \mathcal J \nabla \mathcal H(\eta ,  \mathbf v)
\]
with the skew-adjoint matrix
\[
	\mathcal J
	=
	\begin{pmatrix}
		0 & - K_2^2 \partial_{x_1} & - K_2^2 \partial_{x_2}
		\\
		- K_2^2 \partial_{x_1} & 0 & 0
		\\
		- K_2^2 \partial_{x_2} & 0 & 0
	\end{pmatrix}
	,
\]
which in particular
guarantees conservation of the energy functional
\begin{equation}
\label{Hamiltonian2}
	\mathcal H(\eta, \mathbf v)  = \frac 12 \int_{\R^2}
	\left(	
		\eta^2 + \left| K_2^{-1} \mathbf v \right|^2
		+ \eta |\mathbf v |^2
	\right)
	dx
	.
\end{equation}

Equations \eqref{wt1} were firstly proposed and studied numerically
in \cite{Dinvay_Dutykh_Kalisch}.
Later in \cite{Dinvay} the first proof of local
well-posedness
based on an energy method
and a compactness argument was given.
System \eqref{wt1} is an alternative to other
weakly nonlinear dispersive models
describing two-wave propagation \cite{Dinvay_Dutykh_Kalisch}.
Those models  are in a good agreement with
experiments \cite{Carter}.
They also have
many peculiarities of
the full water wave problem.
The existing results on well-posedness theory,
however, are not completely satisfactory.
To our knowledge, apart from the model under consideration,
there is only one local well-posedness result so far for the regarded system in \cite{Dinvay_Dutykh_Kalisch} that have been proved
by Pei and Wang \cite{Pei_Wang}.
To achieve this the authors
imposed an additional non-physical condition
$\eta \geqslant C > 0$.
The initial value problem regarded in \cite{Pei_Wang}
is probably ill-posed for large data
if one removes the positivity  assumption $\eta > 0$, as
an heuristic argument given in \cite{Klein_Linares_Pilod} shows.
Recently, Kalisch and Pilod \cite{Kalisch_Pilod}
have proved local well posedness
for a surface tension regularisation of the system
from \cite{Pei_Wang}.
They were able to exclude the positivity assumption $\eta > 0$.
However, the maximal time of existence for their
regularisation is bounded by the capillary parameter.
One does not need any regularisation or special
non-physical conditions to claim the well posedness
for \eqref{wt1}, \eqref{data1}.

In fact \eqref{wt1} can be regarded itself
as a regularization of the
system introduced by Hur and Pandey \cite{Hur_Pandey}.
The latter was also investigated numerically
in \cite{Dinvay_Dutykh_Kalisch} and compared
with other models of Whitham-Boussinesq type.
Admitting formally
$\tanh D \thicksim D$ for small frequencies and
substituting $D$ instead of $\tanh D$ to the nonlinear
part of Equations \eqref{wt1}, one comes
to the system regarded in \cite{Hur_Pandey}.
Hur and Pandey have proved
the Benjamin--Feir instability \cite{Hur_Pandey}
of periodic travelling waves for their system,
which makes it valuable.
If one in addition formally discards the term $\eta \partial_x u$
in the system given in \cite{Hur_Pandey},
then a new alternative system turns out to be locally well-posed
and features wave breaking \cite{Hur_Tao}.
However, the latter does not belong to
the class of Boussinesq--Whitham models
since nonlinear non-dispersive terms have been neglected.

We would like to pay special attention to
a system that was not considered in \cite{Dinvay_Dutykh_Kalisch}
but was introduced by
Duch\^ene, Israwi and Talhouk \cite{Duchene_Israwi}.
They modified the bi-layer Green-Naghdi model
improving the frequency dispersion.
In fact, their system is also linearly fully dispersive,
which makes it a close relative to System \eqref{wt1}.
Note that their system is Hamiltonian as well.
Moreover, they have justified the Green-Naghdi modification
proving well-posedness, consistency and convergence
to the full water wave problem in the
Boussinesq regime \cite{Duchene_Israwi}.
In addition, consistency of Hamiltonian structure
is shown, so that energy levels of
the approximate model can be compared with
the full water energy.
Existence of solitary waves for their system
is also proved in \cite{Duchene_Nilsson_Wahlen}.
Returning to the system regarded
by Pei and Wang \cite{Pei_Wang},
we should notice that a question of
existence of solitary waves for it,
is closed as well \cite{Nilsson_Wang}.
Finally, we point out that well-posedness of
the modified Green-Naghdi model is satisfactory,
in the sense
that it needs neither surface tension nor
any non-physical initial condition.
All this together makes it a promising system.
And indeed, as noticed in \cite{Duchene_Israwi},
their modification gives more reliable
results when it comes to
large-frequency Kelvin-Helmholtz instabilities
than other models of the Green-Naghdi type.

On the contrary,
System \eqref{wt1} has a couple of advantages
compared with the modified Green-Naghdi
model \cite{Duchene_Israwi}.
Firstly,
it is derived, though not rigorously,
from the Zakharov-Craig-Sulem formulation,
and as a result
one knows the relation between variables
$(\eta, v)$ and those describing
the full potential fluid flow \cite{Dinvay_Dutykh_Kalisch}.
As to the modification discussed, it is presented
in variables where the first one has the meaning
of the surface elevation and so coincides with $\eta$.
Its dual variable is called the layer-averaged horizontal velocity
\cite{Duchene_Israwi}.
In the Boussinesq regime it definitely coincides with
the same object associated with the full Euler equations.
However, one cannot guarantee that it will be the case
in shorter wave regimes.
Whereas for Whitham type models one might anticipate
a good agreement which is confirmed by experiments
\cite{Carter}.
Here we must admit that neither
the Whitham-Boussinesq system \eqref{wt1}
nor the modified Green-Naghdi system
are tested by Carter \cite{Carter}.
So it might be only a matter of time
before the modified Green-Naghdi velocity is
given an exact physical meaning.
In other words, we expect that this velocity
will be associated with the full water problem notions.
The second issue is that it does not seem obvious
how the modified Green-Naghdi system can be generalized 
to a three-dimensional model,
whereas for System \eqref{wt1} it is straightforward.

Let us formulate the main results.
The first one is an improvement of the
local existence claimed in \cite{Dinvay}.

\begin{theorem}
[Local existence in 1d]
\label{simple_theorem}
	Let $s > -1/10$.
	Given any $R > 0$ there exists a time $T = T(R) > 0$ such that for any initial data
	$(\eta_0, v_0) \in X^s := H^s(\mathbb{R}) \times H^{s + 1/2}(\mathbb{R})$
	with norm
	\(
		\lVert \eta_0 \rVert _{H^s}
		+ \lVert v_0 \rVert _{H^{s + 1/2}} \le R,
	\)
	there exists a solution $( \eta, v )$ in the space
	\(
		 X^s_T := C([0, T];
		H^s(\mathbb{R}) \times H^{s + 1/2}(\mathbb{R}) )
	\)
	of the Cauchy problem \eqref{wt1}, \eqref{data1}.
	Moreover, the solution is unique in a subspace of $X^s_T$ and it depends continuously on the initial data.
\end{theorem}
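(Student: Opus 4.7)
The plan is to diagonalise the linear part of \eqref{wt1} and then solve the resulting Duhamel equation by a contraction-mapping argument based on Strichartz estimates. Setting $u_\pm := \eta \pm K_1^{-1} v$ and $\omega(\xi) := \sgn(\xi) \sqrt{\xi \tanh \xi}$, a direct Fourier computation shows that $(\eta, v)$ solves \eqref{wt1} if and only if $u_\pm$ satisfy
\[
	(\partial_t \pm i \omega(D))\, u_\pm = - K_1^2 \partial_x(\eta v) \mp \tfrac{1}{2}\, K_1 \partial_x(v^2).
\]
The map $(\eta, v) \mapsto (u_+, u_-)$ is an isomorphism from $X^s$ onto $H^s \times H^s$, since $K_1^{-1}$ is equivalent to $J^{1/2}$; in particular, the regularity of $(\eta, v) \in X^s$ is exactly that of $u_\pm \in H^s$. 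Writing $S_\pm(t) := e^{\mp i t \omega(D)}$ for the associated unitary groups, I would seek a fixed point of the Duhamel formulation
\[
	u_\pm(t) = S_\pm(t) u_\pm(0) + \int_0^t S_\pm(t - \tau)\, N_\pm(u_+, u_-)(\tau)\, d\tau
\]
in a resolution space $Y^s_T := C([0, T]; H^s) \cap \mathcal S^s_T$, where $\mathcal S^s_T$ collects Strichartz-type norms adapted to the phase $\omega$.

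The dispersive estimates needed come from stationary phase applied to the kernel of $S_\pm(t) P_N$, with $P_N$ the Littlewood--Paley projector onto $|\xi| \sim N$. Since $\omega(\xi) \sim |\xi|^{1/2}$ for $|\xi| \gg 1$, one has $|\omega''(\xi)| \sim N^{-3/2}$ on the dyadic band, yielding
\[
	\|S_\pm(t) P_N f\|_{L^\infty_x} \lesssim N^{3/4} |t|^{-1/2}\, \|P_N f\|_{L^1_x},
\]
with the low-frequency regime posing no difficulty. Interpolation with the $L^2$ conservation and the $TT^*$ machinery of Keel--Tao then produce the associated family of frequency-localised Strichartz estimates
$\|S_\pm(t) P_N f\|_{L^q_t L^r_x} \lesssim N^{\sigma(q,r)} \|P_N f\|_{L^2_x}$, with an explicit loss $\sigma(q,r)$ determined by the fractional-Schrödinger-type scaling $\alpha = 1/2$ in one dimension.

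The heart of the proof is a bilinear estimate of the form
\[
	\Bigl\| \int_0^t S_\pm(t-\tau)\, N_\pm(\tau)\, d\tau \Bigr\|_{Y^s_T} \le C\, T^{\theta}\, \|u\|_{Y^s_T}^2 \qquad \text{for some } \theta > 0.
\]
The structural gains to be exploited are that $K_1^2 \partial_x$ has the bounded symbol $i \tanh \xi$ of order $0$ and that $K_1 \partial_x$ has symbol $i \omega(\xi)$ of order $1/2$ at high frequencies; together these render the system semilinear, as noted after \eqref{K_definition}. Littlewood--Paley-decomposing the products $\eta v$ and $v^2$, each dyadic piece is controlled by combining the Strichartz estimates above with Hölder and Bernstein inequalities; the worst case is the high-high interaction, and its balance against the order-zero smoothing and the $1/2$-dispersive smoothing of $\omega$ produces exactly the threshold $s > -1/10$. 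This sharp exponent is the main obstacle, and requires careful case analysis of the three dyadic frequencies together with an optimal choice of Strichartz pair. Once this bilinear estimate and its Lipschitz version are in hand, standard Picard iteration on a ball of radius proportional to $R$ in $Y^s_T$ is a contraction for $T = T(R)$ small enough, yielding existence, uniqueness in $Y^s_T \subset X^s_T$, and continuous dependence on the data.
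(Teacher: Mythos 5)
Your plan follows the paper's strategy almost step for step: the diagonalization $u_\pm=\eta\pm K_1^{-1}v$ with phase $\omega=m_1$ is (up to a harmless factor $2$) exactly the change of variables of Section 2 leading to \eqref{wt2}; the localized decay bound $\|S_\pm(t)P_Nf\|_{L^\infty_x}\lesssim N^{3/4}|t|^{-1/2}\|P_Nf\|_{L^1_x}$ is Lemma \ref{lm-dispest} with $d=1$; and the frequency-localized Strichartz family with loss $N^{(3/8)(1-2/r)}$ is Lemma \ref{lm-LocStr}. The one genuine methodological difference is the resolution space: you propose contracting in a Strichartz-type space $Y^s_T=C([0,T];H^s)\cap\mathcal S^s_T$, whereas the paper contracts in the Bourgain space $X^{s,b}_\pm(T)$, proves the bilinear bounds (Lemma \ref{lm-bilest}) by duality plus the transfer principle, and gains the small factor $T^{1-b}$ from \eqref{DeltaFactor}. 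Your route is viable in principle, because the paper's bilinear proof only uses Strichartz-type consequences of the $X^{0,b}$ norm; but it obliges you to build the $\ell^2$-summed, frequency-localized Strichartz structure into $\mathcal S^s_T$, to handle the retarded Duhamel integral in non-admissible mixed norms (Christ--Kiselev or a substitute), and to identify an alternative source of the factor $T^\theta$; none of these points is addressed in the proposal.

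More importantly, the decisive step is asserted rather than proved: the claim that the high-high interaction ``produces exactly the threshold $s>-1/10$'' is precisely the content of the theorem beyond the elementary range, and no numerology is exhibited to support it. In the paper this comes from the case $\lambda\lesssim\lambda_1\sim\lambda_2$ of the dyadic trilinear sum: one factor is placed in $L^q_tL^r_x$ with loss $\lambda_1^{(3/8)(1-2/r)}$, the other in $L^{q_1}_tL^2_x$, the product is measured in $L^2_TL^p_x$ with $1/p=1/2+1/r$, Bernstein costs $\lambda^{1/p-1/2}$ on the output, and the smoothing of $K$ contributes $\lambda_2^{-1/2}$; summability then requires $s>(2/r-1)/8$ and $s>-(1+6/r)/16$, and optimizing at $r=10$ (i.e. a bilinear $L^2_tL^{5/3}_x$ estimate) yields $-1/10$. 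Without this computation (or an equivalent one in your Strichartz framework), your argument is complete only in the range where crude product estimates close the iteration, essentially $s>0$ as in Lemma \ref{particular_case_lemma}, and the stated threshold $s>-1/10$ remains unproved.
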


\begin{theorem}
[Local existence in 2d]
\label{simple_theorem2d}
	Let $s > 1/4$.
	Given any $R > 0$ there exists a time $T = T(R) > 0$ such that for any initial data
	\(
		(\eta_0, \mathbf v_0) \in X^s := H^s \left( \mathbb R^2 \right)
		\times
		\left( H^{s + 1/2} \left( \mathbb R^2 \right) \right) ^2
	\)
	with $\nabla \times \mathbf v_0 = 0$ and with norm
	\(
		\lVert \eta_0 \rVert _{H^s}
		+ \lVert \mathbf v_0 \rVert _{ (H^{s + 1/2})^2 } \le R,
	\)
	there exists a solution $( \eta, \mathbf v )$ in the space
	\(
		X^s_T := C \left([0, T];
		H^s(\mathbb{R}^2)
		\times
		\left( H^{s + 1/2} \left( \mathbb R^2 \right) \right) ^2
		\right)
	\)
	of the Cauchy problem \eqref{wt2d}, \eqref{data2d}.
	Moreover, the solution is unique in a subspace of $X^s_T$ and it depends continuously on the initial data.
\end{theorem}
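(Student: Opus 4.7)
The plan is to follow the same strategy as for Theorem~\ref{simple_theorem}, adapted to two space dimensions. \textbf{Step 1 (scalar reduction).} The curl-free constraint $\nabla \times \mathbf v = 0$ is preserved by the flow, since both $\nabla(|\mathbf v|^2/2)$ and $K_2^2 \nabla \eta$ are gradients (the latter because $K_2^2$ is a radial Fourier multiplier and therefore commutes with $\nabla$). One may thus set $\mathbf v = \mathcal R\, w$ with a single scalar unknown $w$, where $\mathcal R = -|D|^{-1}\nabla$ is the vector of Riesz transforms; since $\mathcal R$ is an $L^2$-isometry and bounded on every $H^\sigma$, one has $\|\mathbf v\|_{(H^\sigma)^2} \simeq \|w\|_{H^\sigma}$. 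Rewriting \eqref{wt2d} in terms of $(\eta, w)$ produces a scalar system whose linearisation has eigenvalues $\pm i\omega(D)$ with symbol $\omega(\xi) = \sqrt{|\xi|\tanh|\xi|}$, and a bounded invertible change of variables diagonalises the linear part, producing two scalar equations $i\partial_t u_\pm \pm \omega(D) u_\pm = F_\pm(u_+,u_-)$, with $F_\pm$ a linear combination of $K_2^2 \nabla\cdot(\eta \mathbf v)$ and $K_2^2 \nabla(|\mathbf v|^2)$ re-expressed in $u_\pm$.

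\textbf{Step 2 (Duhamel and linear estimates).} Pass to the Duhamel formulation
\[
u_\pm(t) = e^{\mp it\omega(D)}u_{\pm,0} + \int_0^t e^{\mp i(t-t')\omega(D)} F_\pm(t')\,\d t',
\]
and seek a solution in a resolution space $Y^s_T$ whose norm combines the energy control $\|\cdot\|_{L^\infty_T H^s}$ with suitable Strichartz norms $\|\cdot\|_{L^p_T L^q_x}$ associated to the propagator $e^{\mp it\omega(D)}$. In two dimensions the dispersive behaviour is genuinely different in the two frequency regimes $|\xi| \lesssim 1$, where $\omega(\xi) \sim |\xi|$ and one essentially has a transport equation, and $|\xi| \gg 1$, where $\omega(\xi) \sim \sqrt{|\xi|}$ and one recovers gravity-wave dispersion. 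These two contributions are separated via a Littlewood--Paley decomposition; the dispersive and Strichartz inequalities themselves are the content of the dedicated linear section of the paper and are used here as a black box.

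\textbf{Step 3 (nonlinear estimate and contraction).} The symbol of $K_2^2 \nabla$ is the bounded vector $i\xi\tanh|\xi|/|\xi|$, so no derivative is lost in $F_\pm$; what remains is to estimate the products $\eta\mathbf v$ in $H^s$ and $|\mathbf v|^2$ in $H^{s+1/2}$. A Kato--Ponce type fractional Leibniz rule combined with Sobolev embedding reduces these to mixed space-time bilinear estimates, which close using the Strichartz components of the $Y^s_T$-norm. The condition $s > 1/4$ is precisely what is required in $\mathbb R^2$ so that the combined product and Strichartz bounds absorb the loss and yield a short-time factor $T^\theta$, $\theta > 0$, in front of the nonlinear term. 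Once this estimate is in place, a standard Picard iteration in a small ball of $Y^s_T$ produces a unique fixed point for $T = T(R) > 0$ small enough, and the usual difference argument gives uniqueness in a suitable subspace of $X^s_T$ and continuous dependence on the data. The main obstacle is the bilinear estimate in Step~3: we are well below the algebra index $s = 1$ in two dimensions, and the bound only closes by using the full strength of the two-dimensional Strichartz estimates for $\omega$ to supply the missing integrability.
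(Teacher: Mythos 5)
Your Steps 1--2 match the paper's strategy (diagonalisation into $u^\pm$ with dispersion $m_2(\xi)=\sqrt{|\xi|\tanh|\xi|}$, Duhamel, frequency-localised dispersive/Strichartz bounds), but Step 3 --- which is the entire content of the theorem at this regularity --- is asserted rather than proved, and as stated it rests on a framework that is not known to close. Because the decay estimate of Lemma \ref{lm-dispest} carries a dyadic loss $\lambda^{3d/4}$ and there is no uniform $L^1\to L^\infty$ decay for the full propagator, there are \emph{no global-in-frequency Strichartz estimates} here; the paper stresses exactly this point, namely that the ``standard strategy based on Strichartz estimates is unavailable.'' So a resolution space of the form $L^\infty_T H^s\cap L^p_T L^q_x$ together with a Kato--Ponce product rule does not reduce the nonlinearity to black-box Strichartz bounds: the localised estimates of Lemma \ref{lm-LocStr} come with a factor $\lambda^{(3d/8)(1-2/r)}$ on each Littlewood--Paley piece, and the whole difficulty is to sum these losses over the three frequency interactions $\lambda\lesssim\lambda_1\sim\lambda_2$, $\lambda_1\ll\lambda_2\sim\lambda$, $\lambda_2\ll\lambda_1\sim\lambda$ without destroying the threshold. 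The paper does this by transferring the localised Strichartz bounds into Bourgain spaces $X^{s,b}_\pm$ (estimate \eqref{Str1d-Transfer}), proving the bilinear estimates of Lemma \ref{lm-bilest} by duality, dyadic decomposition and Cauchy--Schwarz in the dyadic parameters, and extracting the small time factor from \eqref{DeltaFactor}; the exponent $1/4$ in 2d is not a generic Strichartz number but emerges from the precise numerology $s_d>3d/8-1/2$ (and the balancing of $\min(\lambda_1,\lambda_2)^{3d/8}$ against the weights $\lambda_1^{-s}\lambda_2^{-1/2-s}$). Your claim that ``$s>1/4$ is precisely what is required so that the combined product and Strichartz bounds absorb the loss'' is exactly the statement that needs a proof, and nothing in your proposal supplies it.

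If you want to keep a Strichartz-type space rather than $X^{s,b}$, you would at minimum have to build a Besov-in-frequency structure into the norm (so that the $\lambda$-dependent constants in Lemma \ref{lm-LocStr} can be tracked piece by piece) and then redo the trichotomy summation yourself; this may be possible, but it is a substantive argument, not a reduction to known estimates. Your Step 1 is fine and agrees with the paper: propagation of the curl-free condition is automatic once $\mathbf v$ is reconstructed as $\mathbf v=-i|D|^{-1}K_2\nabla(u^+-u^-)$, and the boundedness of the symbol of $K_2^2\nabla$ is correctly used (it is also why the problem is semilinear). The gap is entirely in the bilinear estimate of Step 3.
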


\begin{remark}
	For $s > 0$ in 1d and $s > 1/2$ in 2d
	the solution is unique in the whole space $X^s_T$.
	Moreover, the flow map is real analytic for such values of $s$.
\end{remark}

Theorem \ref{simple_theorem}
does not rely on the non-cavitation hypothesis
$1 + \eta > 0$, since smallness of waves is implied in the model.
It can be seen as a drawback comparing with
the model from \cite{Duchene_Israwi}.
However, as mentioned above, it is difficult
to say for now which one of these two competing models
is a better approximation to the Euler equations.
Instead of the non-cavitation, there is another condition
that we have to impose to prove the following global result.
The meaning of this new condition is that the total
energy should be positive and not too big.
We point out that this condition is imposed
at the energy level of regularity
and is independent on the regularity $s$ of the initial data.

\begin{theorem}
[Global existence in 1d]
\label{mainthm}
	Assume that $s\geqslant 0$ and consider the local solution from Theorem \ref{simple_theorem}.
	There exists $\delta>0$ such that if
	$$
		\| \eta_0\|_{L^2(\R)}+ \| v_0\|_{H^{1/2}(\R)}
		\leqslant \delta
	$$
	then the solution extends to a global-in-time solution 
	$$
		(\eta, v) \in C\left( \R; H^s(\R) \times H^{s+1/2}(\R)\right).
	$$
\end{theorem}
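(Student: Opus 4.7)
The plan is to combine conservation of the Hamiltonian $\mathcal{H}$ with a coercivity estimate to obtain a time-independent a priori bound on the energy-level quantity
\[
	E(t) := \|\eta(t)\|_{L^2}^2 + \|v(t)\|_{H^{1/2}}^2,
\]
and then to iterate Theorem \ref{simple_theorem} in order to prolong the local solution indefinitely.

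Since $K_1^{-1}$ is equivalent to $J^{1/2}$, the quadratic part of $\mathcal{H}$ is equivalent to $E$. For the cubic term I would use the one-dimensional Sobolev embedding $H^{1/4}(\R) \hookrightarrow L^4(\R)$ to estimate
\[
	\Big| \int_\R \eta v^2 \, dx \Big| \le \|\eta\|_{L^2} \|v\|_{L^4}^2 \le C \|\eta\|_{L^2} \|v\|_{H^{1/2}}^2 \le C\, E(t)^{3/2},
\]
which yields $c_1 E(t) - C E(t)^{3/2} \le 2\mathcal{H}(t) \le c_2 E(t) + C E(t)^{3/2}$. Together with the conservation $\mathcal{H}(t) = \mathcal{H}(0) \le C\delta^2$, a continuity/bootstrap argument then shows that there is $M > 0$ such that $E(t) \le M\delta^2$ on the maximal interval of existence, provided $\delta$ is chosen so small that $M^{1/2}\delta \ll 1$. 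The rigorous use of conservation at the $L^2 \times H^{1/2}$ level can be justified by approximating the initial data by smooth functions, for which $\frac{d}{dt}\mathcal{H} = 0$ is a direct computation from the Hamiltonian formulation, and then passing to the limit through the continuous dependence asserted in Theorem \ref{simple_theorem}.

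With this uniform bound in hand, for $s = 0$ the conclusion is immediate: the local existence time of Theorem \ref{simple_theorem} depends only on the $L^2 \times H^{1/2}$ norm of the data, which stays $\lesssim \delta$, so the solution can be restarted with a fixed time step and extended to all of $\R$. For $s > 0$ one needs in addition a persistence-of-regularity argument, which I would obtain by running the same multilinear and Strichartz estimates that underlie Theorem \ref{simple_theorem} at the higher regularity $s$, and closing the resulting $H^s \times H^{s+1/2}$ differential inequality in terms of the globally bounded quantity $E(t)$ via Kato--Ponce type fractional Leibniz rules. Gronwall then gives at worst exponential growth of the $H^s$-level norm, which prevents finite-time blow-up and permits iterated application of the local theorem up to arbitrary time.

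The main obstacle is the persistence-of-regularity step, because one must commute $J^s$ through the products $\eta v$ and $v^2$ inside the dispersive equations and bound the resulting terms only in terms of the a priori small quantity $E(t)$ (which is preserved under iteration) together with the slowly growing $H^s$-norm. The coercivity estimate is comparatively routine, hinging only on the embedding $H^{1/4}(\R) \hookrightarrow L^4(\R)$ and the equivalence $K_1^{-1} \sim J^{1/2}$.
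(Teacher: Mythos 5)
Your overall strategy (Hamiltonian conservation plus coercivity for a global $X^0=L^2\times H^{1/2}$ bound, then iterate the local theorem, with persistence of regularity handling $s>0$) is the same as the paper's, and the $s=0$ part and the coercivity estimate are fine. The genuine gap is in the persistence-of-regularity step, which you yourself flag as the main obstacle but then dispose of too quickly. When you apply a Kato--Ponce fractional Leibniz rule to $\eta v$ and $v^2/2$, the low-regularity factor unavoidably appears in a norm like $\lVert v\rVert_{L^\infty}$ (or some norm strictly stronger than $H^{1/2}$): you cannot put only $\lVert v\rVert_{H^{1/2}}$ there, and in one dimension $H^{1/2}(\R)$ does \emph{not} embed into $L^\infty(\R)$. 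Hence the differential inequality for $\lVert(\eta,v)\rVert_{X^s}$ cannot be closed ``in terms of the globally bounded quantity $E(t)$'' with a plain Gronwall argument, and the claimed at-worst-exponential growth does not follow; the alternative of using the product estimate $\lVert \eta v\rVert_{H^s}\lesssim \lVert\eta\rVert_{H^s}\lVert v\rVert_{H^{s+1/2}}$ from the local theory only yields a quadratic inequality, whose Gronwall-type bound blows up in finite time. The paper explicitly notes that Strichartz estimates alone do not rescue this.

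The missing idea is the Brezis--Gallouet limiting embedding: since $\lVert v(t)\rVert_{H^{1/2}}$ is globally small/bounded by the $X^0$ bound, one has $\lVert v(t)\rVert_{L^\infty}\lesssim 1+\log\bigl(2+\lVert v(t)\rVert_{H^{s+1/2}}\bigr)$, so the persistence inequality becomes of the form $y(t)\le y_0+C\int_0^t y\log y$, and a logarithmic Gronwall lemma gives a double-exponential (but finite for all time) a priori bound on $\lVert(\eta,v)(t)\rVert_{X^s}$ for $0<s<1/2$. For $s\ge 1/2$ the paper instead bounds the nonlinear terms by $\lVert v\rVert_{H^{s+1/4}}\lVert(\eta,v)\rVert_{X^s}$ and proceeds iteratively in regularity increments of $1/4$, so that $\lVert v\rVert_{H^{s+1/4}}$ is controlled by the already-established bound at the previous level $s'\ge s-1/4$. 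Without the Brezis--Gallouet ingredient (or this iterative structure), your persistence step does not close, so as written the proof of global existence for $s>0$ is incomplete.
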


In the sections below, we first diagonalize
Systems \eqref{wt1} and  \eqref{wt2d}
and reformulate the local theorems in the new variables.
Then we demonstrate how the local result can be obtained
in less general settings applying an elegant classical
PDE technique based on the standard Sobolev embedding.
This also demonstrates the necessity of dispersive estimates
for going down to the energy level of regularity $s = 0$ in 1d.
Note that the domain of the Hamiltonian functional
\eqref{Hamiltonian1} is
\(
	L^2(\R) \times H^{1/2}(\R)
	.
\)
After that we obtain estimates of Strichartz type
studying asymptotic behaviour of a particular
oscillatory integral
(see Lemma \ref{lm-dispest} and its proof below). 
This is an improvement comparing with dispersive
estimates obtained in \cite{Benoit}.
In fact we have $L^{\infty}$-norm decay
dominated by $L^1$-norm locally in frequency,
which gives us localised Strichartz estimates.
Whereas the decay in \cite{Benoit} is
dominated by weighted Sobolev spaces,
though frequency independent.
With the new estimates in hand we can apply
the fixed point argument in a ball
of the Bourgain space associated with
the water wave dispersion.
This gives us the local existence theorems, Theorems
\ref{simple_theorem} and \ref{simple_theorem2d}.

The last step is to prove the global well-posedness
theorem \ref{mainthm}. 
For $s = 0$ it comes straightforwardly from the
energy \eqref{Hamiltonian1} conservation
via the continuity argument and the local result.
For $s > 0$ we prove the persistence of regularity.
Surprisingly, it is not enough just to have the dispersive
Strichartz estimates to claim the persistence.
Thankfully, our velocity variable $v$ is bounded in
$H^{1/2}$-norm and so we are able to use
the following limiting case of the Sobolev embedding theorem.

\begin{lemma}
[Brezis-Gallouet inequality]
\label{Brezis_lemma}
	Suppose $f \in H^s(\mathbb R^d)$ with $s > d / 2$.
	Then
	\begin{equation}
	\label{Brezis_inequality}
		\lVert f \rVert_{L^{\infty}}
		\leqslant
		C_{s, d}
		\left(
			1 + \lVert f \rVert_{H^{d/2}}
			\sqrt{ \log( 2 + \lVert f \rVert_{H^s} ) }
		\right)
		.
	\end{equation}
\end{lemma}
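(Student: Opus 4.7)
The plan is to prove \eqref{Brezis_inequality} by a standard Fourier-splitting argument, writing $f$ as an inverse Fourier transform and cutting the frequency space at a level $N$ that will be optimized at the end.

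First I would use the elementary bound
\[
	\norm{f}_{L^{\infty}}
	\leqslant (2\pi)^{-d} \norm{\widehat{f}}_{L^1}
	= (2\pi)^{-d}
	\left(
		\int_{\abs{\xi} \leqslant N} \bigabs{\widehat{f}(\xi)} \d\xi
		+ \int_{\abs{\xi} > N} \bigabs{\widehat{f}(\xi)} \d\xi
	\right)
\]
for some $N \geqslant 2$ to be chosen. To the low-frequency piece I would apply Cauchy--Schwarz with the weight $\angles{\xi}^{d/2}$, getting
\[
	\int_{\abs{\xi} \leqslant N} \bigabs{\widehat{f}(\xi)} \d\xi
	\leqslant
	\left(
		\int_{\abs{\xi} \leqslant N} \angles{\xi}^{-d} \d\xi
	\right)^{1/2}
	\norm{f}_{H^{d/2}}
	\leqslant C_d \sqrt{\log N} \, \norm{f}_{H^{d/2}}
	,
\]
using that $\int_{\abs{\xi} \leqslant N} \angles{\xi}^{-d} \d\xi \lesssim \log N$ after passage to polar coordinates. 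For the high-frequency piece I would apply Cauchy--Schwarz with the weight $\angles{\xi}^s$, obtaining
\[
	\int_{\abs{\xi} > N} \bigabs{\widehat{f}(\xi)} \d\xi
	\leqslant
	\left(
		\int_{\abs{\xi} > N} \angles{\xi}^{-2s} \d\xi
	\right)^{1/2}
	\norm{f}_{H^s}
	\leqslant C_{s, d} N^{d/2 - s} \norm{f}_{H^s}
	,
\]
which is finite precisely because $s > d/2$.

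Combining these yields
\[
	\norm{f}_{L^{\infty}}
	\leqslant C_{s, d}
	\left(
		\sqrt{\log N} \, \norm{f}_{H^{d/2}}
		+ N^{d/2 - s} \norm{f}_{H^s}
	\right)
\]
for every $N \geqslant 2$. I would then optimize by choosing $N = 2 + \norm{f}_{H^s}^{1 / (s - d/2)}$, so that $N^{d/2 - s} \norm{f}_{H^s}$ is of order one and $\sqrt{\log N}$ is comparable to $\sqrt{\log(2 + \norm{f}_{H^s})}$, which gives \eqref{Brezis_inequality}. If the bracket $\norm{f}_{H^s}$ is small, the choice $N = 2$ together with the continuous embedding $H^s \hookrightarrow L^{\infty}$ (valid for $s > d/2$) handles the trivial case, ensuring the constant $1$ on the right-hand side.

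The only mildly delicate step is the logarithmic bookkeeping in the low-frequency integral: one must be sure that the borderline weight $\angles{\xi}^{-d}$ produces exactly a $\log N$ factor rather than a power of $N$, as this logarithm (only after taking its square root) is precisely what gives the mild growth in $\norm{f}_{H^s}$ on the right-hand side of \eqref{Brezis_inequality}.
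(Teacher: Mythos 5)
Your argument is correct. Note that the paper does not prove Lemma \ref{Brezis_lemma} at all: it quotes the inequality in the formulation used by Ponce and refers to Brezis--Gallouet and Brezis--Wainger for proofs. Your frequency-splitting argument is the standard self-contained proof of exactly this formulation, and all the steps check out: the low-frequency Cauchy--Schwarz with the borderline weight $\angles{\xi}^{-d}$ does give $\int_{\abs{\xi}\leqslant N}\angles{\xi}^{-d}\,d\xi \lesssim \log N$ for $N\geqslant 2$, the high-frequency tail converges precisely because $s>d/2$, and with $N = 2 + \norm{f}_{H^s}^{1/(s-d/2)}$ one has $N^{d/2-s}\norm{f}_{H^s}\leqslant 1$ while $\log N \leqslant C_{s,d}\log\bigl(2+\norm{f}_{H^s}\bigr)$, the exponent $1/(s-d/2)$ being absorbed into the constant. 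The closing remark about falling back on the embedding $H^s\hookrightarrow L^\infty$ when $\norm{f}_{H^s}$ is small is harmless but unnecessary, since the optimized choice of $N$ already covers that case. In short, where the paper buys the lemma by citation, your proof supplies the missing elementary argument, which is the same mechanism underlying the cited references.
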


Inequality \eqref{Brezis_inequality} was firstly put forward
and proved for a domain in $\mathbb R^d$ with $d= 2$
in the work by Brezis, Gallouet
\cite{Brezis_Gallouet}.
It was extended to the other Sobolev spaces in
\cite{Brezis_Wainger}.
An implementation of this inequality for deriving
a global a priori estimate can be found, for example,
in the work by Ponce \cite{Ponce}
on the global well-posedness of the Benjamin-Ono equation.
We apply a similar trick here, and so that we
repeat the formulation of Lemma \ref{Brezis_lemma}
as it is given in \cite{Ponce}.
This provides us with the persistence of regularity
that in turn concludes the proof of Theorem \ref{mainthm}.

Let us finally give some explanations for the choice of strategy,
focusing on the one dimensional case.
The local well-posedness for $s > 0$ follows from the
standard technique related to semilinear equations.
It requires only Duhamel's formula and suitable
product estimates for the right hand side of \eqref{wt1}
in the Sobolev-based space $X^s = H^s \times H^{s + 1/2}$.
Global bound in $X^0$ follows from the Hamiltonian conservation,
since
\(
	\mathcal H (\eta, v) \approx \norm{(\eta, v)}_{X^0}^2
\)
provided $\norm{(\eta, v)}_{X^0}$ is small.
Hence the global well-posedness in $X^s$ with $s > 0$
follows from the local result and
an a priori bound obtained from
the persistence of regularity and the Brezis-Gallouet inequality.

The main focus of the work is on lowering the regularity
threshold  for the local well-posedness
through the use of dispersive estimates.
One anticipates that even the weak dispersive
properties of System \eqref{wt1}
can lower the threshold at least to the limit case $s=0$.
This together with the global bound automatically
gives us the global well-posedness in $X^0$.
However, the weakness of dispersion means
that the time-decaying $L^1 \to L^{\infty}$-boundedness
of the semigroup, associated with the linearised system,
does not hold.
As a result the standard strategy based on
Strichartz estimates is unavailable.
So instead, we obtain the decay estimate on each component
of the dyadic Littlewood-Paley decomposition with a sharp dependence
on the dyadic number.
From this local decay we deduce
bilinear estimates in the Bourgain space associated with
the water wave dispersion relation.
The local well-posedness is deduced from Duhamel's formula
with the help of these bilinear estimates.

The main peculiarity of the two dimensional case
is that with this technique
we are able to prove the local well-posedness
in $X^s = H^s \times H^{s + 1/2} \times H^{s + 1/2}$ only for 
$s > 1/4$.
It still leaves a gap from the energy space $X^0$,
too big to claim global existence.
Moreover, even in 1d it is not clear so far
if the problem is globally well-posed for some
$s \in (-1/10, 0)$.

Another interesting thing one can notice is that
in the two dimensional case
we were able to get the maximal gain of $d/8$ derivatives
with respect to the naive estimate based only on the unitary
property of the semigroup.
This is optimal in view of the known smoothing of
$\exp \left( it|D|^{1/2} \right)$
that is essentially the semigroup under consideration.
We refer to \cite{Ai2019, Alazard_Burq_Zuily2018}
for more details.
It is interesting to notice that in the one dimensional case
we obtained the gain of $1/10$ derivatives
that turns out to be the same for the full water wave problem
\cite{Ai2019}.
The question remains open if one can improve the result
and lower the threshold
from $s > -1/10$ to the optimal $s > -1/8$ in 1d.

\section{
	Diagonalization of \eqref{wt1} and \eqref{wt2d}, and 
	reformulations of the local existence theorems
}
\setcounter{equation}{0}

We diagonalize \eqref{wt1} as follows.
Defining the new variables
\begin{align*}
u^+_1=\frac{K_1\eta +v}{2 K_1},
\qquad 
 u^-_1=\frac{K_1\eta -v}{2 K_1}
\end{align*}
we have
\begin{equation}\label{upm}
\eta=u^+_1+ u^-_1, \quad v=K_1 (u^+_1 - u^-_1).
\end{equation}
Then we can write the equation for $u^\pm_1$ as follows: 
\begin{align*}
2 K_1 \partial_t u^\pm_1
&=K_1\eta_t \pm  v_t
\\
&= -K_1\partial_x v-K_1^3 \partial_x(\eta v) \mp K_1^2\partial_x \eta\mp  K_1^2\partial_x (v^2/2)
\\
&=  \mp i DK_1 (K_1  \eta \pm v)-iDK_1^2 [ K_1(\eta v)\pm v^2/2].
\end{align*}
Thus, 
\begin{equation}\label{upmeq}
i \partial_t u^\pm_1
= \pm  DK_1 u^\pm_1 +\frac{ DK_1}2[ K_1(\eta v)\pm v^2/2].
\end{equation}

The nonlinear terms can also be written in terms of $u^\pm_1$ as
\begin{equation}
\label{nleq}
 \eta v = (u^+_1+u^-_1) K_1 (u^+_1 - u^-_1),
 \qquad 
 v^2= [K_1(u^+_1 - u^-_1)]^2.
\end{equation}
Now let
 $$m_1(D)=DK_1(D).$$
From \eqref{upmeq}--\eqref{nleq} we see that the system \eqref{wt1} transforms to 
\begin{equation}
\label{wt2}
\left\{
\begin{aligned}
 (i\partial_t-m_1(D) ) u^+_1
&=  B^+_1(u^+_1, u^-_1) ,
\\
( i\partial_t +m_1(D)) u^-_1
&= B^-_1(u^+_1, u^-_1) ,
\end{aligned}
\right.
\end{equation}
where
\begin{equation}
\label{wtnonlin}
4B^\pm_1(u^+_1, u^-_1)  =   D K_1\left[2K_1\left\{ (u^+_1+u^-_1) K_1 (u^+_1 - u^-_1)\right\} \pm  [K_1(u^+_1 - u^-_1)]^2\right],
\end{equation}
The initial data \eqref{data1} transforms to 
\begin{equation}
\label{data2}
	u^\pm_1 (0)= f^\pm_1 :=\frac{K_1\eta_0 \pm v_0}{2 K_1} \in H^s(\R),
\end{equation}
where we used the fact that $K_1(\xi)\sim \angles{\xi}^{-1/2}$, and hence 
$$
	\|K_1^{-1}v_0\|_{H^s(\R)}
	\sim
	\| \angles{D}^{1/2}v_0\|_{H^s(\R)}
	=
	\| v_0\|_{H^{s + 1/2}(\R)}.
$$
Here and below we use the notation
$\angles{\xi} = \sqrt{1 + \xi^2}$,
so $\angles{D} = J$ is the Bessel potential of order $-1$.

To diagonalize \eqref{wt2d} we define
\begin{align*}
u^\pm_2=\frac{K_2|D|\eta \mp i\nabla \cdot \mathbf v}{2 K_2|D|}
\end{align*}
Hence 
\begin{equation}\label{upm2d}
\eta=u^+_2+u^-_2 \quad \mathbf v=-i |D|^{-1} K_2\nabla(u^+_2- u^-_2),
\end{equation}
where we used the fact that $\mathbf v $ is curl free which in turn implies 
$\nabla \nabla \cdot \mathbf v= \Delta  \mathbf v=- |D|^2  \mathbf v$.
Then the equations for $u^\pm_2$ are written as follows: 
\begin{align*}
2 K_2|D| \partial_t u^\pm_2
&=K_2|D|\eta_t \mp i \nabla \cdot \mathbf v_t
\\
&=  \mp iK_2 |D| (K_2|D|\eta \mp i\nabla \cdot \mathbf v)+i|D|^2K_2^2 [  K_2  |D|^{-1}(i\nabla) \cdot(\eta \mathbf v)\pm  (|\mathbf v|^2)/2].
\end{align*}
Thus, 
\begin{equation}\label{u+-eq2d}
i \partial_t u^\pm_2
=  \pm  |D| K_2u^\pm_2 -\frac{ |D|  K_2}2 [ iK_2  R \cdot(\eta \mathbf v)\mp |\mathbf v|^2/2)],
\end{equation}
where $R=(R_1, R_2)$ with $R_j= \partial_j/|D|$ being the Riesz transforms.
Now setting
 $$ m_2(D):=|D|K_2(D)$$ and combining the equations
 \eqref{upm2d}--\eqref{u+-eq2d}  we see that the system \eqref{wt2d-1} transforms to 
\begin{equation}
\label{wt2d-1}
\left\{
\begin{aligned}
 (i\partial_t-m_2(D) ) u^+_2
&=  B^+_2(u^+_2, u^-_2) ,
\\
( i\partial_t +m_2(D)) u^-_2
&= B^-_2(u^+_2, u^-_2) ,
\end{aligned}
\right.
\end{equation}
where
\begin{equation}
\label{wtnonlin-2d-1}
4B^\pm_2(u^+_2, u^-_2)  = - |D|K_2\left[2K_2    R \left\{ (u^+_2+u^-_2)  K_2R (u^+_2 - u^-_2)\right\} \mp  \Bigabs{ K_2R (u^+_2 - u^-_2)}^2\right].
\end{equation}

The initial data \eqref{data2d} transforms to 
\begin{equation}
\label{data2d-1}
	u^\pm_2 (0)= f^\pm_2 :=\frac{K_2|D|\eta_0 \mp i\nabla \cdot \mathbf v_0}{2 K_2|D|} \in H^s(\R),
\end{equation}
where we used the fact that $K_2(\xi)\sim \angles{\xi}^{-1/2}$.

Now let us reformulate Theorem \ref{simple_theorem} and Theorem \ref{simple_theorem2d} 
in terms of the new variables as follows.
\begin{theorem}
\label{lwpthm}
	Let $s> -1/10$.
	Given any $R > 0$ there exists a time $T = T(R) > 0$ such that for any initial data $(f^+_1,f^-_1) \in H^s(\R) \times H^s(\R)$ with norm $\| f^+_1\|_{H^s(\R)} + \| f^-_1\|_{H^s(\R)} 
	\le R$, the Cauchy problem \eqref{wt2}--\eqref{data2} has a solution
	\[
		(u^+_1,u^-_1) \in C \left( [0, T]; H^s(\R) \times H^s(\R) \right).
	\]
	Moreover, the solution is unique in a subset of this space and depends continuously on the data.
\end{theorem}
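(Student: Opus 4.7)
The plan is to solve the diagonalized Cauchy problem \eqref{wt2}--\eqref{data2} by Picard iteration in Bourgain-type spaces adapted to the phases $\pm m_1(\xi) = \pm \sqrt{\xi \tanh \xi}$. Define, for $s \in \R$ and $b \in \R$,
\[
    \norm{u}_{X^{s,b}_\pm}^2
    = \int \angles{\xi}^{2s}\angles{\tau \mp m_1(\xi)}^{2b}
     \abs{\widehat{u}(\tau,\xi)}^2 \d\tau\d\xi,
\]
and let $X^{s,b}_{\pm,T}$ denote their restriction to the slab $[0,T]\times\R$. For $b>1/2$ one has the continuous embedding $X^{s,b}_{\pm,T}\hookrightarrow C([0,T];H^s)$, the linear homogeneous estimate $\norm{e^{\mp it m_1(D)} f}_{X^{s,b}_{\pm,T}} \lesssim \norm{f}_{H^s}$, and the inhomogeneous estimate $\norm{\int_0^t e^{\mp i(t-t')m_1(D)} F(t')\d t'}_{X^{s,b}_{\pm,T}} \lesssim T^{\epsilon}\norm{F}_{X^{s,b-1+\epsilon}_{\pm,T}}$ for some small $\epsilon>0$. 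With these at hand, Duhamel's formula reduces local well-posedness to the bilinear estimates
\[
    \norm{B^\pm_1(u^+_1,u^-_1)}_{X^{s,b-1+\epsilon}_{\pm,T}}
    \lesssim
    \prod_{\sigma\in\{+,-\}} \norm{u^\sigma_1}_{X^{s,b}_{\sigma,T}},
\]
for $s>-1/10$, after which a standard fixed-point argument in a ball of $X^{s,b}_{+,T}\times X^{s,b}_{-,T}$ yields existence, uniqueness in that ball, and real-analytic dependence on the data, which then project back to the $C([0,T];H^s\times H^s)$ statement.

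Next I would attack the bilinear estimates by a Littlewood--Paley decomposition, writing $B^\pm_1(u,v) = \sum_{N,N_1,N_2} P_N B^\pm_1(P_{N_1}u, P_{N_2}v)$ and reducing by Plancherel and duality to a weighted convolution estimate in $(\tau,\xi)$. The symbol structure in \eqref{wtnonlin} produces an operator of the schematic form $m_1(D) K_1 (K_1 u \cdot K_1 v)$; since $K_1(\xi)\sim \angles{\xi}^{-1/2}$ and $m_1(\xi)\sim |\xi|$ at high frequency, the derivative loss in the product is roughly one half, and at low frequencies the symbol is even better. The essential input is the dispersive estimate of Lemma \ref{lm-dispest}, which, dyadically localized at frequency $N$, yields sharp $L^1\to L^\infty$ decay for $e^{\mp it m_1(D)}P_N$ in terms of $N$, and hence, by the $TT^*$ argument of Keel--Tao, localized Strichartz estimates
\[
    \norm{P_N e^{\mp it m_1(D)} f}_{L^q_tL^r_x}
    \lesssim N^{\sigma(q,r)} \norm{P_N f}_{L^2},
\]
for admissible pairs $(q,r)$. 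These transfer to $X^{s,b}_\pm$ by the standard transference principle for $b>1/2$.

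The heart of the matter, and the main obstacle, is the bilinear estimate in the high-high$\to$low regime, where two waves at frequencies $N_1\sim N_2\sim N_{\max}\gg N$ interact to produce a low-frequency output. Here one must exploit the resonance function $\Phi^{\pm\pm}(\xi_1,\xi_2) = \pm m_1(\xi_1+\xi_2) \mp m_1(\xi_1) \mp m_1(\xi_2)$, which distributes modulation weight $\angles{\tau-m_1(\xi)}^b$ across the three inputs via the algebraic identity on the support of the convolution. I would split into cases according to which of the three modulations is dominant; when the output modulation dominates one uses Strichartz in the two inputs, when an input modulation dominates one uses an $L^2$-bound on that input combined with Strichartz on the other. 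Summing the resulting dyadic bounds over $N,N_1,N_2$ forces the inequality $s>-1/10$, which saturates the strength of the dispersive estimate and reflects the fact that the gain of $1/10$ derivatives is the best afforded by the Strichartz scheme for this phase. Once the bilinear estimate is established with a factor $T^{\epsilon}$, contraction in a ball of radius $\sim R$ in $X^{s,b}_{+,T}\times X^{s,b}_{-,T}$ closes for $T=T(R)$ sufficiently small, and continuous (indeed analytic) dependence on $(f^+_1,f^-_1)$ follows from the contraction estimate.
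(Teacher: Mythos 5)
Your overall architecture coincides with the paper's: localized dispersive estimate (Lemma \ref{lm-dispest}), localized Strichartz bounds via $TT^*$ and Hardy--Littlewood--Sobolev (Lemma \ref{lm-LocStr}), transfer to $X^{s,b}_\pm$, bilinear estimates for $B^\pm_1$, and then a contraction in a ball of $X^{s,b}_+(T)\times X^{s,b}_-(T)$ with a $T$-power from the inhomogeneous estimate. The divergence, and the genuine gap, is at the bilinear estimate, which is the heart of the theorem and which you only sketch. You propose to run a resonance-function/dominant-modulation case analysis and assert that the resulting dyadic bookkeeping ``forces'' $s>-1/10$, but no computation supports this, and the mechanism is doubtful exactly where it is needed: in the high-high$\to$low regime with opposite-sign inputs ($\lambda\lesssim\lambda_1\sim\lambda_2$), the resonance function is of size comparable to the small output frequency (since $m_1$ is odd and $m_1'(\lambda_2)\sim\lambda_2^{-1/2}$), so all three modulations can be simultaneously small and the identity $\tau-m=(\tau_1\mp m_1)+(\tau_2\mp m_2)+\Phi$ yields no usable gain. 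In that regime the required gain has to come from elsewhere, and your proposal does not identify the source.

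The paper's proof of Lemma \ref{lm-bilest} uses no resonance analysis at all. It first discards the output modulation weight entirely, trading $X^{s,b-1}_\pm(T)$ for $L^2_TH^s_x$ at the price of the factor $T^{1-b}$ via \eqref{DeltaFactor}, then argues by duality, dyadic decomposition, Cauchy--Schwarz and Bernstein, with the only dispersive input being the bilinear $L^2_tL^p_x$ estimate \eqref{L2bilinear} of Lemma \ref{L2lemma} (H\"older plus the localized Strichartz bound applied to the lower-frequency factor). The threshold $-1/10$ then comes out of elementary numerology: the smoothing symbols in the nonlinearity ($K_1\sim\angles{D}^{-1/2}$, $|D|K_1^2$ bounded) supply $\lambda_2^{-1/2-2s}$ in the $I_1(1)$ sum, and optimizing the Lebesgue exponent there (take $r=10$, i.e.\ $p=5/3$, balancing the Bernstein loss $\lambda^{1/p-1/2}$ at the output against the Strichartz gain $\lambda_2^{(3/8)(1-2/r)}$) gives exactly $s>-1/10$ from both constraints simultaneously. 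So to complete your argument you would either have to carry out the modulation analysis in full and handle the near-resonant case by precisely this kind of Strichartz/Bernstein/smoothing computation, or drop the resonance framework and reproduce the paper's simpler scheme; as written, the decisive estimate is asserted rather than proved.
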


\begin{theorem}
\label{lwpthm2d-1}
	Let $s> 1/4$.
	Given any $R > 0$ there exists a time $T = T(R) > 0$ such that for any initial data $(f^+_2,f^-_2) \in H^s(\R^2) \times H^s(\R^2)$ with norm $\| f^+_2\|_{H^s(\R^2)} + \| f^-_2\|_{H^s(\R^2)} 
	\le R$, the Cauchy problem \eqref{wt2d-1}--\eqref{data2d-1} has a solution
	\[
		(u^+_2,u^-_2) \in C \left( [0, T]; H^s(\R^2) \times H^s(\R^2) \right).
	\]
	Moreover, the solution is unique in a subset of this space and depends continuously on the data.
\end{theorem}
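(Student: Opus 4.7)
The plan is to solve the Cauchy problem \eqref{wt2d-1}--\eqref{data2d-1} by a contraction mapping argument in a Bourgain-type space adapted to the two dispersion relations $\pm m_2(\xi)=\pm|\xi|^{1/2}(\tanh|\xi|)^{1/2}$. Writing the system in Duhamel form,
\begin{equation*}
u_2^{\pm}(t)=e^{\mp itm_2(D)}f_2^{\pm}-i\int_0^t e^{\mp i(t-t')m_2(D)}B_2^{\pm}(u_2^+,u_2^-)(t')\,dt',
\end{equation*}
I would seek a fixed point on a ball of $X^{s,b}_{+,T}\times X^{s,b}_{-,T}$, where
\begin{equation*}
\|u\|_{X^{s,b}_{\pm}}=\bigl\|\angles{\xi}^{s}\angles{\tau\pm m_2(\xi)}^{b}\widetilde u(\tau,\xi)\bigr\|_{L^2_{\tau,\xi}(\R\times\R^2)}
\end{equation*}
and $X^{s,b}_{\pm,T}$ denotes the usual restriction to $[0,T]$, with $b$ chosen slightly above $1/2$.

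The classical Bourgain linear theory furnishes $\|e^{\mp itm_2(D)}f\|_{X^{s,b}_{\pm,T}}\lesssim\|f\|_{H^s}$ together with the Duhamel bound
\begin{equation*}
\Bigl\|\int_0^t e^{\mp i(t-t')m_2(D)}F(t')\,dt'\Bigr\|_{X^{s,b}_{\pm,T}}\lesssim T^{\varepsilon}\|F\|_{X^{s,b-1+\varepsilon}_{\pm,T}}
\end{equation*}
for some small $\varepsilon>0$. The core task therefore reduces to proving, for every choice of signs $\sigma_0,\sigma_1,\sigma_2\in\{+,-\}$, the bilinear estimate
\begin{equation*}
\bigl\|B_2^{\sigma_0}(u,v)\bigr\|_{X^{s,b-1+\varepsilon}_{\sigma_0,T}}\lesssim\|u\|_{X^{s,b}_{\sigma_1,T}}\|v\|_{X^{s,b}_{\sigma_2,T}},\qquad s>\tfrac14.
\end{equation*}
Since $K_2(\xi)\sim\angles{\xi}^{-1/2}$ at high frequencies and the Riesz transforms $R_j$ are bounded on every $L^2$-based space, the nonlinear form \eqref{wtnonlin-2d-1} behaves at high frequencies like $|D|^{1/2}$ applied to products of two factors of the type $|D|^{-1/2}u$. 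This is precisely the structure encountered in the gravity water wave problem, and one expects the threshold $s>1/4$ to arise from balancing the $1/2$-derivative loss coming from the outer multiplier against the smoothing supplied by the dispersion.

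The hard part is the bilinear estimate. The strategy is to exploit the dispersive decay from Lemma \ref{lm-dispest}: after Littlewood--Paley decomposition, it yields frequency-localised Strichartz bounds for $e^{\mp itm_2(D)}P_N$ with a gain of $N^{-1/4}$ (that is, $d/8$ derivatives with $d=2$) relative to the trivial $L^2_{t,x}$ bound coming from unitarity. Transferring these to the Bourgain framework via the standard $X^{s,b}\hookrightarrow L^p_tL^q_x$ lifting, I would then establish a frequency-localised bilinear $L^2_{t,x}$ bound of Kenig--Ponce--Vega / Ginibre--Tsutsumi--Velo type,
\begin{equation*}
\bigl\|P_N\bigl(P_{N_1}u\cdot P_{N_2}v\bigr)\bigr\|_{L^2_{t,x}}\lesssim N_{\min}^{1/4}\|u\|_{X^{0,b}_{\sigma_1}}\|v\|_{X^{0,b}_{\sigma_2}},
\end{equation*}
with $N_{\min}=\min(N_1,N_2)$. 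Distributing the outer $|D|^{1/2}$ by a Leibniz rule and organising the sum into high-high-to-low, high-low, and low-high dyadic interactions, the resulting dyadic series converges precisely when $s>1/4$, which is exactly the threshold announced in Theorem \ref{lwpthm2d-1}. High-high interactions, where the outer $|D|^{1/2}$ produces the most loss and no frequency is small enough to absorb the derivative for free, are the most delicate and use the full strength of the $1/4$-derivative dispersive gain.

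Once the linear and bilinear estimates are in hand, the Duhamel map is a contraction on a small ball of $X^{s,b}_{+,T}\times X^{s,b}_{-,T}$ provided $T=T(R)$ is chosen sufficiently small. The embedding $X^{s,b}_{\pm,T}\hookrightarrow C([0,T];H^s(\R^2))$ for $b>1/2$ produces the claimed time regularity, uniqueness holds in the ball within which the contraction takes place, and Lipschitz continuous dependence on the initial data comes for free from the fixed-point construction. The curl-free constraint on $\mathbf v$ is automatically preserved by the flow, since \eqref{upm2d} identifies $\mathbf v=-i|D|^{-1}K_2\nabla(u_2^+-u_2^-)$ as a gradient at each time.
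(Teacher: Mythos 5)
Your overall architecture --- Duhamel formulation, contraction in $X^{s,b}_{\pm}$ with $b>1/2$, frequency-localized dispersive/Strichartz estimates coming from Lemma \ref{lm-dispest}, and a dyadically decomposed bilinear estimate split into high--high, high--low and low--high interactions --- is exactly the route the paper takes (Lemmas \ref{lm-LocStr}, \ref{L2lemma} and \ref{lm-bilest}, then the fixed point in $\mathcal D(R_d)$). However, the central quantitative step as you state it is wrong: the bilinear bound $\norm{P_N(P_{N_1}u\cdot P_{N_2}v)}_{L^2_{t,x}} \lesssim N_{\min}^{1/4}\norm{u}_{X^{0,b}_{\sigma_1}}\norm{v}_{X^{0,b}_{\sigma_2}}$ is false in the high--high regime. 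Take $\widehat f = \chi_{\{|\xi|\sim N\}}$ and $u=v=e^{itm_2(D)}f$ (suitably time-truncated, so the $X^{0,b}$ norm is comparable to $\norm{f}_{L^2}$): the free wave has size $\sim N^2$ on a space-time box of measure $\sim N^{-2}\cdot N^{-1/2}$, so $\norm{uv}_{L^2_{t,x}} \gtrsim N^{11/4} \sim N^{3/4}\norm{f}_{L^2}^2$, whereas your bound predicts $N^{1/4}\norm{f}_{L^2}^2$. What the localized dispersive estimate actually yields, by placing the low-frequency factor in a Strichartz norm and the other in $L^\infty_t L^2_x$, is the exponent $N_{\min}^{3/4}$ (up to $\varepsilon$- and $T$-losses forced in $d=2$ by the forbidden endpoint $L^2_tL^\infty_x$; this is precisely Lemma \ref{L2lemma}). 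The ``$d/8=1/4$ derivative gain'' you invoke is the gain of the localized Strichartz estimate over the trivial unitary bound, not a bilinear $L^2_{t,x}$ gain of $N_{\min}^{1/4}$.

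There is also an internal inconsistency that signals the computation was not carried through: if your claimed estimate were true, then your own bookkeeping (outer $|D|^{1/2}$, two factors of type $|D|^{-1/2}u$) would close for $s$ well below $1/4$, not ``precisely when $s>1/4$''. With the correct exponent $N_{\min}^{3/4}$, the $\angles{D}^{-1/2}$ smoothing of the $K_2$ factor acting on the minimal frequency produces the condition $3/4-1/2-s<0$, i.e. $s>1/4$, which is exactly how the paper's Lemma \ref{lm-bilest} closes the sums $I_1(2)$, $I_2(2)$, $I_3(2)$. So your plan is salvageable and coincides with the paper's once the bilinear lemma is restated with the $3/4$ exponent and the 2d endpoint losses are handled as in Lemma \ref{L2lemma}; as written, the key step fails.
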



The system \eqref{wt2}--\eqref{data2} 
can be written in the form of integral equations as 
\begin{equation}
\label{inteqwt1}
u^\pm_1(t) = e^{\mp it m_1(D)}   f^\pm_1 \mp i  \int_0^t  e^{\mp i(t-s) m_1(D)}  B^\pm_1(u^+_1, u^-_1)(s) \, ds.
\end{equation}
Similarly, 
the system  \eqref{wt2d-1}--\eqref{data2d-1}
can be written in the form of integral equations as 
  \begin{equation}
\label{inteqwt2d-1}
u^\pm_2(t) = e^{\mp it m_2(D)}  f^\pm_2\mp i  \int_0^t  e^{\mp i(t-s) m_2(D)}  B^\pm_2(u^+_2, u^-_2)(s) \, ds.
\end{equation}

Applying the contraction argument to \eqref{inteqwt1} together with
the Sobolev embedding one can prove Theorem
\ref{lwpthm} for $s > 0$ and Theorem \ref{lwpthm2d-1} for $s>1/2$, as shown in the next section.
However, to prove Theorem
\ref{lwpthm} for $s>-1/10$ and Theorem \ref{lwpthm2d-1} for $s>1/4$ we need to derive dispersive estimates
on the semigroups $S_{m_d}(\pm t):=e^{ \mp it m_d(D)} $, where
\begin{align*}
m_1(\xi)&=\xi  K_1(\xi)
	=\xi
	\sqrt{ \frac{
		\tanh \xi }{ \xi}}  \qquad (\xi\in \R) ,\\
	m_2(\xi)&=|\xi | K_2(\xi)
	=|\xi|
	\sqrt{ \frac{
		\tanh |\xi |}{ |\xi|}} \qquad (\xi\in \R^2).
		\end{align*}

\section{Non-dispersive estimates}
\setcounter{equation}{0}
\subsection
{
	Local well-posedness for $s>0$ in 1d
}

In this section we prove the local
well-posedness in $H^s \times H^{s + 1/2}$ with $s > 0$ for
System \eqref{wt1} applying a fixed-point argument.
It is only a particular case of Theorem \ref{simple_theorem}
(or of the equivalent theorem \ref{lwpthm}).
In this sense, the section has mainly
an illustrative character.
However, the proof is elegant and does not need any use of
dispersive techniques.
The idea is close to the one used in \cite{Bona_Tzvetkov},
for instance.
This allows us to think about System \eqref{wt1}
as a fully dispersive bi-directional relative to
the BBM equation.

Regard the Whitham operator $K = \sqrt{\tanh D / D}$
and introduce the space $X^s = H^s \times H^{s + 1/2}$
equipped with the norm
\begin{equation}
\label{Xs_norm_definition}
	\lVert (f, g) \rVert_{X^s}^2 =
	\lVert f \rVert_{H^s}^2 + \lVert K^{-1}g \rVert_{H^s}^2
\end{equation}
that is obviously equivalent to the standard one.
Denote by $X^s_T$ the space of continuous functions
defined on $[0, T]$ with values in $X^s$,
equipped with the supremum-norm.
Define matrices
\[
	\mathcal K
	=
	\frac 1{\sqrt 2}
	\begin{pmatrix}
		1 & 1
		\\
		K & -K
	\end{pmatrix}
	, \quad
	\mathcal K^{-1}
	=
	\frac 1{\sqrt 2}
	\begin{pmatrix}
		1 & K^{-1}
		\\
		1 & -K^{-1}
	\end{pmatrix}
	.
\]
Clearly, that $\mathcal K$ is isometric from
$H^s \times H^s$ to $X^s$ for any $s \in \mathbb R$, i. e.
\(
	\lVert \mathcal K (f, g)^T \rVert_{X^s}
	= \lVert (f, g) \rVert_{H^s \times H^s}
\).
Regard the unitary group
\[
	\mathcal S(t) =
	\mathcal K
	\begin{pmatrix}
		e^{-itm} & 0
		\\
		0 & e^{itm}
	\end{pmatrix}
	\mathcal K^{-1}
\]
where $m = m(D) = \sqrt{D \tanh D} \sgn D$.
Note that for any $s, t \in \mathbb R$,
$u \in X^s$ holds
\(
	\lVert \mathcal S(t)u \rVert_{X^s}
	=
	\lVert u \rVert_{X^s}
\)
and consequently
\(
	\lVert \mathcal S(t)u \rVert_{X^s_T}
	=
	\lVert u \rVert_{X^s_T}
\)
for any $T>0$.
These follow from isometricity of operators
$\mathcal K$, $\mathcal K^{-1}$ and that symbols of
eigenvalues of $\mathcal S(t)$ have absolute value
equal to one.
For any fixed $u_0 = (\eta_0, v_0)^T \in X^s$
function $\mathcal S(t)u_0$ solves the linear
initial-value problem associated with
\eqref{wt1}.
Regard a mapping $\mathcal A : X^s_T \to X^s_T$
defined by
\begin{equation}
\label{contraction_mapping}
	\mathcal A(\eta, v) =
	\mathcal A(\eta, v; u_0)(t) = \mathcal S(t)u_0
	+ \int_0^t \mathcal S(t - t') (-i\tanh D)
	\begin{pmatrix}
		\eta v
		\\
		v^2 / 2
	\end{pmatrix}
	(t')dt'
	.
\end{equation}
Then the Cauchy problem for System \eqref{wt1}
with the initial data $u_0$ may be rewritten equivalently
as an equation in $X^s_T$ of the form
\begin{equation}
\label{u_is_Au}
	 u = \mathcal A(u; u_0)
\end{equation}
where $u = (\eta, v)^T \in X^s_T$.
Below the latter integral equation is solved locally in time
by making use of Picard iterations.
\begin{lemma}
[Particular case of Theorem \ref{simple_theorem}]
\label{particular_case_lemma}
	Let $s > 0$, $u_0 = (\eta_0, v_0)^T \in X^s$
	and $T = ( 7C_s \lVert u_0 \rVert_{X^s} )^{-1}$
	with some constant $C_s > 0$ depending only on $s$.
	Then there exists a unique solution
	$u = (\eta, v)^T \in X^s_T$ of Problem \eqref{u_is_Au}.

	Moreover, for any $R > 0$ there exists $T = T(R) > 0$
	such that the flow map associated with Equation \eqref{u_is_Au}
	is a real analytic mapping of the open ball
	$B_R(0) \subset X^s$ to $X^s_T$.
\end{lemma}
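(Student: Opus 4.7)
The plan is to apply Banach's fixed point theorem to $\mathcal{A}(\,\cdot\,;u_0)$ on a closed ball in $X^s_T$, choosing $T$ small in terms of $R = \|u_0\|_{X^s}$. Because $\mathcal{S}(t)$ is a linear isometry on $X^s$, the free part contributes exactly $\|\mathcal{S}(\cdot)u_0\|_{X^s_T}=\|u_0\|_{X^s}$, so everything reduces to controlling the Duhamel integral in the $X^s_T$-norm.

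First I would use Minkowski's inequality in $t$ together with the isometry of $\mathcal{S}$ to bound the Duhamel term by $T\sup_{t'\in[0,T]}\|(-i\tanh D)(\eta v,v^2/2)^T\|_{X^s}$. By the definition \eqref{Xs_norm_definition} this quantity splits into $\|(-i\tanh D)(\eta v)\|_{H^s}$ and $\|K^{-1}(-i\tanh D)(v^2/2)\|_{H^s}$. The estimate $|\tanh\xi|\le 1$ handles the first, reducing it to $\|\eta v\|_{H^s}$. For the second, the identity $K^2D=\tanh D$ gives $K^{-1}(-i\tanh D)=-iKD$, whose symbol has size $O(\angles{\xi}^{1/2})$; hence that term is controlled by $\|v^2\|_{H^{s+1/2}}$.

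The key nonlinear step is then a pair of one-dimensional product estimates. For $s>0$ we have the embedding $H^{s+1/2}(\R)\hookrightarrow L^\infty(\R)$, so the fractional Leibniz (Kato--Ponce) inequality yields
\[
    \|\eta v\|_{H^s}\lesssim_s\|\eta\|_{H^s}\|v\|_{H^{s+1/2}},
    \qquad
    \|v^2\|_{H^{s+1/2}}\lesssim_s\|v\|_{H^{s+1/2}}^2,
\]
both bounded by a constant times $\|(\eta,v)\|_{X^s}^2$. Combining gives $\|\mathcal{A}(u;u_0)\|_{X^s_T}\le\|u_0\|_{X^s}+TC_s\|u\|_{X^s_T}^2$, and an entirely parallel bilinear computation delivers the Lipschitz bound $\|\mathcal{A}(u;u_0)-\mathcal{A}(\tilde u;u_0)\|_{X^s_T}\le TC_s(\|u\|_{X^s_T}+\|\tilde u\|_{X^s_T})\|u-\tilde u\|_{X^s_T}$. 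Restricting $\mathcal{A}$ to the closed ball of radius $2\|u_0\|_{X^s}$ and choosing $T$ so that $4C_sT\|u_0\|_{X^s}\le 1/2$ makes $\mathcal{A}$ a strict contraction stabilizing this ball, which explains the form $T=(7C_s\|u_0\|_{X^s})^{-1}$ in the statement. Banach's theorem then produces the unique fixed point $u\in X^s_T$.

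For the real-analytic flow map, I would observe that $(u,u_0)\mapsto u-\mathcal{A}(u;u_0)$ is a polynomial of degree two in $u$ and affine in $u_0$, hence real analytic from $X^s_T\times X^s$ to $X^s_T$; moreover the contraction estimate shows that $I-D_u\mathcal{A}$ is invertible at the fixed point (by Neumann series), so the real-analytic implicit function theorem gives analytic dependence of the solution on $u_0$ throughout $B_R(0)\subset X^s$. The only genuinely delicate step is the product estimate for $v^2$, which requires exactly $s+1/2>1/2$: this is what forces $s>0$ in this argument and is precisely the threshold that the Bourgain-space / dispersive approach in the following sections is designed to beat.
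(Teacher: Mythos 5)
Your proposal is correct and follows essentially the same route as the paper: a contraction argument in a ball of $X^s_T$ based on the isometry of $\mathcal{S}(t)$, the boundedness of $\tanh D$ and $K^{-1}\tanh D \sim \angles{D}^{1/2}$, and the product estimates $\norm{\eta v}_{H^s}\lesssim\norm{\eta}_{H^s}\norm{v}_{H^{s+1/2}}$, $\norm{v^2}_{H^{s+1/2}}\lesssim\norm{v}_{H^{s+1/2}}^2$, followed by the (real-analytic) implicit function theorem applied to $\Lambda(u_0,u)=u-\mathcal{A}(u;u_0)$ after checking invertibility of $I-d_u\mathcal{A}$. The only differences are cosmetic (ball centered at the origin rather than at $\mathcal{S}(t)u_0$, and a Neumann-series bound in place of the paper's explicit estimate on $\int_0^t\norm{u_1}_{X^s}^2\,dt'$), so nothing needs to be changed.
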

\begin{proof}
The idea is to show that the restriction of $\mathcal A$
on some closed ball $B_M$ centered at $\mathcal S(t)u_0$
is a contraction mapping.
The key ingredient is the product estimate
\(
	\norm{\eta v}_{H^s} \lesssim
	\norm{\eta}_{H^s} \norm{v}_{H^{s + 1/2}} 
\)
that can be found, for example in \cite{Hormander}.
Obviously, there exists a positive constant $C_s$
such that 
\[
	\lVert ( \eta v, v^2/2 ) \rVert _{X^s}
	\leqslant
	C_s \lVert ( \eta, v ) \rVert _{X^s}^2
\]
and
\[
	\lVert ( \eta_1 v_1 - \eta_2 v_2,
	v_1^2/2 - v_2^2/2 ) \rVert _{X^s}
	\leqslant
	C_s \lVert ( \eta_1 - \eta_2, v_1 - v_2 ) \rVert _{X^s}
	(
		\lVert ( \eta_1, v_1 ) \rVert _{X^s}
		+
		\lVert ( \eta_2, v_2 ) \rVert _{X^s}
	)
	.
\]

Thus for any $T, M > 0$ and $u, u_1, u_2 \in B_M \subset X^s_T$ hold
\[
	\lVert \mathcal A(u) - \mathcal S(t)u_0 \rVert _{X^s_T}
	\leqslant
	\int_0^T \lVert ( \eta v, v^2/2 ) \rVert _{X^s}
	\leqslant
	C_sT \lVert u \rVert _{X^s_T}^2
	,
\]
\[
	\lVert \mathcal A(u_1) - \mathcal A(u_2) \rVert _{X^s_T}
	\leqslant
	C_sT \lVert u_1 - u_2 \rVert _{X^s_T}
	(
		\lVert u_1 \rVert _{X^s_T}
		+
		\lVert u_2 \rVert _{X^s_T}
	)
	,
\]
and so taking $M = 2\lVert u_0 \rVert _{X^s}$
and $T$ as in the lemma formulation
we conclude that $\mathcal A$ is a contraction in
the closed ball $B_M$.
The first statement of the lemma follows from the
contraction mapping principle.

We turn our attention to smoothness of the flow map.
Let $R > 0$, $T = ( 7C_s R )^{-1}$ and
$B = B_R(0)$ be an open ball in $X^s$.
Define $\Lambda : B \times X^s_T \to X^s_T$ as
\[
	\Lambda(u_0, u) = u - \mathcal A(u; u_0)
\]
that is obviously a smooth map.
Its Fr\'echet derivative with respect to the second variable
is defined by
\[
	d_u\Lambda(u_0, u)h =
	h + i \int_0^t \mathcal S(t - t') \tanh D
	\begin{pmatrix}
		v & \eta
		\\
		0 & v
	\end{pmatrix}
	h(t')dt'
\]
where $u = (\eta, v)^T$ and $h \in X^s_T$.
If $u_1 \in X^s_T$ is the solution of Problem \eqref{u_is_Au}
corresponding the initial data $u_0 \in B$ then
$\Lambda(u_0, u_1) = 0$.
Moreover, it satisfies the following estimate
\[
	\lVert u_1(t) \rVert_{X^s}
	\leqslant
	\lVert u_0 \rVert_{X^s} +
	C_s \int_0^t \lVert u_1(t') \rVert_{X^s}^2 dt'
\]
and so
\[
	\int_0^t \lVert u_1(t') \rVert_{X^s}^2 dt'
	\leqslant
	\frac{ t \lVert u_0 \rVert_{X^s}^2 }
	{1 - C_st \lVert u_0 \rVert_{X^s}}
\]
for any $t$.
The latter is used to estimate operator
$ I - d_u\Lambda(u_0, u_1) $ as follows
\begin{multline*}
	\lVert h - d_u\Lambda(u_0, u_1)h \rVert
	\leqslant
	C_s \sup _{t \in [0, T]} \int_0^t
	\lVert u_1(t') \rVert_{X^s}
	\lVert h(t') \rVert_{X^s} dt'
	\\
	\leqslant
	C_s \sup _{t \in [0, T]}
	\left(
		t \int_0^t \lVert u_1(t') \rVert _{X^s}^2 dt'
	\right) ^{1/2}
	\lVert h \rVert_{X^s_T} 
	\leqslant
	\frac{ C_sT \lVert u_0 \rVert_{X^s} }
	{ \sqrt{ 1 - C_sT \lVert u_0 \rVert_{X^s} } }
	\lVert h \rVert_{X^s_T} 
	\leqslant
	\frac 1
	{ \sqrt{ 42 } }
	\lVert h \rVert_{X^s_T} 
\end{multline*}
which is true for any $h \in X^s_T$.
As a result operator $d_u\Lambda(u_0, u_1)$ is invertible
and so the second assertion of the lemma follows from
the Implicit Function Theorem.
\end{proof}

The next and most difficult step is to extend the statement
of the lemma to the case $s \leqslant 0$ as well.
Even extension to the limiting case $s = 0$ is not trivial.
On the one hand, it seems possible to do it without
the dispersive estimates,
applying the energy method, for example.
Indeed, we have the Hamiltonian conservation
that can provide us with a necessary a priori bound
(see Lemma \ref{global_lemma_s_0} below).
However, at such level of regularity with $s = 0$
the regularization of System \eqref{wt1}
can be a serious issue.
In other words, one cannot guarantee that the a priori
estimate will be still valid for the regularised problem.
Moreover, we can hardly hope for more than a weak solution
after implementing the compactness argument.
So we turn our attention to the Harmonic Analysis methods,
since we can eventually achieve a more general result
with the dispersive estimates obtained below
in the next sections.

\subsection
{
	Local well-posedness for $s>1/2$ in 2d
}
The proof is essentially the same.
Now the change of variables has the form
\[
	\mathcal K
	=
	\frac 1{\sqrt 2}
	\begin{pmatrix}
		1 & 1
		\\
		-iKR_1 & iKR_1
		\\
		-iKR_2 & iKR_2
	\end{pmatrix},
\]
where $K = \sqrt{\tanh |D| / |D|}$.
Then $\mathcal K$
is an isometric operator from
$H^s \times H^s$ to the subspace $X^s$ of
$H^s \times \left( H^{s+1/2} \right)^2$
with the curl free second coordinate
and endowed with the norm
\(
	\left \lVert
		\mathcal K^{-1} (\eta, \mathbf v)^T
	\right \rVert _{H^s \times H^s}
	.
\)
This $\mathcal K$ defines a continuous group
$\mathcal S(t)$ as above.
For any fixed $u_0 = (\eta_0, \mathbf v_0)^T \in X^s$
function $\mathcal S(t)u_0$ solves the linear
initial-value problem associated with
\eqref{wt2d}
in $X^s_T = C([0, T]; X^s)$.
Considering the map $\mathcal A : X^s_T \to X^s_T$
defined by
\begin{equation}
\label{contraction_mapping2d}
	\mathcal A(\eta, \mathbf v; u_0)(t) = \mathcal S(t)u_0
	- \int_0^t \mathcal S(t - t')
	\begin{pmatrix}
		K^2 \nabla \cdot ( \eta \mathbf v )
		\\
		K^2 \nabla \left( | \mathbf v|^2/2 \right)
	\end{pmatrix}
	(t')dt'
\end{equation}
we reduce
the Cauchy problem for System \eqref{wt2d}
with the initial data $u_0$ to Equation \eqref{u_is_Au}
in $X^s_T$ again, with the only difference that now
$u = (\eta, \mathbf v)^T \in X^s_T$ is a three component vector.
\begin{lemma}
[Particular case of Theorem \ref{simple_theorem2d}]
\label{particular_case_lemma2d}
	Let $s > 1/2$, $u_0 \in X^s$
	and $T = ( 7C_s \lVert u_0 \rVert_{X^s} )^{-1}$
	with some constant $C_s > 0$ depending only on $s$.
	Then there exists a unique solution
	$u \in X^s_T$ of Problem \eqref{u_is_Au}.

	Moreover, for any $R > 0$ there exists $T = T(R) > 0$
	such that the flow map associated with Equation \eqref{u_is_Au}
	is a real analytic mapping of the open ball
	$B_R(0) \subset X^s$ to $X^s_T$.
\end{lemma}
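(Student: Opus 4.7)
The plan is to mirror the contraction-mapping argument of Lemma \ref{particular_case_lemma}, adapted to the two dimensional setting. First I would observe that $\mathcal{S}(t)$ is still an isometry of $X^s$ (and hence of $X^s_T$): $\mathcal{K}$ is an isometry from $H^s \times H^s$ onto the curl-free subspace $X^s \subset H^s \times (H^{s+1/2})^2$ by construction, and the diagonal phases $e^{\mp it m_2(D)}$ are unitary on $H^s \times H^s$. Moreover the curl-free condition is preserved by $\mathcal{A}$: the nonlinearity in the $\mathbf v$-component is $K_2^2\nabla(|\mathbf v|^2/2)$, which is a gradient and hence automatically curl-free, so $\mathcal A$ maps $X^s_T$ into itself.

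The core of the argument is then to bound the Duhamel term. Writing $\|(\eta, \mathbf v)\|_{X^s} \sim \|\eta\|_{H^s} + \|\mathbf v\|_{H^{s+1/2}}$, I need the two nonlinear estimates
\[
\bignorm{K_2^2\nabla\cdot(\eta\mathbf v)}_{H^s}\lesssim \|\eta\|_{H^s}\|\mathbf v\|_{H^{s+1/2}},\qquad \bignorm{K_2^2\nabla(|\mathbf v|^2/2)}_{H^{s+1/2}}\lesssim \|\mathbf v\|_{H^{s+1/2}}^2,
\]
together with the analogous bilinear difference estimates. The operator $K_2^2\nabla$ is a Fourier multiplier with symbol $i\xi\,\tanh|\xi|/|\xi|$, uniformly bounded on $\R^2$, so it is a bounded zeroth-order operator on any $H^\sigma$. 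Thus the first bound reduces to the product estimate $\|\eta\mathbf v\|_{H^s}\lesssim \|\eta\|_{H^s}\|\mathbf v\|_{H^{s+1/2}}$, and for the second it suffices to show that the scalar operator $K_2^{-1}(K_2^2\nabla) = K_2\nabla$ maps $H^{s+1/2}$ to $H^s$ (its symbol is dominated by $C\angles{\xi}^{1/2}$), reducing matters to $\||\mathbf v|^2\|_{H^{s+1/2}}\lesssim \|\mathbf v\|_{H^{s+1/2}}^2$. Both product inequalities follow from the standard fact that $H^s\cdot H^t\hookrightarrow H^{\min(s,t)}$ in $\R^d$ when $\max(s,t)>d/2$; here $d=2$, so the requirement $s+1/2>1$ is exactly the hypothesis $s>1/2$. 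This is the one place where the dimension enters nontrivially.

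With these inequalities in hand, the rest of the proof is essentially identical to Lemma \ref{particular_case_lemma}. Taking $M=2\|u_0\|_{X^s}$ and $T=(7C_s\|u_0\|_{X^s})^{-1}$, the estimates
\[
\lVert \mathcal{A}(u)-\mathcal{S}(t)u_0\rVert_{X^s_T}\leq C_sT\lVert u\rVert_{X^s_T}^2,\qquad \lVert \mathcal{A}(u_1)-\mathcal{A}(u_2)\rVert_{X^s_T}\leq C_sT\bigl(\lVert u_1\rVert_{X^s_T}+\lVert u_2\rVert_{X^s_T}\bigr)\lVert u_1-u_2\rVert_{X^s_T}
\]
exhibit $\mathcal{A}$ as a contraction on the closed ball $B_M\subset X^s_T$, and Banach's fixed-point theorem yields the unique solution. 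For real analyticity of the flow map, I would introduce $\Lambda(u_0,u)=u-\mathcal{A}(u;u_0)$, which is polynomial (hence analytic) in $(u_0,u)$, and verify the invertibility of $d_u\Lambda(u_0,u_1)$ via the same Duhamel-based bound as in the 1d case, with the product estimate replaced by its 2d counterpart derived above; the Implicit Function Theorem then gives analyticity of $u_0\mapsto u_1$ from $B_R(0)\subset X^s$ to $X^s_T$. The only real obstacle is checking the product estimates and verifying that the curl-free constraint is respected throughout; both are essentially bookkeeping once the threshold $s>1/2$ is identified.
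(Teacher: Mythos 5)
Your proposal is correct and follows exactly the route the paper takes (and indeed the paper omits the details for this very reason): the 1d contraction/Implicit-Function-Theorem argument of Lemma \ref{particular_case_lemma} carried over verbatim, with the only new ingredients being the 2d product estimate, which forces the threshold $s>1/2$, and the observation that the curl-free structure is preserved. Your bookkeeping of the multipliers $K_2^2\nabla$ and $K_2\nabla$ and of the algebra property of $H^{s+1/2}(\R^2)$ is accurate, so there is nothing to add.
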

As above the key ingredient is the same product estimate
that in the case $d = 2$ is valid only provided $s > 1/2$,
and so we omit the proof.

\subsection
{
	A priori estimates for $s \geqslant 0$ in 1d
}
Firstly, we prove the following global bound
in the energy space $X^0$.

\begin{lemma}
\label{global_lemma_s_0}
	There exists a constant $\epsilon_0 > 0$ such that
	for any $\epsilon \in (0, \epsilon_0]$, if
	a pair
	\(
		u(t) = ( \eta(t), v(t) )
		\in L^2(\mathbb{R}) \times H^{1/2}(\mathbb{R})
	\)
	having initial condition
	\(
		\lVert u_0 \rVert _{L^2 \times H^{1/2}}
		\leqslant \epsilon / 2
	\),
	solves System \eqref{wt1},
	then its norm remains bounded
	\(
		\lVert u(t) \rVert _{L^2 \times H^{1/2}}
		\leqslant \epsilon
	\)
	for any time $t$.
\end{lemma}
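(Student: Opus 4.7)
My plan is to use conservation of the Hamiltonian $\mathcal H$ from \eqref{Hamiltonian1} together with a standard continuity (bootstrap) argument. The first step is to recast $\mathcal H$ on an equivalent energy norm. Since the symbol $\xi/\tanh\xi$ of $K_1^{-2}$ equals $1$ at the origin and is asymptotic to $|\xi|$ at infinity, it is comparable to $\langle\xi\rangle$ uniformly on $\R$. Setting
\[
	\|u\|_E^2 := \|\eta\|_{L^2}^2 + \innerprod{K_1^{-2} v}{v},
\]
we obtain a norm on $X^0 = L^2(\R) \times H^{1/2}(\R)$ equivalent to the standard one, and
\[
	2 \mathcal H(u) = \|u\|_E^2 + \int_{\R} \eta v^2 \, dx.
\]
Thus only the cubic integral prevents $\mathcal H$ from being a squared norm.

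The second step is to control the cubic term by Hölder together with the one-dimensional Sobolev embedding $H^{1/2}(\R) \hookrightarrow L^4(\R)$:
\[
	\biggabs{\int_{\R} \eta v^2 \, dx} \leq \|\eta\|_{L^2} \|v\|_{L^4}^2 \leq C \|\eta\|_{L^2} \|v\|_{H^{1/2}}^2 \leq C' \|u\|_E^3.
\]
Combining this with the conservation identity $\mathcal H(u(t)) = \mathcal H(u_0)$ applied to both instances of the cubic term, I obtain the self-referential bound
\[
	\|u(t)\|_E^2 \leq \|u_0\|_E^2 + C'\bigl( \|u_0\|_E^3 + \|u(t)\|_E^3 \bigr).
\]
Conservation of $\mathcal H$ at the low regularity $X^0$ must be justified by approximating the solution by smooth ones and passing to the limit using continuity of the local flow from Theorem \ref{simple_theorem}.

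The third step is a continuity argument in $\|\cdot\|_E$. On the maximal existence interval $[0, T_{\max})$, define $T^\ast := \sup\{\, t < T_{\max} : \|u(s)\|_E \leq M \text{ for all } s \in [0, t]\,\}$ with $M$ a bootstrap threshold chosen slightly larger than $\|u_0\|_E$ but small enough that the norm-equivalence translates $\|u(t)\|_E \leq M$ into $\|u(t)\|_{L^2 \times H^{1/2}} \leq \epsilon$. The self-referential bound then gives $\|u(T^\ast)\|_E^2 \leq \|u_0\|_E^2 + C'(\|u_0\|_E^3 + M^3) < M^2$ for $\epsilon_0$ sufficiently small, forcing $T^\ast = T_{\max}$ by continuity. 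The main subtlety is calibrating $M$, $\epsilon_0$, and the bootstrap factor so that the factor-of-two slack furnished by the hypothesis $\|u_0\| \leq \epsilon/2$ absorbs the norm-equivalence constants between $\|\cdot\|_E$ and $\|\cdot\|_{L^2 \times H^{1/2}}$; this is possible because the ratio $\xi/(\tanh\xi \cdot \langle\xi\rangle)$ stays uniformly close to $1$, so the norm equivalence is tight enough to leave strictly positive room for the bootstrap to close.
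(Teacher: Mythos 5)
Your proposal is correct and follows essentially the same route as the paper: conservation of $\mathcal H$, the estimate $\bigabs{\int \eta v^2\,dx} \lesssim \lVert u\rVert^3$ (which is exactly what underlies the paper's two-sided bound $\lVert u\rVert^2(1-C\lVert u\rVert) \leqslant \mathcal H(u) \leqslant \lVert u\rVert^2(1+C\lVert u\rVert)$), and a continuity/bootstrap argument in the equivalent energy norm. Your explicit bootstrap threshold $M$ and the remark on justifying conservation at $X^0$ regularity by approximation are only cosmetic refinements of the paper's argument.
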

\begin{proof}
We use a continuity argument.
Without loss of generality
we prove the statement with the $X^0$-norm defined in \eqref{Xs_norm_definition}, which is equivalent to the $L^2 \times H^{1/2}$-norm. For $u=(\eta, v)$, define
\[
	\lVert u \rVert ^2:
	=
	\frac12 \lVert u \rVert _{X^0}^2
	=
	\frac 12 \lVert \eta \rVert _{L^2}^2
	+
	\frac 12 \lVert K^{-1} v \rVert _{L^2}^2
	.
\]
Then there exists $C > 0$ such that
\[
	\lVert u \rVert ^2
	( 1 - C \lVert u \rVert )
	\leqslant
	\mathcal H(u)
	\leqslant
	\lVert u \rVert ^2
	( 1 + C \lVert u \rVert )
\]
where $u = u(t)$ is a solution of \eqref{wt1}
defined on some interval.
Take $\epsilon_0 = (2C)^{-1}$,
any $0 < \epsilon \leqslant \epsilon_0$
and a solution with $u_0 = u(0)$
having $\lVert u_0 \rVert \leqslant \epsilon / 2$.
By continuity
$\lVert u \rVert \leqslant \epsilon$
on some $[0, T_{\epsilon}]$ and so
\[
	\lVert u \rVert
	\leqslant
	\sqrt{2 \mathcal H(u)} = \sqrt{2 \mathcal H(u_0)}
	\leqslant
	\sqrt{\frac {1 + C \epsilon /2}2 } \epsilon < \epsilon
\]
which means that
the continuous function
$\lVert u(t) \rVert$ cannot touch the level
$\epsilon$ with time.
\end{proof}

Proving the next lemma we will employ
a sharper variant of the bilinear estimates
used at the beginning of the proof of Lemma \ref{particular_case_lemma}. 
Recall the notation
\(
	\lVert (\eta, v) \rVert _{X^s}
\)
defined by \eqref{Xs_norm_definition}.

\begin{lemma}
[Persistence of regularity]
\label{persistence_lemma}
	Suppose $s > 0$ and a pair
	$\eta(t) \in H^s$, $v(t) \in H^{s + 1/2}$
	solves Problem \eqref{wt1}, \eqref{data1}.
	Then if $s < 1/2$ the following holds true
	\[
		\lVert (\eta, v)(t) \rVert _{X^s}
		\leqslant
		\lVert (\eta_0, v_0) \rVert _{X^s}
		+ C_s \int_0^t
		\left(
			\lVert v \rVert _{H^{1/2}}
			+ \lVert v \rVert _{L^{\infty}}
		\right)
		\lVert (\eta, v) \rVert _{X^s}
		,
	\]
	and if $s \geqslant 1/2$ then
	\[
		\lVert (\eta, v)(t) \rVert _{X^s}
		\leqslant
		\lVert (\eta_0, v_0) \rVert _{X^s}
		+ C_s \int_0^t
		\lVert v \rVert _{H^{s + 1/4}}
		\lVert (\eta, v) \rVert _{X^s}
	\]
	where constant $C_s$ depends only on $s$.
\end{lemma}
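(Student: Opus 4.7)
The plan is to run a standard energy estimate. I would apply $J^s = \angles{D}^s$ to each equation of \eqref{wt1}, pair the $\eta$-equation in $L^2$ with $J^s\eta$ and the $v$-equation with $J^sK_1^{-2}v$, and sum. Because $K_1$, $K_1^{-1}$ and $J^s$ are commuting self-adjoint Fourier multipliers with real symbols, the two linear cross-terms combine to $-\int \partial_x(J^s\eta\cdot J^sv)\,dx$, which vanishes by integration by parts. We are therefore reduced to
\begin{equation*}
	\tfrac12\tfrac{d}{dt}\bigl(\|\eta\|_{H^s}^2+\|K_1^{-1}v\|_{H^s}^2\bigr) = N_1 + N_2,
\end{equation*}
where $N_1, N_2$ collect the two quadratic contributions.

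For $N_1$ the key point is that $K_1^2\partial_x = i\tanh D$ is a Fourier multiplier bounded on $L^2$ (hence on every $H^r$) by $1$, so $|N_1|\leq \|\eta v\|_{H^s}\|\eta\|_{H^s}$. For $N_2$ I would shift one factor of $K_1^{-1}$ onto the test function, leaving the multiplier $K_1\partial_x$, which has symbol of order $+1/2$; this yields $|N_2|\lesssim \|v^2\|_{H^{s+1/2}}\|v\|_{H^{s+1/2}}$. The lemma thus reduces to the bilinear estimates
\begin{equation*}
	\|\eta v\|_{H^s} + \|v^2\|_{H^{s+1/2}} \lesssim M_s(v)\,\|(\eta,v)\|_{X^s},
\end{equation*}
where $M_s(v) = \|v\|_{H^{1/2}}+\|v\|_{L^\infty}$ for $0<s<1/2$ and $M_s(v) = \|v\|_{H^{s+1/4}}$ for $s\geq 1/2$.

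For $s\geq 1/2$ both factors $\eta$ and $v$ lie in $L^\infty$ by Sobolev embedding (noting $\|v\|_{L^\infty}\lesssim \|v\|_{H^{s+1/4}}$ since $s+1/4>1/2$), and the estimates follow from the classical Kato--Ponce fractional Leibniz rule; the harmless logarithmic loss at the borderline $s=1/2$ is absorbed by the extra $1/4$ derivative present in $\|v\|_{H^{s+1/4}}$. The delicate case is $0<s<1/2$, where $\eta\notin L^\infty$ and the standard Kato--Ponce argument for $\|\eta v\|_{H^s}$ breaks down. Here I would decompose via Bony's paraproducts $\eta v = T_\eta v + T_v\eta + R(\eta,v)$: the term $T_v\eta$ produces $\|v\|_{L^\infty}\|\eta\|_{H^s}$ by the usual paraproduct bound, while for $T_\eta v$ and the resonant remainder $R(\eta,v)$, Bernstein's inequality $\|P_k\eta\|_{L^\infty}\lesssim 2^{k/2}\|P_k\eta\|_{L^2}$ combined with a Schur summation (allowed because $s>0$) trades the missing $L^\infty$ control on $\eta$ for a factor of $\|v\|_{H^{1/2}}$, precisely the borderline Sobolev norm appearing in $M_s(v)$. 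The bound on $\|v^2\|_{H^{s+1/2}}$ is then routine Kato--Ponce, since $v\in L^\infty$ by Sobolev embedding ($s+1/2>1/2$).

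Combining the bilinear estimates with the energy identity yields the differential inequality $\tfrac{d}{dt}\|(\eta,v)\|_{X^s}^2\lesssim M_s(v)\|(\eta,v)\|_{X^s}^2$; dividing by $\|(\eta,v)\|_{X^s}$ and integrating on $[0,t]$ produces the asserted integral inequality in both regimes. The main obstacle is the bilinear estimate for $0<s<1/2$: one must show that the \emph{energy-level} Sobolev norm $\|v\|_{H^{1/2}}$ together with $\|v\|_{L^\infty}$ suffices to control $\|\eta v\|_{H^s}$ in the absence of an $L^\infty$-bound on $\eta$. This is exactly what the Littlewood--Paley/Bernstein trade accomplishes, and it is the key technical step that makes the lemma compatible with the energy-level a priori bound coming from the Hamiltonian conservation.
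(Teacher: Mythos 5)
Your argument is correct, but it takes a genuinely different route from the paper. The paper never differentiates the energy: it works with the Duhamel formulation \eqref{u_is_Au}, and since $\mathcal S(t)$ is an isometry on $X^s$ it obtains at once $\lVert(\eta,v)(t)\rVert_{X^s}\le\lVert(\eta_0,v_0)\rVert_{X^s}+\int_0^t\lVert\left(\tanh D(\eta v),\,\tanh D(v^2/2)\right)\rVert_{X^s}\,dt'$, which is essentially the inequality your cancellation-plus-pairing computation yields; it then bounds the integrand by the fractional Leibniz rule \eqref{persistence01} with the exponents $p_1=2$, $q_1=\infty$, $p_2=2/(1-2s)$, $q_2=1/s$ (and $p_2=q_2=4$ at $s=1/2$) together with Sobolev embedding, rather than by your Bony paraproduct/Bernstein/Schur argument. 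Both reductions land on the same two bilinear bounds, and your paraproduct proof of $\lVert\eta v\rVert_{H^s}\lesssim(\lVert v\rVert_{L^\infty}+\lVert v\rVert_{H^{1/2}})\lVert\eta\rVert_{H^s}$ for $0<s<1/2$ is sound. What the paper's route buys is rigor for free at low regularity: in your differential identity the two linear cross-terms are not individually well-defined pairings for small $s$ (e.g.\ $J^s\partial_x v\in H^{-1/2}$ against $J^s\eta\in L^2$ only), so to be complete you should either insert a frequency truncation $P_{\le N}$ before pairing (harmless here, since all operators involved are commuting Fourier multipliers, so the exact cancellation of the linear part survives and no commutator terms appear) or simply start from Duhamel as the paper does; there is also the routine care needed when dividing by a possibly vanishing norm. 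What your route buys is a transparent use of the skew-adjoint structure that kills the linear part, and a low-regularity product estimate that avoids choosing Lebesgue exponents. One small inaccuracy: at $s=1/2$ you cannot place $\eta$ in $L^\infty$ ($H^{1/2}\not\hookrightarrow L^\infty(\R)$); the claimed bound $\lVert\eta v\rVert_{H^{1/2}}\lesssim\lVert\eta\rVert_{H^{1/2}}\lVert v\rVert_{H^{3/4}}$ is nevertheless true and needs no logarithmic correction — the paper gets it from \eqref{persistence01} with $p_2=q_2=4$, i.e.\ $\lVert\eta\rVert_{L^4}\lVert J^{1/2}v\rVert_{L^4}\lesssim\lVert\eta\rVert_{H^{1/2}}\lVert v\rVert_{H^{3/4}}$.
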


\begin{proof}
Estimating $\mathcal A(t)$ given by
\eqref{contraction_mapping} in $X^s$-norm
defined by \eqref{Xs_norm_definition}, one deduces
from \eqref{u_is_Au} the following inequality
\[
	\lVert (\eta, v)(t) \rVert _{X^s}
	\leqslant
	\lVert (\eta_0, v_0) \rVert _{X^s}
	+ \int_0^t
	\left \lVert
		\begin{pmatrix}
			\tanh D ( \eta v )
			\\
			\tanh D ( v^2 / 2 )
		\end{pmatrix}
		(t')
	\right \rVert _{X^s} dt'
	.
\]
It is left to calculate the integrand.
Provided $s \in (0, 1/2)$ by the Leibniz rule
\cite{Linares_Ponce} we have
\begin{equation}
\label{persistence01}
	\lVert J^s \tanh D(\eta v) \rVert _{L^2}
	\lesssim
	\lVert J^s \eta \rVert _{L^{p_1}}
	\lVert v \rVert _{L^{q_1}}
	+
	\lVert \eta \rVert _{L^{p_2}}
	\lVert J^s v \rVert _{L^{q_2}}
\end{equation}
where setting $p_1 = 2$, $q_1 = \infty$, $p_2 = 2 / (1 - 2s)$,
$q_2 = 1/s$ and using the Sobolev embedding obtain
\[
	\lVert J^s \tanh D(\eta v) \rVert _{L^2}
	\lesssim
	\lVert \eta \rVert _{H^s}
	\left(
		\lVert v \rVert _{L^{\infty}}
		+
		\lVert v \rVert _{H^{1/2}}
	\right)
	.
\]
Similarly, but now for any $s \in (0, \infty)$ we have
\begin{equation}
\label{persistence02}
	\left \lVert J^s K^{-1} \tanh D v^2 \right \rVert _{L^2}
	\lesssim
	\left \lVert J^{s + 1/2} v^2 \right \rVert _{L^2}
	\lesssim
	\lVert v \rVert _{L^{\infty}}
	\left \lVert J^{s + 1/2} v \right \rVert _{L^2}
	\lesssim
	\lVert v \rVert _{L^{\infty}}
	\left \lVert K^{-1} v \right \rVert _{H^s}
	.
\end{equation}
This implies the first inequality in the statement
valid for $s \in (0, 1/2)$.

Regarding the case $s = 1/2$ and setting
$p_2 = q_2 = 4$ with the same
$p_1 = 2$, $q_1= \infty$
in the Leibniz inequality \eqref{persistence01},
after implementation the Sobolev embedding,
obtain
\[
	\lVert J^s \tanh D(\eta v) \rVert _{L^2}
	\lesssim
	\lVert \eta \rVert _{H^s}
	\lVert v \rVert _{H^{s + 1/4}}
	.
\]
This inequality is obvious for $s > 1/2$
since $H^s$
is an algebra under the point-wise product,
and so is true for any $s \geqslant 1/2$.
Taking into account \eqref{persistence02}
we deduce the second inequality of the lemma.
\end{proof}

In order to use the persistence of regularity lemma
\ref{persistence_lemma} one needs
two Gronwall inequalities.
One of them is considered to be standard.
For the completeness,
we give here a proof of the other
Gronwall type inequality, which is less standard
and will be used below.

\begin{lemma}
[Gronwall inequality]
\label{Gronwall_lemma}
	Let $y(t) > 1$ be a continuous 
	function defined on some interval $[0, T]$
	with $y(0) = y_0$.
	Suppose that for any $t \in [0, T]$ hold
	\[
		y(t) \leqslant y_0 + C \int_0^t y \log y
		.
	\]
	Then
	\[
		y(t) \leqslant \exp \left( e^{Ct} \log y_0 \right)
		.
	\]
\end{lemma}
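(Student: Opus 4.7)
The plan is to convert the integral inequality into an ODE inequality by passing to the function defined by the right-hand side, and then apply the substitution $w = \log z$ to reduce everything to the standard linear Gronwall lemma.

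First, I would set
\[
	z(t) = y_0 + C \int_0^t y(s) \log y(s) \, ds,
\]
so that by hypothesis $y(t) \leqslant z(t)$, with $z(0) = y_0 > 1$. Since $y(s) > 1$ on $[0,T]$ and $y_0 > 1$, the function $z$ is $C^1$, strictly greater than $1$, and non-decreasing, with $z'(t) = C \, y(t) \log y(t)$. The function $x \mapsto x \log x$ is strictly increasing on $[1, \infty)$, and $1 < y(t) \leqslant z(t)$, hence
\[
	z'(t) \leqslant C \, z(t) \log z(t).
\]

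Next, I would linearize by setting $w(t) = \log z(t)$, which is well-defined and positive because $z(t) > 1$. Dividing the previous inequality by $z(t)$ gives
\[
	w'(t) = \frac{z'(t)}{z(t)} \leqslant C \log z(t) = C w(t).
\]
This is the classical differential form of Gronwall; multiplying by $e^{-Ct}$ shows $\left( e^{-Ct} w(t) \right)' \leqslant 0$, so $w(t) \leqslant w(0) \, e^{Ct} = (\log y_0) \, e^{Ct}$. Exponentiating back and using $y(t) \leqslant z(t)$ yields
\[
	y(t) \leqslant z(t) = e^{w(t)} \leqslant \exp\!\left( e^{Ct} \log y_0 \right),
\]
which is the desired bound.

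There is no real obstacle here; the only points that require a moment of care are checking that $z(t) > 1$ so that $\log z$ is well-defined and positive, and invoking the monotonicity of $x \log x$ on $(1, \infty)$ to upgrade $y \log y$ to $z \log z$ inside the integral. Both are immediate from the hypothesis $y(t) > 1$ together with $z(t) \geqslant y_0 > 1$.
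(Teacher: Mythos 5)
Your proof is correct and follows essentially the same route as the paper: you introduce $z(t)=y_0+C\int_0^t y\log y$, use $1<y\leqslant z$ together with monotonicity of $x\log x$ to get $z'\leqslant Cz\log z$, and your substitution $w=\log z$ plus linear Gronwall is exactly the paper's computation $\frac{d}{dt}\log\log z\leqslant C$ followed by the fundamental theorem of calculus, just written in two explicit steps instead of one.
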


\begin{proof}
One can easily calculate
\[
	\frac d{dt} \log \log
	\left(
		y_0 + C \int_0^t y \log y
	\right)
	=
	\frac
	{
		C y \log y
	}
	{
	\left(
		y_0 + C \int_0^t y \log y
	\right)
	\log
	\left(
		y_0 + C \int_0^t y \log y
	\right)
	}
	\leqslant C
\]
where we have used the dominance of $y(t)$
by the integral expression.
The fundamental theorem of calculus
provides us with the claim.

\end{proof}

The persistence of regularity
based on the energy estimate lemma \ref{persistence_lemma}
transforms to the following a priori estimates.

\begin{lemma}
\label{global_lemma_s_positive}
	Suppose $s > 0$ and a pair
	$u(t) = ( \eta(t), v(t) ) \in X^s$ solves
	System \eqref{wt1} on some time interval
	with $u(0) = u_0$ small enough
	with respect to $X^0$-norm
	in the sense of Lemma \ref{global_lemma_s_0}.
	Then if $s < 1/2$ the following holds true
	\[
		\lVert u(t) \rVert _{X^s}
		\leqslant
		\exp
		\left(
			Ce^{Ct}
		\right)
		,
	\]
	and if $s \geqslant 1/2$ then
	\[
		\lVert u(t) \rVert _{X^s}
		\leqslant
		\lVert u_0 \rVert _{X^s}
		\exp
		\left(
			C \int_0^t \lVert v \rVert _{H^{s + 1/4}}
		\right)
	\]
	where constant $C$ depends only on $s$,
	$\lVert u(0) \rVert _{X^0}$ and
	$\lVert u(0) \rVert _{X^s}$.
\end{lemma}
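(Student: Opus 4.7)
The plan is to combine the persistence estimate from Lemma \ref{persistence_lemma} with the a priori $X^0$-bound of Lemma \ref{global_lemma_s_0}, and in the subcritical range $0 < s < 1/2$ also with the Brezis--Gallouet inequality from Lemma \ref{Brezis_lemma}. The two regimes $s \geqslant 1/2$ and $0 < s < 1/2$ differ only in which Gronwall-type argument is invoked at the end.

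The case $s \geqslant 1/2$ is immediate: the second inequality of Lemma \ref{persistence_lemma} already has the linear form
\[
	\lVert u(t) \rVert_{X^s}
	\leqslant
	\lVert u_0 \rVert_{X^s}
	+ C_s \int_0^t \lVert v(t') \rVert_{H^{s + 1/4}} \lVert u(t') \rVert_{X^s} \, dt',
\]
so the standard linear Gronwall inequality produces exactly the claimed exponential bound.

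The case $0 < s < 1/2$ requires more care. First, since $\lVert u_0 \rVert_{X^0}$ is small in the sense of Lemma \ref{global_lemma_s_0}, that lemma yields a uniform a priori bound $\lVert v(t) \rVert_{H^{1/2}} \leqslant C_0$, with $C_0$ depending only on $\lVert u_0 \rVert_{X^0}$. Next, since $s + 1/2 > 1/2 = d/2$ in dimension $d = 1$, the Brezis--Gallouet inequality (Lemma \ref{Brezis_lemma}) gives
\[
	\lVert v \rVert_{L^{\infty}}
	\leqslant
	C \bigl( 1 + \lVert v \rVert_{H^{1/2}} \sqrt{ \log \bigl( 2 + \lVert v \rVert_{H^{s + 1/2}} \bigr) } \bigr)
	\leqslant
	C \bigl( 1 + \sqrt{ \log \bigl( e + \lVert u \rVert_{X^s} \bigr) } \bigr),
\]
after absorbing $C_0$ and using $\lVert v \rVert_{H^{s + 1/2}} \lesssim \lVert u \rVert_{X^s}$. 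Feeding this into the first inequality of Lemma \ref{persistence_lemma} and then applying the elementary bound $1 + \sqrt{\log y} \leqslant 2 \log y$ valid for $y \geqslant e$ leads, with $y(t) := e + \lVert u(t) \rVert_{X^s} > 1$, to
\[
	y(t) \leqslant y(0) + C \int_0^t y(t') \log y(t') \, dt'.
\]
Lemma \ref{Gronwall_lemma} then gives $y(t) \leqslant \exp\bigl( e^{Ct} \log y(0) \bigr)$, which is the desired double-exponential estimate with constants depending on $s$, $\lVert u_0 \rVert_{X^0}$ and $\lVert u_0 \rVert_{X^s}$.

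The one nontrivial step is the reconciliation between the $\sqrt{\log}$ output of Brezis--Gallouet and the $y \log y$ input required by Lemma \ref{Gronwall_lemma}: the square root cannot be absorbed in general, but can be majorised by a plain $\log$ provided its argument stays bounded below by $e$. Shifting the additive constant inside the logarithm from $2$ to $e$ forces $\log y \geqslant 1$ throughout and makes this majorisation possible; after this cosmetic adjustment the two ingredients mesh cleanly and the double-exponential bound follows.
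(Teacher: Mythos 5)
Your proposal is correct and follows essentially the same route as the paper: a uniform $H^{1/2}$ bound from Lemma \ref{global_lemma_s_0}, the Brezis--Gallouet inequality combined with the first persistence estimate and the logarithmic Gronwall lemma (Lemma \ref{Gronwall_lemma}) for $0<s<1/2$, and the second persistence estimate with standard linear Gronwall for $s \geqslant 1/2$. Your shift of the additive constant from $2$ to $e$ inside the logarithm is only a cosmetic variant of the paper's absorption of $1+\log(2+\cdot)$ into $\log(2+\cdot)$ via $\log 2>0$.
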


\begin{proof}
Suppose $s \in (0, 1/2)$ and
$u(t) = ( \eta(t), v(t) ) \in X^s$ solves
System \eqref{wt1} on some time interval.
Let its initial data $u_0$ be small
with respect to $X^0$-norm
in the sense of Lemma \ref{global_lemma_s_0}.
Then $u(t)$ stays bounded in $X^0$,
and so $\lVert v(t) \rVert _{H^{1/2}}$ is bounded
by the same constant independent on the time interval.
Hence from the Brezis-Gallouet limiting embedding
\eqref{Brezis_inequality} one deduces
\[
	\lVert v(t) \rVert _{L^{\infty}}
	\lesssim 1 +
	\log \left( 2 + \lVert v(t) \rVert _{H^{s + 1/2}} \right)
\]
and applying Lemma \ref{persistence_lemma} obtain
\[
	\lVert u \rVert _{X^s}
	\leqslant
	\lVert u_0 \rVert _{X^s}
	+ C \int_0^t
	\left(
		1 +
		\log \left( 2 + \lVert u \rVert _{X^s} \right)
	\right)
	\lVert u \rVert _{X^s}
	.
\]
Introducing
\(
	y(t) = 2 + \lVert u(t) \rVert _{X^s}
\)
we arrive at the assumption of 
the Gronwall inequality lemma \ref{Gronwall_lemma}.
As a result we have the estimate
\[
	2 + \lVert u \rVert _{X^s}
	\leqslant
	\exp
	\left(
		e^{2Ct} \log \left( 2 + \lVert u_0 \rVert _{X^s} \right)
	\right)
\]
that is the first claim.

In the case $s \geqslant 1/2$ we make use of
the second inequality in Lemma \ref{persistence_lemma}
and a more standard Gronwall inequality \cite{t06}.
\end{proof}

\section{Dispersive estimate for $S_{m_d}(\pm t) f$ }
\label{free_waves_estimates}
\setcounter{equation}{0}

First we establish a lower bound for the first
and second derivatives of the function $m(r)=r\sqrt{\tanh(r)/r}$.
These estimates will be used later to derive dispersive estimates for
the free waves $S_{m_d}(\pm t) f$ using a stationary phase method.

Throughout the next three sections we use the following notation:
The Greek letter $\lambda$ denotes a dyadic number, i.e., this
variable ranges over numbers of the form $2^k$ for $k \in \Z$.
 In estimates we use $A\lesssim B$ as shorthand for $A \le CB$ and $A\ll B$ for $A\le C^{-1} B$, where $C\gg1 $ is a positive constant which is independent of dyadic numbers such as $\lambda$ and time $T$, whereas $A\sim B$ means $B\lesssim A\lesssim B$. 
 
\begin{lemma}\label{lm-mest}
Set $m(r)=r K(r)$, where $K(r)=\sqrt{\tanh(r)/r}$. Then for $r > 0$,
\begin{align}
\label{m'est} 
0 < m'(r)& \sim \angles{r}^{-1/2},
 \\
0 < - m''(r) &\sim  r \angles{r}^{-5/2}.
\end{align}
\end{lemma}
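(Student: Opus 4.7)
The plan is to parameterize via the squared function $g(r) := m(r)^2 = r \tanh r$, which avoids carrying a square root through the differentiation. From $m = \sqrt{g}$ one has
\begin{equation*}
	m'(r) = \frac{g'(r)}{2\sqrt{g(r)}},
	\qquad
	m''(r) = \frac{2\, g(r)\, g''(r) - g'(r)^2}{4\, g(r)^{3/2}},
\end{equation*}
with $g'(r) = \tanh r + r\operatorname{sech}^2 r$ and $g''(r) = 2\operatorname{sech}^2 r\,(1 - r \tanh r)$. Positivity $m'(r) > 0$ is immediate since $g'(r) > 0$ for $r > 0$.

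For the sign of $m''$ the key is the algebraic identity
\begin{equation*}
	2\, g(r)\, g''(r) - g'(r)^2
	=
	-\bigl(\tanh r - r\operatorname{sech}^2 r\bigr)^2 - 4\, r^2 \tanh^2 r \operatorname{sech}^2 r,
\end{equation*}
which I would verify by direct expansion of both sides as polynomials in $\tanh r$, $\operatorname{sech}^2 r$ and $r$. The right-hand side is manifestly strictly negative for $r > 0$, so $m''(r) < 0$ on $(0, \infty)$.

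For the comparability bounds I would split into two regimes. In the low-frequency regime $r \in (0, 1]$, the Taylor expansion $\tanh r = r - r^3/3 + O(r^5)$ gives $g(r) \sim r^2$, $g'(r) \sim 2r$, and, after cancellation of leading terms, $|2 g g'' - (g')^2| \sim r^4$, whence $m'(r) \sim 1$ and $-m''(r) \sim r$. This matches $\angles{r}^{-1/2} \sim 1$ and $r \angles{r}^{-5/2} \sim r$ since $\angles{r} \sim 1$ there. In the high-frequency regime $r \geqslant 1$, we have $\tanh r \in [\tanh 1, 1)$ while $\operatorname{sech}^2 r$ decays exponentially, so $g(r) \sim r$, $g'(r) = 1 + O\!\left(r e^{-2r}\right) \sim 1$, and $|2 g g'' - (g')^2| \to 1$, yielding $m'(r) \sim r^{-1/2}$ and $-m''(r) \sim r^{-3/2}$, again matching $\angles{r}^{-1/2}$ and $r \angles{r}^{-5/2}$. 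Continuity on any compact subinterval of $(0, \infty)$ bridges the two regimes.

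The main obstacle is pinning down the sign of $m''$: the direct formula for $m''$ is an indefinite sum of positive and negative pieces with no manifest dominance, and one must massage it into a sum of squares as above. The remaining steps, namely the asymptotic matching as $r \to 0^+$ and $r \to \infty$, reduce to routine expansions once the structure of $2 g g'' - (g')^2$ is in hand.
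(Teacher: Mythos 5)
Your argument is correct and is essentially the paper's proof in a different packaging: your identity $2gg''-(g')^2=-(\tanh r-r\operatorname{sech}^2 r)^2-4r^2\tanh^2 r\,\operatorname{sech}^2 r$ is exactly $4g^{3/2}$ times the paper's manifestly negative expression $m''(r)=-\tfrac{1}{4r}\bigl[4r^2K(r)\operatorname{sech}^2 r+K^{-3}(r)\bigl(K^2(r)-\operatorname{sech}^2 r\bigr)^2\bigr]$, so the sum-of-squares step and the sign of $m''$ coincide with theirs. The only real difference is that you differentiate $g=m^2=r\tanh r$ instead of computing $K'$ and $K''$, and your two-regime asymptotics (Taylor expansion at $0$, exponential decay of $\operatorname{sech}$ at infinity) correspond to the paper's estimates of $E(r)=\bigl(K^2(r)-\operatorname{sech}^2 r\bigr)/\operatorname{sech}^2 r$.
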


\begin{proof}
First note that
\begin{align*}
K'(r)&= \frac{ r \text{sech}^2(r)-\tanh(r)}{2 r^2 K(r)},
\\
K''(r)&= -\frac{ \tanh(r)\text{sech}^2(r)}{ r K(r)}- \frac{ \left( r \text{sech}^2(r)-\tanh(r) \right)}{ r^3K(r)}- \frac{ \left( r \text{sech}^2(r)-\tanh(r) \right)^2  }{4 r^4 K^3(r)}
\end{align*}
which imply
\begin{align*}
m'(r)&= K(r)+rK'(r)=\frac{K(r)}{2}+ \frac{\text{sech}^2(r)}{2K(r)} > 0,
\\
m''(r)&= 2K'(r)+ rK''(r)
\\
&=-\frac{ \tanh(r)\text{sech}^2(r)}{ K(r)}- \frac{ \left( r \text{sech}^2(r)-\tanh(r) \right)^2  }{4 r^3 K^3(r)}
\\
&=-\frac 1{4r} \left[    4r^2K\text{sech}^2(r) +  K^{-3}(r)\left( K^2(r)-\text{sech}^2(r) \right)^2 \right].
\end{align*}

Now let us estimate $m'(r)$.
One can assume without loss of generality
that $r > 0$.
Since $$K(r)=\sqrt{\tanh(r)/r}\sim  \angles{r}^{-1/2} \quad \text{and} \quad  \text{sech}(r)\sim e^{-r}$$
we have
\begin{equation}\label{KS-Est}
m'(r) \sim  \angles{r}^{-1/2}+  \angles{r}^{1/2} e^{-2r}  \sim \angles{r}^{-1/2}.
\end{equation}

Next we estimate $m''(r)$.  
We can write
\begin{align*}
K^2(r)-\text{sech}^2(r)=  E(r)\text{sech}^2(r),
\end{align*}
where
$$E(r)=\frac{e^{2r}- e^{-2r}-4r} {4r}.$$
Now if $0<r<1/2$ we have
$$E(r)=\frac{1}{2r}\sum_{n=0}^\infty \frac{(2r)^{2n+3}}{(2n+3)!} =4Cr^2, $$
where 
$C:=C(r)=\sum_{n=0}^\infty \frac{(2r)^{2n}}{(2n+3)!} <\infty$. If $r\ge 1/2$  we have
$$
E(r)=\frac{e^{2r}} {4r} [1-e^{-4r}-4r e^{-2r} ] \sim \frac{e^{2r}} {r}.
$$
 Therefore, 
\begin{equation} \label{E-est}
E(r) \sim 
\begin{cases}
& r^{2}\qquad \text{if} \ 0<r<1/2,
\\
& r^{-1}e^{2r}  \qquad \text{if} \ r\ge 1/2.
\end{cases}
\end{equation}
Then using \eqref{KS-Est} and \eqref{E-est} we obtain
\begin{align*}
|m''(r)|
&=\frac 1{4|r|} \left[    4r^2K(r)\text{sech}^2(r) +  K^{-3}(r)E^2(r) \text{sech}^4(r) \right]
\\
&\sim |r|^{-1} \left[   r^2 \angles{r}^{-\frac12}  e^{-2r}+  \angles{r}^{\frac32}E^2(r) e^{-4r} \right]
\\
&\sim |r| \angles{r}^{-5/2} .
\end{align*}

\end{proof}

Next we use the estimates on the derivatives of $m(r)$ in Lemma \ref{lm-mest} and stationary phase method to derive a frequency localized dispersive estimate for the free waves $S_{m}(\pm t) f$. 
To this end, we consider an even function $ \chi \in C_0^\infty((-2, 2))$ such that 
$\chi (s)=1 $ if $|s|\le 1$. Let
\begin{align*}
\beta(s)
=\chi\left(s\right)-\chi \left(2s\right), \qquad \beta_{\lambda}(s):=\beta\left(s/\lambda\right),
 \end{align*}
 where $\lambda \in 2^\Z$ is dyadic.
 Thus, $\supp \beta_\lambda \subset 
\{ s\in \R: \lambda/ 2 \leqslant |s| \leqslant 2\lambda \}$.
 Now define the frequency projection $P_\lambda$ by
\begin{align*}
\widehat{P_{\lambda} f}(\xi)  &=\begin{cases}
& \chi (|\xi|)\widehat { f}(\xi) \quad \text{if} \ \lambda=1,
\\
& \beta_\lambda(|\xi|)\widehat { f}(\xi) \quad \text{if} \ \lambda>1.
\end{cases}
 \end{align*}
We write $f_\lambda:=P_\lambda f $.
Then $f=\sum_{\lambda \ge 1 } f_\lambda$.

The following is the key dispersive estimate that will be crucial in the proof of Theorem \ref{lwpthm} and Theorem \ref{lwpthm2d-1}.
\begin{lemma}[Localized dispersive estimate]
\label{lm-dispest}
Let $\lambda >1$ and $d\in \{1, 2\}$. Then we have the estimate
\[
\| S_{m_d}(\pm t) f_\lambda  \|_{L^\infty_x(\R^d)} \lesssim  \lambda^{3d/4} |t|^{-d/2}  \|f\|_{L_x^1(\R^d)}.
\]
\end{lemma}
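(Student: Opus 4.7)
By Young's inequality applied to the Fourier representation $S_{m_d}(\pm t) f_\lambda = K_{\lambda,t} \ast f$, the claim reduces to the pointwise kernel bound $\|K_{\lambda,t}\|_{L^\infty_x} \lesssim \lambda^{3d/4}|t|^{-d/2}$, where
\[
K_{\lambda,t}(x) = (2\pi)^{-d} \int_{\R^d} e^{i(x \cdot \xi \mp t m_d(|\xi|))} \beta_\lambda(|\xi|) \, d\xi.
\]
The trivial bound $|K_{\lambda,t}(x)| \lesssim \lambda^d$ already handles $|t| \lesssim \lambda^{-1/2}$, so I may assume $|t| \gtrsim \lambda^{-1/2}$. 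Lemma \ref{lm-mest} gives, on $\supp \beta_\lambda$ (where $|\xi| \sim \lambda > 1$), the sharp sizes $|m_d'(r)| \sim \lambda^{-1/2}$ and $|m_d''(r)| \sim \lambda^{-3/2}$.

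For $d = 1$, the phase $\phi(\xi) = x\xi \mp t m_1(\xi)$ satisfies $|\phi''(\xi)| \gtrsim |t|\lambda^{-3/2}$ uniformly in $x$, while the dyadic bump obeys $\|\beta_\lambda\|_{L^\infty} + \|\beta_\lambda'\|_{L^1} \lesssim 1$. Applying the van der Corput lemma (second-derivative form) separately on each of the two components $\pm[\lambda/2, 2\lambda]$ of $\supp \beta_\lambda$ (where $\phi''$ has constant sign) yields
\[
|K_{\lambda,t}(x)| \lesssim (|t|\lambda^{-3/2})^{-1/2} = |t|^{-1/2}\lambda^{3/4}.
\]

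For $d = 2$, I would exploit the radial symmetry by passing to polar coordinates. The identity $\int_{S^1} e^{i r |x| \cos \theta}\, d\theta = 2\pi J_0(r|x|)$ reduces the kernel to
\[
K_{\lambda,t}(x) = (2\pi)^{-1}\int_0^\infty e^{\mp i t m_2(r)} \beta_\lambda(r) r J_0(r|x|) \, dr,
\]
and the analysis splits on the size of $\lambda|x|$. When $\lambda|x| \lesssim 1$, use $|J_0| \lesssim 1$ and integrate by parts once in $r$ via the operator $(\mp i t m_2'(r))^{-1}\partial_r$, which is well-defined since $m_2'(r) \neq 0$; this pays a factor $|t|^{-1}$, and the resulting integrand has size $\sqrt\lambda$ on a support of length $\sim \lambda$ (using $1/m_2' \sim \sqrt\lambda$ and $|m_2''|/|m_2'|^2 \sim \lambda^{-1/2}$), giving $|K_{\lambda,t}(x)| \lesssim |t|^{-1}\lambda^{3/2}$. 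When $\lambda|x| \gtrsim 1$, substitute the Hankel asymptotic $J_0(s) = \sum_\pm c_\pm s^{-1/2} e^{\pm is} + O(s^{-3/2})$ to express $K_{\lambda,t}(x)$ as $|x|^{-1/2}$ times two 1D oscillatory integrals with phase $\phi_\pm(r) = \mp t m_2(r) \pm r|x|$, plus an acceptable remainder. In the stationary zone $|x| \gtrsim |t|\lambda^{-1/2}$, van der Corput applied with $|\phi_\pm''| \sim |t|\lambda^{-3/2}$ and amplitude $\sim \sqrt\lambda$, combined with $|x|^{-1/2} \lesssim \lambda^{1/4}|t|^{-1/2}$, recovers $|t|^{-1}\lambda^{3/2}$; in the non-stationary zone $|x| \ll |t|\lambda^{-1/2}$ one has $|\phi_\pm'| \gtrsim |t|/\sqrt\lambda$, and a single integration by parts again delivers the same bound.

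The main obstacle is precisely this case analysis in two dimensions: a pointwise bound uniform in $x$ requires tracking the transition between small and large arguments of $J_0$, and within the large-argument regime the transition between the stationary and non-stationary zones of the reduced 1D phase. That all four pieces produce the common quantity $|t|^{-1}\lambda^{3/2}$ reflects the underlying 2D stationary phase in which both Hessian eigenvalues of the radial phase, $tm_2''(r)$ and $tm_2'(r)/r$, are of order $|t|\lambda^{-3/2}$, and the bookkeeping succeeds precisely because of the sharp two-sided estimates in Lemma \ref{lm-mest}.
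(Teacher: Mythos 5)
Your overall strategy is the same as the paper's: reduce by Young's inequality to a pointwise kernel bound, dispose of $|t|\lesssim \lambda^{-1/2}$ by the trivial estimate $\lambda^d$, and in 2d pass to polar coordinates and the Bessel function $J_0$, splitting on the size of $\lambda|x|$. In 1d your invocation of van der Corput's lemma (second-derivative form, with $|\phi''|=|t|\,|m_1''|\sim |t|\lambda^{-3/2}$ on $|\xi|\sim\lambda$ and $\|\beta_\lambda\|_{L^\infty}+\|\beta_\lambda'\|_{L^1}\lesssim 1$) is correct and is in fact tidier than the paper's argument, which reproves that bound by hand via an explicit splitting at distance $\delta=t^{-1/2}\lambda^{-1/4}$ from the critical point; the same remark applies to your treatment of the main Hankel terms in 2d, and your case $\lambda|x|\lesssim 1$ matches the paper's.

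The one genuine gap is the phrase \emph{``plus an acceptable remainder''} in 2d. If the $O(s^{-3/2})$ error in the Hankel expansion is estimated by absolute values, its contribution to the kernel is only bounded by $\int_{r\sim\lambda} r\,(r|x|)^{-3/2}\,dr \sim \lambda^{1/2}|x|^{-3/2}$, which is \emph{not} $\lesssim \lambda^{3/2}|t|^{-1}$ throughout the regime $\lambda|x|\gtrsim 1$: for instance $|x|\sim\lambda^{-1}$ with $|t|$ large gives $\lambda^{2}\gg\lambda^{3/2}|t|^{-1}$. So the remainder cannot be discarded non-oscillatorily. To close the argument you must either (i) keep it oscillatory and integrate by parts once in $r$ against the phase $t\,m_2(r)$ (legitimate since $m_2'>0$, so this phase has no critical points), which requires symbol-type derivative bounds $|E^{(j)}(s)|\lesssim s^{-3/2}$ on the error and not merely its size, or (ii) do what the paper does: use the exact representation $J_0(s)=e^{is}h(s)+e^{-is}\overline{h(s)}$ with $|\partial^j h(s)|\lesssim \langle s\rangle^{-1/2-j}$ for all $j\ge 0$, so there is no remainder at all and both pieces are handled as oscillatory integrals. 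With either fix your proof goes through; as written, that step would fail. (A cosmetic point: writing $\phi_\pm(r)=\mp t m_2(r)\pm r|x|$ correlates the two signs, whereas for a fixed semigroup sign the two phases are $\mp t m_2(r)+r|x|$ and $\mp t m_2(r)-r|x|$, only one of which can be stationary; since your bounds $|\phi''|\sim|t|\lambda^{-3/2}$ and, off the stationary zone, $|\phi'|\gtrsim |t|\lambda^{-1/2}$ hold for both, this does not affect the outcome.)
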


Interpolating this with the trivial bound (by Plancherel)
\[
\| S_{m_d}(\pm t) f_\lambda  \|_{L^2_x(\R^d)}  \le \|f\|_{L_x^2(\R^d)},
\]
we obtain the following.

\begin{corollary}\label{dispcor} Assuming $\lambda >1$, $d\in \{1, 2\}$ and $2 \le r \le \infty$, we have
\[
\| S_{m_d}(\pm t) f_\lambda  \|_{L^r_x(\R^d)} \lesssim  \left( \lambda^{3d/4} |t|^{-d/2} \right)^{1-2/r}  \|f\|_{L_x^{r'}(\R^d)}.
\]
\end{corollary}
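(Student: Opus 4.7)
The plan is to represent the semigroup action as convolution with an oscillatory kernel and to estimate that kernel in $L^\infty_x$. Indeed,
\[
   S_{m_d}(\pm t) f_\lambda = K_\lambda^\pm(\cdot, t) * f,
   \qquad
   K_\lambda^\pm(x, t) = (2\pi)^{-d} \int_{\R^d} e^{i(x \cdot \xi \, \mp \, t\, m_d(\xi))}\, \beta_\lambda(|\xi|)\, d\xi,
\]
so by Young's convolution inequality the lemma reduces to the pointwise bound $\| K_\lambda^\pm(\cdot, t) \|_{L^\infty_x} \lesssim \lambda^{3d/4}|t|^{-d/2}$. I would pull $t$ out of the phase by writing $i(x \cdot \xi \mp t\, m_d(\xi)) = it\, \varphi_y(\xi)$ with $y = x/t$ and $\varphi_y(\xi) = y \cdot \xi \mp m_d(\xi)$, so that the kernel becomes a classical oscillatory integral to which I apply van der Corput / stationary phase, feeding in the derivative estimates on $m$ from Lemma \ref{lm-mest}.

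In the one-dimensional case, since $K_1$ is even the function $m_1$ is odd and Lemma \ref{lm-mest} yields $|\varphi_y''(\xi)| = |m_1''(\xi)| \sim \lambda^{-3/2}$ on each half $\xi \sim \pm\lambda$ of the support of $\beta_\lambda(|\cdot|)$; the amplitude itself has total variation $O(1)$ uniformly in $\lambda$. The second-derivative form of van der Corput's lemma then produces
\[
   |K_\lambda^\pm(x, t)| \lesssim (|t|\, \lambda^{-3/2})^{-1/2} = \lambda^{3/4}|t|^{-1/2},
\]
uniformly in $x$, which is the claim for $d = 1$.

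For $d = 2$, since $m_2(\xi) = m_1(|\xi|)$, a critical point of $\varphi_y$ must satisfy $y = \pm m_1'(|\xi|) \xi/|\xi|$, which forces $|y| = m_1'(|\xi|) \sim \lambda^{-1/2}$ on the support. Away from this critical annulus $|\nabla \varphi_y|$ is bounded below by $\max(|y|, \lambda^{-1/2})$, and repeated integration by parts in $\xi$ yields much faster decay than claimed. At a critical point, using the standard decomposition of the Hessian of a radial function, $\nabla^2 \varphi_y$ is diagonal in the radial/angular frame with eigenvalues $\mp m_1''(|\xi|)$ and $\mp m_1'(|\xi|)/|\xi|$, so by Lemma \ref{lm-mest},
\[
   |\det \nabla^2 \varphi_y|
   \sim
   |m_1''(|\xi|)| \cdot m_1'(|\xi|)/|\xi|
   \sim
   \lambda^{-3/2}\cdot \lambda^{-1/2}/\lambda
   =
   \lambda^{-3}.
\]
Two-dimensional stationary phase then gives $|K_\lambda^\pm(x, t)| \lesssim |t|^{-1} |\det \nabla^2 \varphi_y|^{-1/2} \sim \lambda^{3/2}|t|^{-1}$, as required.

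The main technical obstacle is maintaining uniform constants in the dyadic parameter $\lambda$, especially in two dimensions where the subordinate terms in the stationary-phase asymptotic involve higher derivatives of both the amplitude and the phase. The cleanest way to arrange uniformity is to rescale $\xi = \lambda \eta$: the amplitude $\beta_\lambda(|\lambda \eta|) = \beta(|\eta|)$ becomes $\lambda$-independent on $|\eta| \sim 1$, and the rescaled phase symbol $\lambda^{-1/2}\, m_d(\lambda \eta)$ has all derivatives bounded uniformly in $\lambda \ge 1$ (reflecting the large-frequency asymptotic $m(r) \approx r^{1/2}$ of the Whitham dispersion). After rescaling, all of the $\lambda$-dependence is concentrated in a single prefactor $\lambda^{3d/4}$ coming from the inverse square root of the rescaled Hessian determinant, and the van der Corput / stationary phase bounds apply with absolute constants.
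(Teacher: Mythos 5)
Your argument, as written, establishes only the endpoint $r=\infty$. Young's inequality combined with the pointwise bound $\Vert K^\pm_\lambda(\cdot,t)\Vert_{L^\infty_x}\lesssim \lambda^{3d/4}|t|^{-d/2}$ gives exactly $\Vert S_{m_d}(\pm t)f_\lambda\Vert_{L^\infty}\lesssim \lambda^{3d/4}|t|^{-d/2}\Vert f\Vert_{L^1}$, i.e.\ the statement of Lemma \ref{lm-dispest}; it does \emph{not} reduce the claimed $L^{r'}\to L^r$ bound for $2\le r<\infty$ to that kernel estimate. For intermediate $r$, Young would require control of $\Vert K^\pm_\lambda(\cdot,t)\Vert_{L^{r/2}_x}$, which neither follows from the $L^\infty$ bound nor produces the stated constant: already at $r=2$ it would demand an $L^1_x$ bound on the kernel uniformly in $t$, whereas the correct $r=2$ statement is simply Plancherel, $\Vert S_{m_d}(\pm t)f_\lambda\Vert_{L^2}\le\Vert f\Vert_{L^2}$, since the multiplier has modulus at most one. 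The missing step---which is the entire content of the paper's proof of this corollary---is Riesz--Thorin interpolation between the $L^1\to L^\infty$ dispersive bound and this trivial $L^2\to L^2$ bound; that is what yields the factor $\bigl(\lambda^{3d/4}|t|^{-d/2}\bigr)^{1-2/r}$ for all $2\le r\le\infty$.

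Concerning the kernel bound itself, you are in effect re-proving Lemma \ref{lm-dispest} rather than invoking it, by van der Corput in $1$d and nondegenerate stationary phase after rescaling in $2$d; this is a legitimate alternative to the paper's hands-on analysis (explicit integration by parts with a $\delta$-neighbourhood of the stationary point in $1$d, and a Bessel-function decomposition of the angular integral in $2$d). Your $1$d step is sound: on $|\xi|\sim\lambda$ one has $|m_1''(\xi)|\sim\lambda^{-3/2}$ by Lemma \ref{lm-mest} and the amplitude has total variation $O(1)$, so van der Corput gives $\lambda^{3/4}|t|^{-1/2}$. In $2$d, however, both the nonstationary integration by parts and the stationary phase expansion with constants uniform in $\lambda$ and in $y=x/t$ require bounds on third (and higher) derivatives of $m_2$, which Lemma \ref{lm-mest} does not supply; your rescaling remark (that $\lambda^{-1/2}m(\lambda\,\cdot\,)$ is uniformly smooth on $|\eta|\sim 1$) indicates how to obtain them, but this would have to be proved before the constants can be declared uniform. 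You should also dispose of the regime $|t|\lesssim\lambda^{-1/2}$ by the trivial bound $|K^\pm_\lambda|\lesssim\lambda^{d}$, as the paper does in \eqref{tlower1}, since stationary phase only helps when the effective large parameter $|t|\lambda^{1/2}$ is large.
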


The remainder of this section is devoted to the proof of Lemma \ref{lm-dispest}. It suffices to prove the estimate for positive times:
\begin{equation} 
\label{disest1}
\| S_{m_d}( t) f_\lambda  \|_{L^\infty_x(\R^d)} \lesssim \lambda^{3d/4} t^{-d/2 }     \|f\|_{L_x^1(\R^d) } \quad (t>0).
\end{equation}
One can write
\[
\left[ S_{m_d}( t) f_\lambda \right](x) = \mathcal F_x^{-1}\left[e^{ it m_d(\xi) } \beta_\lambda(|\xi|) \hat f\right](x)
= ( I_{\lambda, t} \ast f)(x),
\]
where
\begin{equation}\label{Idef}
 I_{\lambda, t} (x)=\mathcal F_x^{-1}\left[e^{i t m_d(\xi) } \beta_\lambda(|\xi|) \right](x) = \int_{\R^d} e^{ix\cdot \xi+ it m_d(\xi) }  \beta_\lambda(|\xi|)  \, d\xi 
=\lambda^d \int_{\R^d} e^{i \lambda x\cdot \xi+ it {m_d}(\lambda \xi)}  \beta(|\xi|) \, d\xi .
\end{equation}
Then by Young's inequality
\begin{equation}\label{younginq}
\| S_{m_d}(t) f_\lambda  \|_{L^\infty_x(\R^d)} \le \| I_{\lambda, t} \|_{L^\infty_x(\R^d)}  \|f\|_{L_x^1(\R^d)},
\end{equation}
so \eqref{disest1} reduces to proving
\begin{equation}
\label{Iest-1}
\|I_{\lambda, t} \|_{L^\infty_x(\R^d)}  \lesssim \lambda^{3d/4}  t^{-d/2 }   .
\end{equation}
But clearly,
\begin{equation*}
\|I_{\lambda, t} \|_{L^\infty_x(\R^d)} 
\lesssim  \lambda^d,
\end{equation*}
so in view of \eqref{Iest-1} it is enough to consider the case where 
\begin{equation}
\label{tlower1}
     \lambda ^{3d/4} t^{-d/2}\ll \lambda^d \Leftrightarrow  t \gg     \lambda ^{-1/2} .
 \end{equation}
The proof of \eqref{Iest-1} in this case is given in the following two subsections, first for space dimension $d=1$ and then for $d=2$. 
\subsection{Proof of \eqref{Iest-1} when $d=1$  }
In one dimension we have
\begin{align*}
I_{\lambda, t} (x)
=\lambda \int_{\R} e^{i t\phi_{\lambda }(\xi) }  \beta(|\xi|) \, d\xi,
\end{align*}
where
$$\phi_{\lambda}(\xi):= \lambda  \xi x/t  +  m_1(\lambda \xi)=\lambda  \xi  x/t     +  \lambda \xi K_1(  \lambda \xi ).$$
Note that $m_1(\xi)=m(\xi)$, where $m$ is as in Lemma \ref{lm-mest}.
Now since the function $\phi_{\lambda}$ is odd we can write
\begin{align*}
I_{\lambda, t } (x)
 &=2\lambda \int_{0}^\infty \cos ( t\phi_{\lambda }(\xi) )  \beta(\xi) \, d\xi=2\lambda \int_{1/2}^2 \cos ( t\phi_{\lambda }(\xi) )  \beta(\xi) \, d\xi.
\end{align*}
Since
\begin{align}
\label{ph'}
\phi_{\lambda}'(\xi)&=  \lambda  \left[  x/t  +  m'(\lambda \xi)\right] ,
\\
\label{ph''}
  \phi_{\lambda }''(\xi)&=     \lambda^2 m''(\lambda \xi),
\end{align}
we see from Lemma \ref{lm-mest} that
\begin{equation}
\label{Est-phi''-1}
0 < - \phi_{\lambda }''(\xi) = - \lambda^2 m''(\lambda \xi) \sim   \lambda^3\angles{  \lambda }^{-5/2}\sim \lambda^{1/2}
\end{equation}
for $\xi \in [1/2,2]$. Here we used also the assumption $\lambda \ge 1$.

\subsubsection{Non-stationary contribution} This is the case when either (i) $x\ge 0$ or (ii) $x<0$ and $-x/t \ll    \lambda ^{-1/2}$ or $-x/t \gg   \lambda ^{-1/2}$.
Then since $m'(\lambda \xi)) $ is positive and comparable to $\angles{\lambda\xi}^{-1/2}$ (Lemma \ref{lm-mest}), we see from \eqref{ph'} that
\begin{equation}
\label{Est-phi'-1}
|\phi_{\lambda}'(\xi)|\gtrsim   \lambda ^{1/2}
\end{equation}
for $\xi \in [1/2,2]$.
Integration by parts yields
\begin{equation}
\label{Ilam-int}
\begin{split}
I_{\lambda, t} (x)
&= 2\lambda t^{-1}\int_{1/2}^2  \frac{d}{d\xi}\left[\sin ( t\phi_{\lambda}(\xi) ) \right] \left[ \phi_\lambda'(\xi) \right]^{-1}\beta(\xi) \, d\xi
\\
&=- 2\lambda t^{-1} \int_{1/2}^2  \sin \left(t\phi_{\lambda}(\xi) \right) [\phi_{\lambda}'(\xi)]^{-2}\left[\beta'(\xi)\phi_{\lambda}'(\xi)- 
\beta(\xi) \phi_{\lambda}''(\xi) \right]  \, d\xi,
\end{split}
\end{equation}
hence \eqref{Est-phi''-1} and \eqref{Est-phi'-1} allow us to estimate
\begin{equation}\label{Iest-3}
\begin{split}
|I_{\lambda, t} (x)|
&\le 2\lambda t^{-1}\int_{1/2}^2  |\phi_{\lambda}'(\xi)|^{-2} \left[|\beta'(\xi)||\phi_{\lambda}'(\xi)|+ |\beta(\xi)||\phi_{\lambda}''(\xi)| \right]  \, d\xi  
\\
&\lesssim \lambda t^{-1} \left[  \lambda^{-1/2}  +   \lambda^{-1} \cdot \lambda^{1/2}  \right]
\\
&\sim    \lambda^{1/2}t^{-1}
\\
&\ll \lambda^{3/4} t^{-1/2},
\end{split}
\end{equation}
where the last step follows by the assumption \eqref{tlower1}. This concludes the proof of the desired estimate \eqref{Iest-1} with $d=1$ in the non-stationary case.

\subsubsection{ Stationary contribution: $x < 0$ and $-x/t \sim \lambda ^{-1/2}$ }\label{1Dstationary} In this case, we see from \eqref{ph'} that $\phi_\lambda'(\xi)$ may vanish, but this can happen for at most one point $\xi \in [1/2,2]$, since $\xi \mapsto \phi_\lambda'(\xi)$ is strictly decreasing for $\xi > 0$ (indeed, $\phi_{\lambda }''(\xi)$ is negative, by Lemma \ref{lm-mest}). We consider first the case where there exists such a point in $[1/2,2]$. 

So suppose first that $\phi_\lambda'(\xi_0) = 0$ for some $\xi_0 \in [1/2,2]$. Define
\[
  \delta = t^{-1/2} \lambda^{-1/4}.
\]
Note that $\delta \ll 1$ by \eqref{tlower1}. Assuming for the moment that $1/2 \le \xi_0-\delta$ and $\xi_0+\delta \le 2$, we decompose the integral as
\begin{equation}
\label{Idecomp}
I_{\lambda, t}(x)
= 2\lambda \left( \int_{1/2}^{\xi_0-\delta} + \int_{{\xi_0-\delta}}^{{\xi_0+\delta}}+  \int_{{\xi_0+\delta}}^2 \right)\cos (  t\phi_{\lambda}(\xi) )  \beta(\xi) \, d\xi.
\end{equation}
To estimate the first integral, we use integration by parts to get
\begin{align*}
&\left| \int_{1/2}^{\xi_0-\delta} \cos (t \phi_{\lambda}(\xi) )  \beta(\xi) \, d\xi \right|
\\
& \quad \le t^{-1} \left| \left[  \sin ( t \phi_{\lambda}(\xi) )  \frac{\beta(\xi) }{\phi_{\lambda}'(\xi) } \right]_{\xi=1/2}^{\xi=\xi_0-\delta} \right| 
 +  t^{-1} \left|\int_{1/2}^{\xi_0-\delta}  \sin ( t \phi_{\lambda}(\xi) ) 
 \left( \frac{ \beta'(\xi)}{ \phi_{\lambda}'(\xi) } - \frac{ \beta(\xi)\phi_{\lambda}''(\xi)}{ [\phi_{\lambda}'(\xi)]^2 } \right) \, d\xi \right|.
\end{align*}
Since $\phi_\lambda'$ is positive and decreasing in the interval $[1/2,\xi_0-\delta]$, and since $\phi_{\lambda }''$ is negative, we can continue the estimate by
\begin{align*}
  &\lesssim  t^{-1} \left( \frac{1}{\phi_{\lambda}'(\xi_0-\delta)}
  + \int_{1/2}^{\xi_0-\delta} \frac{ -\phi_{\lambda}''(\xi)}{ [\phi_{\lambda}'(\xi)]^2 } \, d\xi\right)
  \\
  &=  t^{-1} \left( \frac{1}{\phi_{\lambda}'(\xi_0-\delta)}
  + \int_{1/2}^{\xi_0-\delta} \frac{d}{d\xi} \left( \frac{1}{ \phi_{\lambda}'(\xi) } \right) \, d\xi\right)
  \\
  &\le 2  t^{-1} \frac{1}{\phi_{\lambda}'(\xi_0-\delta)}.
\end{align*}
But by the mean value theorem and \eqref{Est-phi''-1},
\[
 |\phi_{\lambda}'(\xi)|=  |\phi_{\lambda}'(\xi)-\phi_{\lambda}'(\xi_0)|
 \sim \lambda ^{1/2} |\xi-\xi_0|
 \quad \text{for $\xi \in [1/2,2]$},
\]
so we we conclude that
\[
  \left| \int_{1/2}^{\xi_0-\delta} \cos (t \phi_{\lambda}(\xi) )  \beta(\xi) \, d\xi \right|
  \lesssim
   t^{-1} \lambda^{-1/2} \delta^{-1}
   =
   t^{-1/2} \lambda^{-1/4},
\]
by the definition of $\delta$ above. The third integral in \eqref{Idecomp} can be estimated in a similar way, and satisfies the same estimate, while the second integral \eqref{Idecomp} is trivially estimated as 
\begin{align*}
\int_{\xi_0-\delta}^{\xi_0+\delta} \cos ( t\phi_{\lambda}(\xi) )  \beta(\xi) \, d\xi  \lesssim \delta =  t^{-1/2} \lambda^{-1/4}.
\end{align*}

Summing up the three contributions, we conclude that the desired estimate holds,
\[
|I_{\lambda, t}(x)| \lesssim  \lambda^{3/4}  t^{-1/2},
\]
in the stationary case under the assumptions that $\phi_\lambda'(\xi_0) = 0$ for some $\xi_0 \in [1/2,2]$, and that $1/2 \le \xi_0-\delta$ and $\xi_0+\delta \le 2$. If $1/2 > \xi_0-\delta$ or $\xi_0+\delta > 2$, the above argument is easily modified. For example, if $\xi_0+\delta > 2$, we split the integral as $\int_{1/2}^{\xi_0-\delta} + \int_{\xi_0-\delta}^2$ instead; the first integral is then treated as above and the second is trivially $O(\delta)$.

It remains to prove the estimate when the function $\phi_\lambda'$ has no zero in $[1/2,2]$, so it is either positive or negative everywhere in that interval. Since the arguments for these two cases are similar, we just treat the case where $\phi_\lambda' < 0$ in $[1/2,2]$. Then we split the integral as
\[
I_{\lambda, t}(x)
= 2\lambda \left( \int_{1/2}^{1/2+\delta} + \int_{1/2+\delta}^{2-\delta} + \int_{2-\delta}^{2} \right)\cos (  t\phi_{\lambda}(\xi) )  \beta(\xi) \, d\xi.
\]
The first and third integrals are trivially dominated in absolute value by $\delta$, while for the second integral we use integration by parts, estimating
\begin{align*}
\left| \int_{1/2+\delta}^{2-\delta} \cos (t \phi_{\lambda}(\xi) )  \beta(\xi) \, d\xi \right|
&\lesssim  t^{-1} \left( \frac{1}{-\phi_{\lambda}'(1/2+\delta)}
  + \int_{1/2+\delta}^{2-\delta} \frac{ -\phi_{\lambda}''(\xi)}{ [\phi_{\lambda}'(\xi)]^2 } \, d\xi\right)
  \\
  &=  t^{-1} \left( \frac{1}{-\phi_{\lambda}'(1/2+\delta)}
  + \int_{1/2+\delta}^{2-\delta} \frac{d}{d\xi} \left( \frac{1}{ \phi_{\lambda}'(\xi) } \right) \, d\xi\right)
  \\
  &\le 2  t^{-1} \frac{1}{-\phi_{\lambda}'(1/2+\delta)}.
\end{align*}
Here we used the fact that $\phi_\lambda'$ is negative and decreasing in the interval $[1/2,2]$, and that $\phi_{\lambda }''$ is negative. Using the mean value theorem and the estimate \eqref{Est-phi''-1} on the second derivative, we find moreover that
\[
 - \phi_{\lambda}'(1/2+\delta) \ge \phi_{\lambda}'(1/2)  - \phi_{\lambda}'(1/2+\delta)
 \sim \lambda ^{1/2} \delta,
\]
so we conclude that
\[
  \left| \int_{1/2+\delta}^{2-\delta} \cos (t \phi_{\lambda}(\xi) )  \beta(\xi) \, d\xi \right|
  \lesssim
   t^{-1} \lambda^{-1/2} \delta^{-1}
   =
   t^{-1/2} \lambda^{-1/4},
\]
as desired.

\subsection{Proof of \eqref{Iest-1} when $d=2$  }
In two dimensions we have
$$
I_{\lambda, t} (x)=\lambda^2 \int_{\R^2} e^{i \lambda x \cdot \xi+it {m_2}(\lambda \xi)}  \beta(\xi) \, d\xi 
$$
which is the inverse Fourier transform of the radial function $\lambda^2 e^{it {m_2}(\lambda \xi)}  \beta(\xi) $, and hence $I_{\lambda, t}(x)$ is also radial.
So we may set $x =(|x|, 0)$.
Then in polar coordinates we have
\begin{align*}
 I_{\lambda, t}(x)
&=\lambda^2 \int_{0}^\infty  \int_0^{2\pi}  e^{i \lambda r |x| \cos\theta} e^{it m_2(\lambda r) } r \beta(r) \, d\theta dr.
\end{align*}
We can write 
\begin{align*}
  \int_0^{2\pi}  e^{i \lambda r |x| \cos\theta} \, d\theta &=  \int_0^{\pi}    \left(  e^{i \lambda r |x| \cos\theta} +e^{-i \lambda r |x| \cos\theta}   \right)\, d\theta 
  \\
  &=  2\int_{-1}^{1}      e^{i \lambda r |x| s} \left(1-s^2\right)^{-1/2} \, ds
  \\
  &=2\pi J_0( \lambda r |x|),
\end{align*}
where $J_k(r)$ is the Bessel function:
$$
J_k(r)=\frac{ (r/2)^k}{\Gamma(k+1/2) \sqrt{\pi}} \int_{-1}^1  e^{ir s} \left(1-s^2\right)^{k-1/2} \, ds \quad \text{for} \ k>-1/2.
$$
 Thus,
\begin{equation}\label{I-eq}
I_{\lambda, t} (x)
 =2\pi \lambda^2 \int_{1/2}^2  e^{it m(\lambda r) }J_0( \lambda r |x|) \tilde \beta(r) \, dr,
\end{equation}
 where $\tilde \beta(r) = r \beta(r)$ and $m(r)=m_2(r)$. 
 
 We shall use the following properties of $J_k(r)$ for $k>-1/2$ and $r>0$ (See   \cite[Appendix B]{G08} and \cite{S93}.)
\begin{align}
\label{Jm1}
J_k (r) &\le Cr^{k} ,
\\
\label{Jm2}
J_k(r)& \le C r^{-1/2} ,
\\
\label{Jm3}
\partial_r \left[ r^{-k} J_k(r)\right] &= -r^{-k} J_{k+1}(r)
\end{align}
Moreover, we can write
\begin{equation}
\label{J0est}
J_0(s)= e^{is} h(s)  +e^{-is}\bar h(s)
\end{equation}
for some function $h$ satisfying the estimate 
\begin{equation}
\label{h-est}
| \partial_r ^j h(r)|\le C_j \angles{r}^{-1/2-j}  \quad \text{for all} \ j\ge 0.
\end{equation}

We treat the cases $  |x|\lesssim \lambda^{-1}$ and $  |x|\gg  \lambda^{-1}$ separately. 
\subsubsection{Case 1: $  |x|\lesssim  \lambda^{-1}$}
By \eqref{Jm1} and \eqref{Jm3} we have for all $ r\in (1/2, 2)$ the estimate
\begin{equation}
\label{J0derv-est}
\left| \partial_r^j J_0( \lambda r |x|) \right|\lesssim 1  \quad \text{for $j=0,1$}.
\end{equation}
We integrate by parts \eqref{I-eq} to obtain
\begin{align*}
 I_{\lambda, t}(x)
&=-2\pi i\lambda t^{-1}  \int_{1/2}^2  \frac{d}{dr}\left\{ e^{it m (\lambda r) }\right\}  [ m'(\lambda r) ]^{-1} J_0( \lambda r |x|) \tilde\beta(r) \, dr
\\
&=2\pi i\lambda t^{-1}  \int_{1/2}^2   e^{it m (\lambda r) }[m'(\lambda r) ]^{-1} \partial_r\left[ J_0( \lambda r |x|) \tilde\beta(r) \right] \, dr
\\
 & \qquad -2\pi i\lambda t^{-1}  \int_{1/2}^2   e^{it m (\lambda r) } [m'(\lambda r) ]^{-2} \lambda m''(\lambda r) J_0( \lambda r |x|) \tilde\beta(r)  \, dr.
\end{align*}
Then applying Lemma \ref{lm-mest} 
and \eqref{J0derv-est} 
we obtain 
\begin{equation}
\label{I-est1-2d}
| I_{\lambda, t}(x)| \lesssim \lambda  t^{-1} \left( \lambda^{1/2} + \lambda^2 \cdot \lambda^{-3/2} \right)
\lesssim \lambda  ^{3/2} t^{-1} .
\end{equation}

\subsubsection{Case 2: $  |x|\gg  \lambda^{-1}$ }
Using \eqref{J0est} in \eqref{I-eq} we write
\begin{align*}
 I_{\lambda, t}(x)
 &=2\pi \lambda^2 \left\{\int_{1/2}^2  e^{it \phi^+_{\lambda} (r)  }  h(\lambda r |x|)  \tilde \beta(r) \, dr +  \int_{1/2}^2  e^{-it \phi^-_{\lambda} (r)  } \bar  h(\lambda r |x|)  \tilde\beta(r) \, dr \right\},
\end{align*}
where 
$$
\phi^\pm_{\lambda} (r)=    \lambda r|x|/t  \pm  m (\lambda r) .
$$
Set $F_{\lambda}( |x|, r) =h(\lambda r |x|)  \tilde \beta(r)$. In view of \eqref{h-est} we have 
\begin{equation}
\label{Fest}
| F_{\lambda}( |x|, r)  | +  |\partial_r  F_{\lambda}( |x|, r)  | \lesssim   (\lambda |x|)^{-1/2}
\end{equation}
for all $ r\in (1/2, 2)$, 
where we also used the fact $\lambda |x|\gg 1$.

Now 
we write 
$$
 I_{\lambda, t}(x)= I^+_{\lambda, t}(x) + I^-_{\lambda,  t}(x),
$$
where
\begin{align*}
  I^+_{\lambda, t}(x)
 &=2\pi \lambda^2 \int_{1/2}^2  e^{it \phi^+_{\lambda} (r)  } F_{\lambda}( |x|, r)  \, dr ,
 \\
 I^-_{\lambda, t}(x)&= 2\pi \lambda^2
   \int_{1/2}^2  e^{-it \phi^-_{\lambda} (r)  }\bar F_{\lambda}( |x|, r)  \, dr .
\end{align*}

Observe that
$$
\partial_r \phi^\pm_{\lambda} (r)=  \lambda  \left[ |x|/t \pm  m'(\lambda r) \right],\qquad \partial_r^2\phi^\pm_{\lambda} (r)=     \pm  \lambda^2 m''(\lambda r),
$$
and hence by Lemma \ref{lm-mest}, 
\begin{equation}
\label{phi'+:est}
|\partial_r \phi^+_{\lambda} (r)|\gtrsim  \lambda ^{1/2},
\qquad
|\partial^2_r \phi^\pm_{\lambda} (r)| \sim  \  \lambda^{1/2}
\end{equation}
for all $ r\in (1/2, 2)$, where we also used the fact that $m'$ is positive.

\subsubsection*{\underline{Estimate for  $I^+_{\lambda, t}(x)$ } }
It is easy to estimate  $I^+_{\lambda, t}(x)$ since $\partial_r \phi^+_{\lambda}(r)$ is never zero. Indeed, 
using
integration by parts we have
\begin{align*}
 I^+_{\lambda, t}(x)
&=- 2\pi i t^{-1} \lambda^2 \int_{1/2}^2 \partial_r\left[  e^{it \phi^+_{\lambda} (r)  } \right]  \left[\partial_r \phi^+_{\lambda} (r) \right]^{-1}   F_{\lambda}( |x|, r)  \, dr
\\
&=2\pi i t^{-1} \lambda^2  \int_{1/2}^2  e^{it \phi^+_{\lambda} (r)  }  \left\{
\frac{\partial_r F_{\lambda}( |x|, r)  }{\partial_r \phi^+_{\lambda} (r) } -   \frac{\partial^2_r \phi^+_{\lambda} (r)   F_{\lambda}( |x|, r)  }{\left[\partial_r \phi^+_{\lambda} (r) \right]^{2}}\right\}\, dr.
\end{align*}
Then using \eqref{Fest} and \eqref{phi'+:est} we have
\begin{equation}
\label{I-est2-2d}
| I^+_{\lambda, t}(x)|
\lesssim  t^{-1} \lambda^2 \cdot  \lambda^{-1/2} \cdot (\lambda |x|)^{-1/2}
\lesssim  \lambda  ^{3/2}   t^{-1} .
\end{equation}

\subsubsection*{\underline{Estimate for $I^-_{\lambda, t}(x)$}}
We treat the the non-stationary and stationary cases separately. In the non-stationary
case, where $ |x |/t \ll   \ \lambda ^{-1/2}$ or $|x|/t \gg   \lambda ^{-1/2}$, we have 
$$
|\partial_r \phi^-_{\lambda} (r)|\gtrsim \ \lambda^{1/2},
$$
and hence $I^-_{\lambda, t}(x)$ can be estimated in exactly the same way as $I^+_{\lambda,  t}(x)$ above. It satisfies
\begin{equation}
\label{I-est3-2d}
| I^-_{\lambda, t}(x)|
\lesssim \lambda  ^{3/2}   t^{-1} .
\end{equation}

It remains to consider the stationary case, where $ |x |/t \sim   \lambda ^{-1/2}$. Note that $\partial_r \phi^-_{\lambda}(r)$ is strictly increasing for $r > 0$, since $\partial_r^2 \phi^-_{\lambda}(r) = - \lambda^2 m''(\lambda r)$ is strictly positive, by Lemma \ref{lm-mest}. Thus there is at most one point $r_0 \in [1/2,2]$ at which $\partial_r \phi^-_{\lambda}$ vanishes. Setting as before
\[
  \delta = t^{-1/2} \lambda^{-1/4},
\]
we limit our attention to the case where there is such a point $r_0$ in $[1/2+\delta,2-\delta]$; the remaining cases are treated by straightforward modifications of the following argument, much as in the 1d case in subsection \ref{1Dstationary}.

We decompose 
\begin{equation}
\label{I-decomp2d}
 I^-_{\lambda, t}(x)
=2\pi \lambda^2 \left( \int_{1/2}^{r_0-\delta} + \int_{{r_0-\delta}}^{{r_0+\delta}}+  \int_{{r_0+\delta}}^2 \right) e^{-it \phi^-_{\lambda} (r)  } \bar F_{\lambda}( |x|, r) \, dr.
\end{equation}
Integrating by parts we write the first integral as
\begin{align*}
& \int_{1/2}^{r_0-\delta} e^{-it \phi^-_{\lambda} (r)  } \bar F_{\lambda}( |x|, r)  \, dr
\\
&\qquad = \underbrace{i t^{-1}  \left[ e^{-it \phi^-_{\lambda} (r)  } \frac{ \bar F_{\lambda}( |x|, r) }{  \partial_r \phi^-_{\lambda}(r)}  \right ]_{r=1/2  }^{r_0-\delta}}_{=:A} 
  -\underbrace{ i t^{-1}\int_{1/2}^{r_0-\delta} e^{-it \phi^-_{\lambda} (r)  } \partial_r \left(
  \frac{ \bar F_{\lambda}( |x|, r) }{ \partial_r \phi^-_{\lambda}(r)} \right) \, dr}_{=:B}.
  \end{align*}
Using \eqref{Fest} and noting that for $r \in [1/2,r_0-\delta]$, $\partial_r \phi^-_{\lambda}(r)$ is negative and increasing, while $\partial_r^2 \phi^-_{\lambda}(r)$ is positive, we find
\[
|A|  \lesssim  t^{-1}(\lambda |x|)^{-1/2} \frac{1}{|\partial_r \phi^-_{\lambda}(r_0-\delta)|}
\]
and
\begin{align*}
|B| & \lesssim   t^{-1}  (\lambda |x|)^{-1/2} \left[ \int_{1/2}^{r_0-\delta} 
    \frac{1}{|\partial_r \phi^-_{\lambda}(r)  |} \, dr + \int_{1/2}^{r_0-\delta} 
     \frac{\partial_r^2 \phi^-_{\lambda}(r)}{[\partial_r \phi^-_{\lambda}(r)]^2} \, dr \right]
    \\
    &=   t^{-1}  (\lambda |x|)^{-1/2} \left[ \int_{1/2}^{r_0-\delta} 
    \frac{1}{|\partial_r \phi^-_{\lambda}(r)  |} \, dr + \int_{1/2}^{r_0-\delta}  \partial_r \left( \frac{1}{- \partial_r \phi^-_{\lambda}(r)} \right) \, dr \right]
    \\
    &\lesssim  t^{-1}(\lambda |x|)^{-1/2} \frac{1}{|\partial_r \phi^-_{\lambda}(r_0-\delta)|}
\end{align*}
But using \eqref{phi'+:est} and the mean value theorem, we see that
\[
 |\partial_r \phi^-_{\lambda}(r_0-\delta)| =  | \partial_r \phi^-_{\lambda}(r_0-\delta)-\partial_r \phi^-_{\lambda}(r_0)| 
 \sim  \lambda ^{1/2} \delta = \lambda^{1/4} t^{-1/2}.
\]
Using also the assumption $ |x |/t \sim   \lambda ^{-1/2}$, we conclude that
\begin{align*}
\left| \int_{1/2}^{r_0-\delta} e^{it \phi^-_{\lambda} (r)  } \bar F_{\lambda}( |x|, r)  \, dr\right| &\le |A|+ |B|
\\
&\lesssim  t^{-1} (\lambda |x|)^{-1/2}  \lambda^{-1/4}  t^{1/2}  
\\
&\sim  t^{-1} (\lambda^{1/2}t)^{-1/2}  \lambda^{-1/4}  t^{1/2}
\\
& =    t^{-1} \lambda^{-1/2}  .
\end{align*}
The third integral in \eqref{I-decomp2d} can also be estimated in a similar way, and satisfies the same estimate, while the second integral can be simply estimated as 
\begin{align*}
\left| \int_{r_0-\delta}^{r_0+\delta} e^{it \phi^-_{\lambda} (r)  }\bar F_{\lambda}( |x|, r) \, dr\right| \lesssim  (\lambda |x|)^{-1/2} \delta  \lesssim  t^{-1} \lambda^{-1/2} .
\end{align*}
Therefore, combining the above computations with \eqref{I-decomp2d} we have
\begin{equation}\label{I-est4-2d}
\left|I^-_{\lambda, t}(x) \right| \lesssim    \lambda ^{3/2} t^{-1},
\end{equation}
concluding the stationary case.

In summary, from \eqref{I-est2-2d}, \eqref{I-est3-2d} and \eqref{I-est4-2d} we obtain
\begin{equation*}
\begin{split}
|I_{\lambda, t}(x)| \le \sum_{\pm}|I^\pm_{\lambda, t}(x)| \lesssim  \lambda ^{3/2} t^{-1}
\end{split}
\end{equation*}
which is the desired estimate \eqref{Iest-1} with $d=2$.


\section{ Function spaces, Linear and bilinear estimates}
\label{linear_bilinear_estimates_section}
\setcounter{equation}{0}
\subsection{ Function spaces}
The mixed space-time Lebesgue space $L_t^q L_x^r$ on $\R^{d+1}$ is defined with the norm
$$
\|u\|_{L_t^q L_x^r}= \| \|u(t, \cdot)\|_{L_x^r}\|_{L_t^q }=\left(\int_{\R} \left(\int_{\R^d} |u(t,x)|^r \, dx\right)^{\frac qr} \, dt\right)^\frac 1q
$$
for $1\le q, r <\infty$ 
with an obvious modification when $q=\infty$ or $r=\infty$. 
We write $  L_T^q L_x^r$ when the time variable is restricted to the interval $[0, T]$.

Define the Bourgain space $X^{s,b}_\pm$ on $\R^{d+1}$ by the norm
\begin{align*}
\| u \|_{X^{s,b}_{\pm}} &= \left\| \angles{ \xi }^{s} \angles{ \tau \pm m_d(\xi)}^b \widetilde{u} (\xi, \tau) \right\|_{L^2_{\tau, \xi}},
\end{align*}
where $\widetilde u$ denotes the space-time Fourier transform given by
$$
\widetilde u(\tau, \xi)=\int_{\R^{d+1}} e^{ -i(t\tau+ x\cdot\xi)} u(t,x) \ dt dx.
$$
The restriction to the time slab $(0, T)\times \mathbb R^d$
of the Bourgain space, denoted by $X^{s, b}_\pm(T)$,
is a Banach space when equipped with the norm
$$
\| u \|_{X^{s, b}_\pm(T)}  = \inf \left\{ \| v \|_{X^{s, b}_\pm}: \ v = u \text{ on }  (0, T) \times \mathbb{R}^d \right\}.
$$

\subsection{ Linear estimates}
Let us recall some of the properties of these
spaces. We have
\begin{equation}\label{XH-C}
\sup_{0\le t\le T}\norm{u(t)}_{H^s}\le C \norm{u}_{X_\pm^{s,
b}(T)} \ \ \text{for} \ b>1/2.
\end{equation}
 For $-1/2<b'\le b<1/2$ and $0<T<1$ we have
 \begin{align}
 \label{DeltaFactor}
\norm{u}_{X_\pm^{s, b'}(T)} &\le C
T^{b-b'}\norm{u}_{X_\pm^{s, b}(T)},
\end{align}
where $C$ is independent on $T$.
The
proof for \eqref{XH-C} and \eqref{DeltaFactor} can
for instance be found in \cite{t06}. We recall also that for $2 \le q \le \infty$ and $b>1/2$,
\begin{equation}\label{timeSob}
  \norm{u}_{L^q_tL_x^2} \le C \norm{u}_{X^{0,b}_\pm},
\end{equation}
as can be seen by writing the left hand side as $\norm{e^{\pm i t m_d(D)} u}_{L^q_TL_x^2}$, applying Plancherel in $x$, then using Minkowski's integral inequality to switch the order of the norms to $L^2_\xi L^q_t$, and finally applying Sobolev embedding in $t$.

It is well known (see, e.g., \cite{dfs2007}) that
for any $s\in \R$ and $b>1/2$ one has
\begin{align}
\label{linXb}
\| S_{m_d}(\pm t) f \|_{X_\pm^{ s, b}(T)} &\leq C \| f \|_{H^{s}},
\\
\label{inhXb}
\left\| \int_0^t S_{m_d}(\pm( t - t')) F(t')\ dt' \right\|_{X_\pm^{s, b}(T)} &\leq C \| F \|_{X_\pm ^{ s, b - 1}(T)},
\end{align}
where the constant $C > 0$ depends only on $b$.

We need the following 
Bernstein inequality which is valid for $1 \le p \le  r \le \infty $ (see for instance \cite[Appendix A]{t06}):
\begin{equation}
\label{berstineq}
\norm{P_\lambda f}_{L^r(\R^d)} \le C \lambda^{\frac dp-\frac dr} \norm{P_\lambda f}_{L^p(\R^d)} 
\end{equation}
Another useful tool is the Hardy-Littlewood-Sobolev inequality (see \cite[Appendix A]{t06})
which asserts that\begin{equation}
\label{HLSineq}
\norm{|\cdot |^{-\alpha}\ast f}_{L^a(\R)} \le C \norm{ f}_{L^b(\R)} 
\end{equation}
whenever $1 < b < a < \infty$ and $0 < \alpha < 1$ obey the scaling condition
$$
\frac1b=\frac1a +1-\alpha.
$$

\begin{lemma}[Localized Strichartz estimates]\label{lm-LocStr}
Let $\lambda >1$ and $d \in \{1, 2\}$. Assume that $2 < q < \infty$ and $2 \le r \le \infty$ satisfy
\[
  \frac{2}{q} = \frac{d}{2} \left(1-\frac{2}{r}\right).
\]
Then we have the estimate 
\begin{equation}
\label{Str}
\norm{S_{m_d}(\pm t) f_\lambda }_{ L^q_{t} L^r_{x} (\R^{d+1}) }
\lesssim \lambda^{(3d/8)(1-2/r)} 
\norm{ f_\lambda }_{ L^2_{ x}(\R^d )}.
\end{equation}
Moreover, if $b>1/2$, we have
\begin{equation}
\label{Str1d-Transfer}
\norm{ u_\lambda }_{L^q_{t} L^r_{x}  (\R^{d+1}) } \lesssim 
\lambda^{(3d/8)(1-2/r)}
\norm{u_\lambda}_{  X^{0, b}_\pm }.
\end{equation}
\end{lemma}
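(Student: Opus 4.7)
The proof proposal is the standard $TT^*$ argument combined with Hardy--Littlewood--Sobolev, followed by the usual transfer principle to the Bourgain space.

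First, I would set up the $TT^*$ machinery for the estimate \eqref{Str}. Define the linear operator $T f := S_{m_d}(\pm t) f_\lambda$. By a standard duality argument, proving $\|T f\|_{L^q_t L^r_x} \lesssim M \|f\|_{L^2_x}$ is equivalent to showing
\[
   \left\| \int_{\R} S_{m_d}(\pm(t-s)) F_\lambda(s, \cdot)\, ds \right\|_{L^q_t L^r_x} \lesssim M^2 \|F\|_{L^{q'}_t L^{r'}_x}.
\]
For the inner integrand, I would apply Corollary \ref{dispcor} with the exponent $r$, obtaining pointwise in $t$ the bound
\[
   \left\| S_{m_d}(\pm(t-s)) F_\lambda(s,\cdot) \right\|_{L^r_x}
   \lesssim \lambda^{(3d/4)(1-2/r)} |t-s|^{-(d/2)(1-2/r)} \|F(s,\cdot)\|_{L^{r'}_x}.
\]

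Next, I would apply the Hardy--Littlewood--Sobolev inequality \eqref{HLSineq} in the time variable with exponent $\alpha = (d/2)(1-2/r)$. Under the scaling relation $\frac{2}{q} = \frac{d}{2}(1-\frac{2}{r})$ one verifies $\alpha = 2/q$, which is precisely the scaling condition $\frac{1}{q'} = \frac{1}{q} + 1 - \alpha$ required for HLS mapping $L^{q'}_t \to L^q_t$. (Since $q > 2$, we have $q' < q$ and $0 < \alpha < 1$, so HLS indeed applies.) This yields
\[
   \|T T^* F\|_{L^q_t L^r_x} \lesssim \lambda^{(3d/4)(1-2/r)} \|F\|_{L^{q'}_t L^{r'}_x},
\]
and hence $M = \lambda^{(3d/8)(1-2/r)}$ as claimed in \eqref{Str}.

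For \eqref{Str1d-Transfer} I would use the standard transfer of linear estimates to Bourgain spaces. Write, via Fourier inversion in time and the substitution $\tau \mapsto \sigma \mp m_d(\xi)$,
\[
   u_\lambda(t,x) = \frac{1}{2\pi}\int_{\R} e^{it\sigma} \bigl[S_{m_d}(\pm t) g_\sigma \bigr](x)\, d\sigma,
   \qquad \widehat{g_\sigma}(\xi) := \widetilde{u_\lambda}(\sigma \mp m_d(\xi), \xi).
\]
By Minkowski's inequality followed by \eqref{Str} for each $g_\sigma$,
\[
   \|u_\lambda\|_{L^q_t L^r_x} \lesssim \lambda^{(3d/8)(1-2/r)} \int_{\R} \|g_\sigma\|_{L^2_x}\, d\sigma.
\]
Then Cauchy--Schwarz in $\sigma$ with the weight $\angles{\sigma}^{b}$ absorbs $\|g_\sigma\|_{L^2_x}$ into $\|u_\lambda\|_{X^{0,b}_\pm}$ while leaving $\int \angles{\sigma}^{-2b}\, d\sigma < \infty$ since $b > 1/2$, giving \eqref{Str1d-Transfer}.

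The one step that requires a touch of care is checking that the scaling hypothesis $\frac{2}{q} = \frac{d}{2}(1 - \frac{2}{r})$ exactly matches the HLS admissibility $\alpha = 2/q$ with $0 < \alpha < 1$; this is where the restriction $q > 2$ (and $r \leq \infty$, $r \geq 2$) enters. Everything else is routine: dispersive $+$ unitary $\Rightarrow$ interpolation $\Rightarrow$ $TT^*$ with HLS $\Rightarrow$ transfer.
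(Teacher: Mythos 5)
Your proposal is correct and is essentially the paper's own argument: a $TT^*$ reduction, the frequency-localized dispersive bound of Corollary \ref{dispcor}, and Hardy--Littlewood--Sobolev in the time variable with $\alpha=(d/2)(1-2/r)=2/q$, followed by the standard Fourier-inversion/Cauchy--Schwarz transfer to $X^{0,b}_\pm$ (which the paper simply cites from Tao's book). The only cosmetic difference is that the paper writes the $TT^*$ operator as space-time convolution with the explicit kernel $K_\lambda$ built from $\beta_\lambda^2$, whereas you apply the semigroup form directly; both are equivalent.
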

\begin{proof}
By the standard $TT^*$-argument, \eqref{Str} is equivalent to the estimate 
\begin{equation}
\label{TTstar}
\norm{  K_{\lambda} \star F }_{L^{q}_{t} L^r_{ x} (\R^{d+1}) } \lesssim 
   \lambda^{(3d/4)(1-2/r)} 
\norm{ F  }_{ L^{q'}_{ t} L_x^{r'}(\R^{d+1} )},
\end{equation}
where $1/q+1/q'=1$ and $1/r+1/r'=1$, and where
\begin{equation}
\label{Km}
  K_{\lambda}(x,t)=\int_{\R^d} e^{i x\cdot\xi \pm itm_d (\xi)} \tilde \beta_{\lambda}(\xi)\, d\xi
\end{equation}
with $\tilde \beta_{\lambda}= \beta^2_{\lambda}$. Here $\star$ denotes the space-time convolution.   
Then by Corollary \ref{dispcor} (with $\beta$ replaced by $\beta^2$, which does not affect the validity of the corollary) we have the estimate
\[
\| K_{\lambda}(\cdot,t) * f  \|_{L^r_x(\R^d)} \lesssim  \left( \lambda^{3d/4} |t|^{-d/2} \right)^{1-2/r}  \|f\|_{L_x^{r'}(\R^d)}.
\]
Combining this with the Hardy-Littlewood-Sobolev inequality in the $t$
variable, with $(a, b)=(q , q')$ and $\alpha= (d/2)(1-2/r)$,
we estimate
\begin{align*}
  \norm{K_{\lambda}\star F }_{ L^q_t L^r_x  }
  &=
  \norm{\int_{\R} \int_{\R^d} K_{\lambda}(x-y,t-s) F(y,s) \, dy \, ds }_{ L^q_t L^r_x  }
  \\
  &\le \norm{   \int_{\R} \norm{ \int_{\R^d} K_{\lambda}(x-y,t-s)
   F(y,s) \, dy}_{L_x^r}  \, ds}_{L^q_t}
  \\
 &\lesssim
 \norm{ \int_{\R}  \left( \lambda^{3d/4} |t-s|^{-d/2} \right)^{1-2/r}
 \norm{ F(\cdot,s) }_{L_x^{r'} }  \, ds }_{L_t^q}
   \\
 &\lesssim \lambda^{(3d/4)(1-2/r)}  \norm{  
  \norm{ F }_{L_x^{r'} }  }_{L^{q'}_{ t} },
\end{align*}
proving \eqref{TTstar} and hence \eqref{Str}. By a standard argument, the latter implies \eqref{Str1d-Transfer} (see, for example, the proof of Lemma 2.9 in \cite{t06}).
\end{proof}

\subsection{Bilinear estimates}

Set $K=K(D) = K_d(D)$ for $d=1,2$, and note that the Fourier symbol equals in both dimensions
\[
  K(\xi) = \sqrt{\frac{\tanh\abs{\xi}}{\abs{\xi}}} \sim \angles{\xi}^{-1/2},
\]
hence
\begin{equation}\label{L2est}
 \|K f_\lambda\|_{L^2_x}   \lesssim \lambda^{-\frac12} \| f_\lambda\|_{L^2_x},
 \quad 
 \||D|K^2 f_\lambda\|_{L^2_x} \lesssim \| f_\lambda\|_{L^2_x} ,
 \quad
 \||D|K f_\lambda\|_{L^2_x} \lesssim \lambda^\frac12 \| f_\lambda\|_{L^2_x} .
\end{equation}

We first note the following consequence of the localized Strichartz estimates in Lemma \ref{lm-LocStr}.

\begin{lemma}
\label{L2lemma}
Let $b > 1/2$ and dyadic $\lambda_1, \lambda_2 \geq 1$.
For $d=1, 2$ and $1 < p \leqslant 2 < r \leqslant \infty$
satisfying
\[
	\frac 12 + \frac 1r = \frac 1p
\]
we have the estimate
\[
\norm{u_{\lambda_1}  v_{\lambda_2}}_{L_t^2L^p_x}  
\lesssim \min(\lambda_1,\lambda_2)^{(3d/8) (1 - 2/r)} 
\norm{u_{\lambda_1}}_{ X_\pm^{0, b} } 
\norm{v_{\lambda_2} }_{X_\pm^{0, b}}
\]
provided $p < 2$ in the case $d = 2$.

For $d=2$ and $2 < r < \infty$, we have for all $T > 0$,
\[
\norm{u_{\lambda_1}  v_{\lambda_2}}_{L_T^2 L^2_x}  
\lesssim  T^{1/r} \min(\lambda_1,\lambda_2)^{3/4+1/(2r)}
\norm{u_{\lambda_1}}_{ X_\pm^{0, b} } 
\norm{v_{\lambda_2} }_{X_\pm^{0, b}}.
\]
In both estimates,
the signs in the $X_\pm$ norms can be chosen
independently on each other.
\end{lemma}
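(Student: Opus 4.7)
The plan is to reduce each bilinear estimate to the localized Strichartz bound in Lemma \ref{lm-LocStr} via two applications of H\"older's inequality and the transfer principle \eqref{Str1d-Transfer}. By the symmetry of the claim in $\lambda_1$ and $\lambda_2$, I may assume without loss of generality that $\lambda_1\le\lambda_2$, so $\min(\lambda_1,\lambda_2)=\lambda_1$; the Strichartz gain will be extracted from the lower-frequency factor, while the higher-frequency factor is kept in $L^2_x$, where it carries no derivative loss.

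For the first estimate, I would first apply H\"older in $x$ with exponents $(r,2)$, legitimate since $1/p=1/r+1/2$, to get $\norm{u_{\lambda_1} v_{\lambda_2}}_{L^p_x}\le\norm{u_{\lambda_1}}_{L^r_x}\norm{v_{\lambda_2}}_{L^2_x}$, and then H\"older in $t$ with the split $1/2=1/q+1/a$, choosing $(q,r)$ to be the admissible Strichartz pair $2/q=(d/2)(1-2/r)$ from Lemma \ref{lm-LocStr}. The hypotheses $r>2$ in all cases, together with $r<\infty$ when $d=2$, translate exactly into $2<q<\infty$, so Lemma \ref{lm-LocStr} is available; a direct computation gives $1/a=1/2-(d/4)(1-2/r)\in(0,1/2]$, and in particular $a\ge 2$. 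Then \eqref{Str1d-Transfer} applied to $u_{\lambda_1}$ produces the factor $\lambda_1^{(3d/8)(1-2/r)}$, while \eqref{timeSob} dominates $\norm{v_{\lambda_2}}_{L^a_t L^2_x}$ by $\norm{v_{\lambda_2}}_{X^{0,b}_\pm}$. Multiplying the two bounds yields the first claim.

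For the second estimate, which addresses the endpoint $p=2$ excluded above in $d=2$, the loss of the Strichartz endpoint $r=\infty$ is recouped by a Bernstein inequality on the lower-frequency factor together with a $T^{1/r}$ factor in time. I would write
\[
\norm{u_{\lambda_1} v_{\lambda_2}}_{L^2_x}
\le \norm{u_{\lambda_1}}_{L^\infty_x}\norm{v_{\lambda_2}}_{L^2_x}
\lesssim \lambda_1^{d/r}\norm{u_{\lambda_1}}_{L^r_x}\norm{v_{\lambda_2}}_{L^2_x}
\]
using Bernstein \eqref{berstineq}, then take the $L^2_T$-norm and split in time via $1/2=1/q+1/r$ with $q=2r/(r-2)$, the Strichartz pair of $r$ in dimension $d=2$. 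Applying \eqref{Str1d-Transfer} to $u_{\lambda_1}$, while bounding $\norm{v_{\lambda_2}}_{L^r_T L^2_x}\le T^{1/r}\norm{v_{\lambda_2}}_{L^\infty_t L^2_x}\lesssim T^{1/r}\norm{v_{\lambda_2}}_{X^{0,b}_\pm}$ via \eqref{XH-C}, the total exponent on $\lambda_1$ becomes $\frac{d}{r}+\frac{3d}{8}(1-\frac{2}{r})$, which specialises to $\frac{3}{4}+\frac{1}{2r}$ when $d=2$, as claimed.

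The independence of signs asserted at the end is free, since Lemma \ref{lm-LocStr} and \eqref{timeSob} hold for either sign in the semigroup and the above argument treats $u_{\lambda_1}$ and $v_{\lambda_2}$ through separate Bourgain norms. There is no substantive obstacle here beyond bookkeeping: the point to watch is that the chosen pair $(q,a)$ lies simultaneously in the admissibility range of Lemma \ref{lm-LocStr} and of \eqref{timeSob}, and that the case $p=2$ in $d=2$ is precisely the one that forces the alternative Bernstein-based route used for the second estimate.
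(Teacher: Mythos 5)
Your argument is essentially the paper's own proof: assume $\lambda_1\le\lambda_2$ by symmetry, split by H\"older in $x$ and then in $t$, use the localized Strichartz/transfer bound \eqref{Str1d-Transfer} on the low-frequency factor and \eqref{timeSob} (or \eqref{XH-C}) on the high-frequency factor, and for the endpoint $p=2$, $d=2$ insert a Bernstein inequality and harvest a factor $T^{1/r}$ by H\"older in time; the exponent bookkeeping $2/r+\tfrac34(1-2/r)=\tfrac34+\tfrac1{2r}$ is correct, and whether the $T^{1/r}$ is taken from $u_{\lambda_1}$ (as in the paper) or from $\lVert v_{\lambda_2}\rVert_{L^r_T L^2_x}\le T^{1/r}\lVert v_{\lambda_2}\rVert_{L^\infty_T L^2_x}$ (as you do) is immaterial.

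The one point you skip is the case $\min(\lambda_1,\lambda_2)=\lambda_1=1$, which the lemma's hypothesis "dyadic $\lambda_1,\lambda_2\ge 1$" allows. Lemma \ref{lm-LocStr} and the dispersive estimate behind it are stated only for $\lambda>1$ (the projection $P_1$ is a low-frequency ball, where $m''(r)\sim r\angles{r}^{-5/2}$ degenerates as $r\to 0$, so the stationary-phase decay is not available), hence your invocation of \eqref{Str1d-Transfer} at $\lambda_1=1$ is not licensed as written. The fix is the one the paper gives: for $\lambda_1=1$ replace the Strichartz step by the Bernstein inequality \eqref{berstineq}, $\lVert u_{1}\rVert_{L^r_x}\lesssim\lVert u_{1}\rVert_{L^2_x}$, followed by \eqref{timeSob} (with, in the second estimate, the same H\"older-in-time step to produce $T^{1/r}$); since the claimed gain $\min(\lambda_1,\lambda_2)^{(3d/8)(1-2/r)}$ equals $1$ in this case, nothing is lost. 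With that one-line addition your proof is complete and coincides with the paper's.
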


\begin{proof}
By symmetry we may assume 
$1\le \lambda_1 \le \lambda_2$.
Consider first the case $d=1$. By H\"{o}lder's inequality
and \eqref{timeSob},
\[
  \norm{u_{\lambda_1}v_{\lambda_2}}_{L_t^2L^p_x} \le
  \norm{u_{\lambda_1}}_{ L_t^q L_x^r  } 
  \norm{ v_{\lambda_2} }_{ L_t^{q_1} L_x^2   }
  \lesssim
  \norm{u_{\lambda_1}}_{ L_t^q L_x^r  } 
  \norm{ v_{\lambda_2} }_{X_\pm^{0, b}},
\]
where $q$ is taken as in Lemma \ref{lm-LocStr}
and
$1/q + 1/q_1 = 1/2$.
So it only remains to check that
\[
	\norm{u_{\lambda_1}}_{ L_t^q L_x^r  }
	\lesssim
	\lambda_1^{3/8 (1 - 2/r)} \norm{u_{\lambda_1}}_{ X_\pm^{0, b} },
\]
but this holds by Lemma \ref{lm-LocStr} if $\lambda_1 > 1$,
while if $\lambda_1=1$ we can use the Bernstein inequality 
\eqref{berstineq} followed by \eqref{timeSob} to obtain
\[
	\norm{u_{\lambda_1}}_{ L_t^q L_x^r  }
	\lesssim \norm{u_{\lambda_1}}_{ L_t^q L_x^2  }
	\lesssim
	\norm{u_{\lambda_1}}_{  X_\pm^{0, b} } .
\]
Similarly, one obtains the first estimate for $p < d = 2$.

Now consider the case $p = d = 2$.
We apply H\"{o}lder's inequality and \eqref{timeSob} to write
\[
  \norm{u_{\lambda_1}v_{\lambda_2}}_{L_T^2L^2_x} \le
  \norm{u_{\lambda_1}}_{ L_T^2 L_x^\infty  } 
  \norm{ v_{\lambda_2} }_{ L_T^\infty L_x^2  }
  \lesssim 
  \norm{u_{\lambda_1}}_{ L_T^2 L_x^\infty  } 
  \norm{ v_{\lambda_2} }_{X_\pm^{0, b}}.
\]
To estimate $\norm{u_{\lambda_1}}_{ L_T^2 L_x^\infty  }$ we want to use Lemma \ref{lm-LocStr}, so we let $2 < r < \infty$ and define $q$ by $2/q=1-2/r$. Thus $1/2=1/q+1/r$, so  applying H\"older in $t$,  the Bernstein inequality in $x$, and finally Lemma \ref{lm-LocStr}, we get
\[
  \norm{u_{\lambda_1}}_{ L_T^2 L_x^\infty  }
  \le
  T^{1/r} \lambda_1^{2/r} \norm{u_{\lambda_1}}_{ L_T^q L_x^r  }
  \lesssim
  T^{1/r} \lambda_1^{2/r}  \lambda_1^{(3/4)(1-2/r)} \norm{u_{\lambda_1}}_{ X_\pm^{0, b} },
\]
proving the claimed estimate in the case $\lambda_1 > 1$. If $\lambda_1 = 1$, we can apply the Bernstein inequality and \eqref{timeSob}, instead of Lemma \ref{lm-LocStr}, and again we get the desired estimate.
\end{proof}

We now present the key bilinear space-time estimates needed for the proof of local well-posedness. 

\begin{lemma}
\label{lm-bilest}
Let $1/2<b<1$ and $0 < T < 1$. Assume that $s_d>-1/10 $  if $d=1$ and $s_d>1/4$ if $d=2$.
Then we have the estimates
\begin{align}
\label{biest10}
\norm{  |D| K^2 \left( u\cdot  Kv \right) }_{X_\pm^{s_d, b-1}(T)}  
\lesssim   T^{1-b}
\norm{u}_{ X_\pm^{s_d, b} } 
\norm{v}_{X_\pm^{s_d, b}} ,
\\
\label{biest21}
\norm{  |D|K \left(  Ku \cdot Kv \right) }_{X_\pm^{s_d, b-1}(T)}  
\lesssim  T^{1-b}
\norm{u}_{ X_\pm^{s_d, b} } 
\norm{v }_{X_\pm^{s_d, b}} ,
\end{align}
where the signs in all the $X_\pm$ norms can be chosen independently on each other.
\end{lemma}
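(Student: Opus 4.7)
My plan has three steps: reduce to an $L^2_{t,x}$ bilinear estimate, Littlewood--Paley decompose, and sum the dyadic contributions carefully. First, since $1/2<b<1$, applying \eqref{DeltaFactor} with $b'=b-1$ (and with $0$ playing the role of $b$ in that formula) gives
\[
  \norm{F}_{X^{s_d, b-1}_\pm (T)} \lesssim T^{1-b}\norm{F}_{X^{s_d,0}_\pm(T)} \sim T^{1-b}\norm{\angles{D}^{s_d} F}_{L^2([0,T]\times \R^d)}.
\]
This accounts for the entire $T^{1-b}$ factor in \eqref{biest10}--\eqref{biest21}, and reduces the problem to proving
\[
  \norm{\angles{D}^{s_d} B_i(u,v)}_{L^2_{t,x}} \lesssim \norm{u}_{X_\pm^{s_d,b}}\norm{v}_{X_\pm^{s_d,b}},\qquad i=1,2,
\]
where $B_1(u,v)=|D|K^2(u\cdot Kv)$ and $B_2(u,v)=|D|K(Ku\cdot Kv)$.

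Second, I would perform a Littlewood--Paley decomposition $u=\sum_{\lambda_1}u_{\lambda_1}$, $v=\sum_{\lambda_2}v_{\lambda_2}$, and write $\norm{\angles{D}^{s_d}B_i(u,v)}_{L^2_{t,x}}^2 \sim \sum_{\lambda_0}\norm{P_{\lambda_0}\angles{D}^{s_d}B_i(u,v)}_{L^2_{t,x}}^2$. Fourier support forces $\lambda_0\lesssim\max(\lambda_1,\lambda_2)$, giving the usual three regimes: high-high ($\lambda_1\sim\lambda_2$), high-low ($\lambda_2\ll\lambda_1\sim\lambda_0$ or symmetric), and low-low ($\lambda_1,\lambda_2\lesssim 1$, which is trivial by Bernstein \eqref{berstineq} combined with \eqref{timeSob}). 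Using \eqref{L2est}, the multiplier $\angles{D}^{s_d}|D|K^2$ contributes $\lesssim\angles{\lambda_0}^{s_d}$ on the output shell, while $\angles{D}^{s_d}|D|K$ contributes $\lesssim\angles{\lambda_0}^{s_d+1/2}$; each inner $K$ on a piece at frequency $\lambda_i$ produces $\lambda_i^{-1/2}$. For the bilinear product I would invoke Lemma \ref{L2lemma}: in 1d the endpoint $p=2$, $r=\infty$ is admissible and gives
\[
\norm{u_{\lambda_1}v_{\lambda_2}}_{L^2_{t,x}}\lesssim \min(\lambda_1,\lambda_2)^{3/8}\norm{u_{\lambda_1}}_{X_\pm^{0,b}}\norm{v_{\lambda_2}}_{X_\pm^{0,b}},
\]
whereas in 2d this endpoint is forbidden and I would use the second inequality of Lemma \ref{L2lemma} with some finite $r>2$, producing $T^{1/r}\min(\lambda_1,\lambda_2)^{3/4+1/(2r)}$ (the extra factor $T^{1/r}$ is harmlessly absorbed into the existing $T^{1-b}$). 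Converting $\norm{u_{\lambda_1}}_{X_\pm^{0,b}}\sim \lambda_1^{-s_d}\norm{P_{\lambda_1}u}_{X_\pm^{s_d,b}}$ and similarly for $v$, every dyadic contribution reduces to a numerical prefactor in the dyadic scales times $\norm{P_{\lambda_1}u}_{X_\pm^{s_d,b}}\norm{P_{\lambda_2}v}_{X_\pm^{s_d,b}}$, which can then be summed over $\lambda_0,\lambda_1,\lambda_2$ by Cauchy--Schwarz and the $\ell^2_N$ square-summability of $(\norm{P_N u}_{X_\pm^{s_d,b}})_N$.

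The hard part will be tracking the exponents precisely enough to see that the dyadic sums converge exactly at the stated thresholds. The binding case is \eqref{biest21} (the form $B_2$) in the high-high regime: the extra derivative $\lambda_0^{1/2}$ produced by the symbol of $|D|K$ must be absorbed by the two inner smoothing factors $\lambda_1^{-1/2}\lambda_2^{-1/2}$ together with the Strichartz gain $\min(\lambda_1,\lambda_2)^{3/8}$ in 1d (resp.\ $\min(\lambda_1,\lambda_2)^{3/4+1/(2r)}$ in 2d), and balancing these powers through Cauchy--Schwarz over $N=\max(\lambda_1,\lambda_2)$ and over the output scale should give precisely $s_d>-1/10$ (1d) and $s_d>1/4$ (2d). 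Once \eqref{biest21} is established, the estimate \eqref{biest10} for $B_1$ follows by the same scheme with strictly more room to spare, since the outer symbol of $\angles{D}^{s_d}|D|K^2$ is bounded by $\angles{\lambda_0}^{s_d}\tanh\lambda_0$ and costs no positive power of $\lambda_0$ in the large-frequency regime.
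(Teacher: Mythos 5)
Your overall scheme --- extracting the factor $T^{1-b}$ via \eqref{DeltaFactor}, reducing to an $L^2_T H^{s_d}_x$ bound for the bilinear expressions, decomposing dyadically and invoking the localized bilinear estimates of Lemma \ref{L2lemma}, then summing over the three frequency regimes --- is exactly the paper's strategy. The genuine gap is in the one-dimensional case, where you fix the endpoint $p=2$, $r=\infty$ of Lemma \ref{L2lemma}. With that choice the dyadic sums do not close down to $s_1>-1/10$. Concretely, in the high-high interaction with low output frequency ($\lambda\lesssim\lambda_1\sim\lambda_2$) the dyadic weight for \eqref{biest10} becomes $\lambda^{s_1}\min(\lambda_1,\lambda_2)^{3/8}\,\lambda_1^{-s_1}\lambda_2^{-1/2-s_1}\sim\lambda^{s_1}\lambda_2^{-1/8-2s_1}$, and since the dyadic pieces of $u$ and $v$ are only square-summable, the sum over $\lambda_1\sim\lambda_2\gtrsim\lambda$ requires the exponent of the high frequency to be nonpositive, i.e. $s_1\ge -1/16$. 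Thus your version of the argument proves the lemma only for $s_1>-1/16$ and misses precisely the range $(-1/10,-1/16]$ that the statement (and Theorem \ref{lwpthm}) targets; the ``balancing'' you defer to the exponent bookkeeping is simply not available once $p=2$, $r=\infty$ is frozen.

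The missing idea is to apply Lemma \ref{L2lemma} off the endpoint, with $1<p<2$ and $1/r=1/p-1/2$, in exactly this regime: measuring the product in $L^2_tL^p_x$ costs a Bernstein factor $\lambda^{d/p-d/2}=\lambda^{1/r}$ on the \emph{small output} frequency only, while the loss on the high frequency improves from $\min(\lambda_1,\lambda_2)^{3/8}$ to $\min(\lambda_1,\lambda_2)^{(3/8)(1-2/r)}$. The two resulting constraints, $s_1>(2/r-1)/8$ and $s_1>-(1+6/r)/16$, are balanced at $r=10$ (i.e. $p=5/3$), which is the paper's choice and yields exactly the threshold $s_1>-1/10$; the endpoint $p=2$ can then be kept in all remaining regimes, where the conditions are weaker ($s_1>-1/8$ up to $\varepsilon$). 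Your treatment of $d=2$ via the second estimate of Lemma \ref{L2lemma}, absorbing $T^{1/r}\le 1$, is consistent with the paper (there the binding regime is the high-low one, giving $s_2>1/4$). One smaller inaccuracy: in the high-high regime \eqref{biest21} is no worse than \eqref{biest10}, because the extra symbol factor is $(\lambda/\lambda_1)^{1/2}\lesssim 1$ there; the extra factor genuinely matters only when $\lambda_1\ll\lambda_2\sim\lambda$, and is offset as in the paper's treatment of the modified sum $I_2'(d)$, so identifying the high-high case of \eqref{biest21} as ``the'' binding case is not quite right, though it does not by itself break your argument.
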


\begin{proof}[Proof of \eqref{biest10}]
In view of \eqref{DeltaFactor} the estimate \eqref{biest10} reduces to proving
\[
\norm{    |D| K^2 \left( u\cdot  Kv \right) }_{L_T^2 H_x^{s_d}}  
\lesssim
\norm{u}_{ X_\pm^{s_d, b} } 
\norm{v}_{X_\pm^{s_d, b}},
\]
which by duality can be reduced to
\begin{equation}\label{duality-biest11}
\left| \int_0^T \int_{\R^d}   |D| K^2\angles{D}^{s_d}\left( \angles{D}^{-s_d} u\cdot  \angles{D}^{-s_d}Kv \right) w \ dx dt \right| \lesssim
\norm{u}_{  X^{0, b}_\pm} \norm{v}_{  X^{0, b}_\pm } \norm{ w }_{L^2_{t,x}}.
\end{equation}
Decomposing $u= \sum_{\lambda_1 \ge 1} u_{\lambda_1}$ and $v= \sum_{\lambda_2 \ge 1} v_{\lambda_2}$ 
we have
\begin{equation}\label{duality-biestdecomp}
\text{LHS \eqref{duality-biest11} } \lesssim \sum_{\lambda, \lambda_1, \lambda_2 \ge 1} \left| \int_0^T \int_{\R^d}  |D| K^2\angles{D}^{s_d} P_\lambda \left( \angles{D}^{-{s_d}} u_{\lambda_1}\cdot  \angles{D}^{-{s_d}}  K v_{\lambda_2} \right) w_\lambda \ dx dt \right|.
\end{equation}
Setting
$$
a_{\lambda_1}:= \norm{u_{\lambda_1}}_{  X^{0, b}_\pm }, \quad b_{\lambda_2}:= \norm{v_{\lambda_2}}_{  X^{0, b}_\pm }, \quad c_{\lambda}:= \norm{w_{\lambda}}_{L^2_{t,x}   }
$$
we have
$$
\norm{u}_{  X^{0, b}_\pm }\sim \|(a_{\lambda_1})\|_{l^2_{\lambda_1}}, \quad  
\norm{v}_{  X^{0, b}_\pm }\sim \|(b_{\lambda_2})\|_{l^2_{\lambda_2}} , \quad   
\norm{w}_{  L^2_{t,x} }\sim \|(c_{\lambda})\|_{l^2_{\lambda}},
$$
hence the estimate \eqref{duality-biest11} reduces to proving 
\begin{equation}\label{duality-biest11-r}
\text{RHS \eqref{duality-biestdecomp}}\lesssim \|(a_{\lambda_1})\|_{l^2_{\lambda_1}} \|(b_{\lambda_2})\|_{l^2_{\lambda_2}} \|(c_{\lambda})\|_{l^2_{\lambda}}.
\end{equation}
To this end, we note that by Lemma \ref{L2lemma} we have, for $\varepsilon > 0$ arbitrarily small
and $1 < p \leqslant 2$ with $1/r = 1/p - 1/2$,
\begin{equation}\label{L2bilinear}
	\norm{
		P_\lambda
			\left( u_{\lambda_1}  v_{\lambda_2} \right)
	}_{L_T^2L^p_x}  
	\lesssim
	\min(\lambda_1,\lambda_2)^{3d/8 (1 - 2/r) +\varepsilon}
	\norm{u_{\lambda_1}}_{ X_\pm^{0, b} } 
  	\norm{v_{\lambda_2} }_{X_\pm^{0, b}}.
\end{equation}
We remark that in dimension $d=1$,
the lemma would actually allow us to take $\varepsilon = 0$,
but the proof below works for sufficiently small,
positive $\varepsilon > 0$ in both dimensions.
Note apart from Estimate \eqref{L2bilinear}
that all the non-trivial terms of RHS \eqref{duality-biestdecomp}
can be separated into the three groups
\(
	I_1, I_2, I_3
\)
with
\(
	\lambda \lesssim \lambda_1 \sim \lambda_2
	,
	\lambda_1 \ll \lambda_2 \sim \lambda
\)
and
\(
	\lambda_2 \ll \lambda_1 \sim  \lambda
	,
\)
respectively.
It is a straightforward consequence of taking $\lambda$-projection
of the product of $\lambda_1, \lambda_2$-projections.
For each of these groups we can make a particular choice
of $p$ applying Estimate \eqref{L2bilinear}.

Thus using Cauchy-Schwarz,
the Bernstein inequality \eqref{berstineq},
\eqref{L2est} and \eqref{L2bilinear} we obtain
\begin{equation}\label{maindecomp}
\begin{split}
\text{RHS \eqref{duality-biestdecomp}}
&\lesssim
 \sum_{\lambda, \lambda_1, \lambda_2 \ge 1}
\norm{  |D| K^2\angles{D}^{s_d} P_\lambda \left( \angles{D}^{-s_d} u_{\lambda_1}\cdot  \angles{D}^{-s_d} K v_{\lambda_2} \right) }_{L_{T,x}^2} \norm{w_{\lambda} }_{ L_{T,x}^2  }
\\
&\lesssim
\sum_{
	\substack{ \lambda, \lambda_1, \lambda_2 \ge 1
	\\
	P_{\lambda}(\ldots) \neq 0 }
}
\lambda^{s_d + d/p - d/2}
\min(\lambda_1,\lambda_2)^{3d/8 (1 - 2/r) +\varepsilon}
 \lambda_1^{-s_d} \lambda_2^{-1/2-s_d }  
a_{\lambda_1} b_{\lambda_2} c_{\lambda}
\\
&\lesssim  I_1(d) + I_2(d) + I_3(d),
\end{split}
\end{equation}
where
\begin{align*}
I_1(d)&= 
 \sum_{  \substack{ \lambda, \lambda_1, \lambda_2 \ge 1\\ \lambda\lesssim  \lambda_1\sim  \lambda_2 } }
\lambda^{s_d + d/p - d/2}
\lambda_2^{3d/8 (1 - 2/r) + \varepsilon -1/2-2s_d } 
a_{\lambda_1} b_{\lambda_2} c_{\lambda}, 
\\
\quad I_2(d)&= \sum_{  \substack{ \lambda, \lambda, \lambda_2 \ge 1\\  \lambda_1 \ll \lambda_2\sim  \lambda } }  \left(\frac{\lambda_1}{\lambda_2}\right)^{1/2}  \lambda_1^{3d/8+\varepsilon-1/2-s_d } 
a_{\lambda_1} b_{\lambda_2} c_{\lambda}, 
\\
\quad I_3(d)&=\sum_{  \substack{\lambda,  \lambda_1, \lambda \ge 1\\    \lambda_2 \ll \lambda_1\sim  \lambda} }  \lambda_2^{3d/8+\varepsilon-1/2-s_d } 
a_{\lambda_1} b_{\lambda_2} c_{\lambda}.
\end{align*}

We first estimate $I_1(1)$.
Notice
\(
	\lambda^{s_1 + 1/p - 1/2}
	\lambda_2^{3/8 (1 - 2/r) + \varepsilon - 1/2 - 2s_1 } 
	\lesssim
	\lambda^{1/p - 1 + 3/8 (1 - 2/r) + \varepsilon - s_1 }
\)
provided
\(
	3/8 (1 - 2/r) + \varepsilon - 1/2 - 2s_1 < 0
\)
or equivalently if
\(
	s_1 > - (1 + 6/r)/16 + \varepsilon / 2
	.
\)
Consequently, we can apply the Cauchy-Schwarz inequality first in $\lambda_1 \sim \lambda_2$ and then in $\lambda$ to estimate $I_1(1)$ as
\[
	I_1(1)
	\lesssim
	\left(
		\sum_{\lambda \ge 1}
		\lambda^{ (2/r - 1)/8 + \varepsilon - s_1 }
		c_{\lambda} 
	\right)
	\|(a_{\lambda_1})\|_{l^2_{\lambda_1}} \|(b_{\lambda_2})\|
	_{l^2_{\lambda_2}} \lesssim
	\|(a_{\lambda_1})\|_{l^2_{\lambda_1}} \|(b_{\lambda_2})\|
	_{l^2_{\lambda_2}} \|(c_{\lambda})\|_{l^2_{\lambda}}
\]
provided
\(
	s_1 > (2/r - 1)/8 + \varepsilon
\)
and
\(
	s_1 > - (1 + 6/r)/16 + \varepsilon / 2
	.
\)
The lowest possible bound $s_1 > -1/10$ is obtained by taking $r = 10$,
since $\varepsilon > 0$ is arbitrary small
(and can be taken actually zero according to Lemma \ref{L2lemma}).
This corresponds to $p = 5/3$.
To estimate the other sums it is enough to stick to $p = 2$
everywhere below.

Next we estimate $I_1(2)$ with $p = 2$.
Since $s_2>1/4$ by assumption, we have $\lambda^{s_2} \lambda_2^{\varepsilon+1/4-2s_2 } \lesssim ( \lambda/\lambda_2)^{s_2 }$.
Then we apply the Cauchy-Schwarz inequality first in $\lambda$ and then in $\lambda_1 \sim \lambda_2$ to obtain the desired estimate:
\[
I_1(2) \lesssim \sum_{  \lambda_1\sim  \lambda_2 }  \left(
\sum_{\lambda\lesssim \lambda_2}  ( \lambda/\lambda_2)^{s_2 } c_{\lambda} \right) 
a_{\lambda_1} b_{\lambda_2}
\lesssim
\|(a_{\lambda_1})\|_{l^2_{\lambda_1}} \|(b_{\lambda_2})\|_{l^2_{\lambda_2}} \|(c_{\lambda})\|_{l^2_{\lambda}}.
\]

Next we estimate $I_3(d)$. Since $s_d>3d/8-1/2$, we have $\varepsilon+3d/8-1/2-s_d < 0$, for $\varepsilon > 0$ small enough. Applying the Cauchy-Schwarz inequality first in $\lambda_1 \sim \lambda$ and then in $\lambda_2$, we get
\[
  I_3(d)
  \lesssim
  \left( \sum_{\lambda_2 \ge 1}  \lambda_2^{3d/8+\varepsilon-1/2-s_d } b_{\lambda_2} \right)
  \|(a_{\lambda_1})\|_{l^2_{\lambda_1}} \|(c_{\lambda})\|_{l^2_{\lambda}}
  \lesssim
  \|(a_{\lambda_1})\|_{l^2_{\lambda_1}} \|(b_{\lambda_2})\|_{l^2_{\lambda_2}} \|(c_{\lambda})\|_{l^2_{\lambda}}.
\]

Finally, we note that in $I_2(d)$, we can discard the small factor $(\lambda_1/\lambda_2)^{1/2}$ and reduce to the same estimate as for $I_3(d)$. This completes the proof of \eqref{biest10}.
\end{proof}

\begin{proof}[Proof of \eqref{biest21}]
We follow the same argument as in the proof of \eqref{biest10}. By duality and dyadic decomposition, \eqref{biest21} reduces to proving
\[
  S \lesssim \|(a_{\lambda_1})\|_{l^2_{\lambda_1}} \|(b_{\lambda_2})\|_{l^2_{\lambda_2}} \|(c_{\lambda})\|_{l^2_{\lambda}},
\]
where
\[
  S = \sum_{\lambda, \lambda_1, \lambda_2 \ge 1} \left|\int_0^T \int_{\R^d} |D| K\angles{D}^{s_d} P_\lambda \left(  \angles{D}^{-s_d} Ku_{\lambda_1}\cdot \angles{D}^{-s_d} K v_{\lambda_2} \right) w_\lambda \ dx dt \right| .
\]
By Cauchy-Schwarz, \eqref{L2est} and \eqref{L2bilinear} we obtain
\begin{align*}
  S
  &\lesssim
\sum_{
	\substack{ \lambda, \lambda_1, \lambda_2 \ge 1
	\\
	P_{\lambda}(\ldots) \neq 0 }
}
  \lambda^{\frac12+s_d}  \min(\lambda_1,\lambda_2)^{3d/8+\varepsilon}
  (\lambda_1\lambda_2)^{-1/2 -s_d}
  a_{\lambda_1} b_{\lambda_2} c_{\lambda}
  \\
  &= 
\sum_{
	\substack{ \lambda, \lambda_1, \lambda_2 \ge 1
	\\
	P_{\lambda}(\ldots) \neq 0 }
}
  \left( \frac{\lambda}{\lambda_1} \right)^{1/2}
  \lambda^{s_d}  \min(\lambda_1,\lambda_2)^{3d/8+\varepsilon} \lambda_1^{-s_d} \lambda_2^{-1/2-s_d}
a_{\lambda_1} b_{\lambda_2} c_{\lambda},
\end{align*}
and comparing with the corresponding sum \eqref{maindecomp} from the proof of \eqref{biest10}, we see that the only difference is that we now have an extra factor $(\lambda/\lambda_1)^{1/2}$.
This factor is bounded except for the case
$\lambda_1 \ll \lambda_2 \sim \lambda$,
so it is enough to consider $I_2(d)$ with this factor inserted:
\[
  I_2'(d) = \sum_{  \substack{ \lambda, \lambda, \lambda_2 \ge 1\\  \lambda_1 \ll \lambda_2\sim  \lambda } }  \left( \frac{\lambda}{\lambda_1} \right)^{1/2} \left(\frac{\lambda_1}{\lambda_2}\right)^{1/2}  \lambda_1^{3d/8+\varepsilon-1/2-s_d } 
a_{\lambda_1} b_{\lambda_2} c_{\lambda}
\lesssim
\sum_{  \substack{ \lambda, \lambda, \lambda_2 \ge 1\\  \lambda_1 \ll \lambda_2\sim  \lambda } }  	  \lambda_1^{3d/8+\varepsilon-1/2-s_d } 
a_{\lambda_1} b_{\lambda_2} c_{\lambda}.
\]
But the right hand side was already estimated in the proof of \eqref{biest10} (the estimate for $I_3(d)$). This completes the proof of \eqref{biest21}.
\end{proof}

\section{Proof of Theorem \ref{lwpthm}, Theorem \ref{lwpthm2d-1} and Theorem \ref{mainthm}}
\setcounter{equation}{0}

\subsection{Proof of Theorems \ref{lwpthm} and Theorem \ref{lwpthm2d-1} }
We solve the integral equations \eqref{inteqwt1} and \eqref{inteqwt2d-1} by contraction mapping techniques
as follows. Define the mapping
\[
  (u^+_d, u^-_d) \mapsto (\Phi_+(u^+_d, u^-_d),\Phi_-(u^+_d, u^-_d))
\]
by
\[
\Phi_\pm(u^+_d, u^-_d)(t) :=S_{m_d}(\pm t) f^\pm_d -i  \int_0^t S_{m_d}( \pm (t-s) ) B^\pm_d(u^+_d, u^-_d)(s) \, ds.
\]
Let 
\[
  R_d = \|f^+_d\|_{H^{s_d}} +\|f^-_d\|_{H^{s_d}}.
\]
We look for a solution in the set
\[
  \mathcal D(R_d) = \left\{  (u^+_d, u^-_d) \in X^{s_d,b}_+(T) \times X^{s_d,b}_-(T)
  \colon \| u^+_d\|_{ X^{s_d,b}_+(T)} + \| u^-_d\|_{ X^{s_d,b}_-(T)} \le 4 C R_d \right\}
\]
where $b \in (1/2, 1)$ and
$C$ is as in \eqref{linXb}, \eqref{inhXb}. Now for $(u^+_d, u^-_d) \in \mathcal D(R_d)$ we have by \eqref{linXb}, \eqref{inhXb} and Lemma \ref{lm-bilest},
$$
  \| \Phi_+(u^+_d, u^-_d)\|_{ X^{s_d,b}_+(T)} + \| \Phi_-(u^+_d, u^-_d)\|_{ X^{s_d,b}_-(T)} \le 2C  R_d + C'  T^{1-b} R_d^2 \le 4CR_d,
$$
where the last inequality certainly holds provided that
\[
  T =  \left(\frac{1}{16 CC' (1+R_d)} \right)^{\frac{1}{1-b}}.
\]
Moreover, for $(u^+_d, u^-_d)$ and $(v^+_d, v^-_d)$ in $\mathcal D(R_d)$ with the same data, one can show similarly the difference estimate
\begin{align*}
 &\sum_\pm \| \Phi_\pm(u^+_d, u^-_d)-\Phi_\pm(v^+_d, v^-_d)\|_{ X^{s_d,b}_\pm(T)}
 \\
 & \quad \le C' T^{1-b} \left( \sum_{\pm} \| u^\pm_d-v^\pm_d\|_{ X^{s_d,b}_\pm(T)} \right)
 \left( \sum_{\pm} \left( \| u^\pm_d\|_{ X^{s_d,b}_\pm(T)}+  \| v^\pm_d\|_{ X^{s_d,b}_\pm(T)} \right) \right)
 \\
 &\quad \le 8CC' R_d T^{1-b} \left( \sum_{\pm} \| u^\pm_d-v^\pm_d\|_{ X^{s_d,b}_\pm(T)} \right).
\end{align*}
With $T$ chosen as above, the constant $8CC' R_d T^{1-b}$ is strictly less than one, hence $(\Phi_+,\Phi_-)$ is a contraction on $\mathcal D(R_d)$ and therefore it has a unique fixed point $(u^+_d, u^-_d) \in \mathcal D(R_d)$ solving the integral equation on $\R^d\times [0, T]$. Uniqueness in the whole space $X^{s_d,b}_+(T) \times X^{s_d,b}_-(T)$ and continuous dependence on the initial data can be shown in a similar way, by the difference estimates. This concludes the proof of Theorems \ref{lwpthm} and \ref{lwpthm2d-1}.

Then we use the transformation \eqref{upm} to obtain the solution 
$$(\eta,  v) \in C\left( [0, T]; H^{s_1}(\R) \times  H^{s_1+1/2}(\R)\right)$$ of the original system
\eqref{wt1}--\eqref{data1}. Similarly, we use the transformation \eqref{upm2d} 
to obtain the solution 
$$(\eta,  \mathbf v) \in C\left( [0, T]; H^{s_2}\left( \R^2 \right) \times  
\left( H^{{s_2}+1/2} \left( \R^2 \right) \right)^2 \right)$$ of the original system
\eqref{wt2d}--\eqref{data2d}. Thus we obtain also Theorems \ref{simple_theorem} and \ref{simple_theorem2d}.

\subsection{Proof of Theorem \ref{mainthm}}

Here we assume $d=1$. For $s = 0$ one can easily extend the local result globally
making use of Lemma \ref{global_lemma_s_0}.
With the global bound of the lemma we can reapply
the local result, Theorem \ref{simple_theorem}, as many times
as we want, thus proving Theorem \ref{mainthm}
with $\delta = \epsilon_0 / 2$ for $s = 0$.
The proof for positive $s$ is done iteratively.
In other words, assuming the result for some $s' \geq 0$
we prove for $s \in (s', s' + 1/4]$.
The argument is essentially the persistence of regularity
based on the a priori estimate lemma \ref{global_lemma_s_positive},
where we use the notation
\(
	\lVert (\eta, v) \rVert _{X^s}
\)
defined by \eqref{Xs_norm_definition}.
Indeed, the first estimate in Lemma \ref{global_lemma_s_positive}
allows to reapply the
local result and extend the solution to
any time interval if $0 < s < 1/2$.
In the case $s \geqslant 1/2$ extension is carried out
iteratively making use of
the second inequality in Lemma \ref{global_lemma_s_positive}.


\vskip 0.05in
\noindent
{\bf Acknowledgments.}
{
The authors are grateful to Didier Pilod for
fruitful discussions and numerous helpful comments
on a preliminary version of this work.
They also thank the anonymous referee for helpful comments and suggestions.
We acknowledge support from the
Norwegian Research Council and the Trond Mohn Stiftelse (project \emph{Pure Mathematics in Norway}).
}

\bibliographystyle{amsplain}
\bibliography{bibliography}

\end{document}